 \documentclass[12pt]{amsart}
 \usepackage{amssymb}
 \usepackage{amsmath}
\usepackage{amsfonts}


 \newtheorem{thm}{Theorem}[section]
 \newtheorem{lemma}[thm]{Lemma}
 \newtheorem{prop}[thm]{Proposition}
\newtheorem{cor}[thm]{Corollary}

\theoremstyle{definition}
\newtheorem{defn}[thm]{Definition}
\newtheorem{question}{Question}

\theoremstyle{remark}
\newtheorem{remark}[thm]{Remark}

\newtheorem*{ex}{Example}
\newtheorem{example}{Example}

\newenvironment{acknowledgement}[1][Acknowledgement]
 {\begin{trivlist} \item[\hskip \labelsep {\bfseries
#1}]}{\end{trivlist}}

\newcommand{\B}{\mathfrak{B}}

\newcommand{\Ff}{\mathbb{F}}
\newcommand{\Ss}{\mathcal{S}}
\newcommand{\R}{\mathcal{R}}

 \newcommand{\C}{\mathbb{C}}

 \newcommand{\N}{\mathbb{N}}
 \newcommand{\Zz}{\mathbb{Z}}
 \newcommand{\Q}{\mathbb{Q}}

\newcommand{\Oo}{\mathcal{O}}

 \newcommand{\al}{\alpha}
 \newcommand{\la}{\langle}
 \newcommand{\ra}{\rangle}
\newcommand{\de}{\delta}

 \newcommand{\ep}{\varepsilon}
 
 \newcommand{\LO}{\mathcal{LO}}

\begin{document}

J. Funct. Anal. to appear
 \title[semigroups]{fixed point properties for semigroups of nonlinear mappings and amenability}
 
\author[A. T.-M. Lau]{Anthony T.-M. Lau \dag}
\address{\dag \; Department of Mathematical and Statistical sciences\\
           University of Alberta\\
           Edmonton, Alberta\\
           T6G 2G1 Canada}
\email{tlau@math.ualberta.ca}
\thanks{\dag  \; Supported by NSERC Grant MS100}

\author[Y. Zhang]{ Yong Zhang \ddag}
\address{\ddag \; Department of Mathematics\\
           University of Manitoba\\
           Winnipeg, Manitoba\\
           R3T 2N2 Canada}
\email{zhangy@cc.umanitoba.ca}
\thanks{\ddag \; Supported by NSERC Grant 238949-2005}


\subjclass{Primary 46H20, 43A20, 43A10; Secondary 46H25, 16E40}

\keywords{fixed point property, nonexpansive mappings, L-embedded convex set, weakly compact convex set, left reversible semigroup, weakly almost periodic, invariant mean}

\begin{abstract}
In this paper we study fixed point properties for  semitopological semigroup of nonexpansive mappings on a bounded closed convex subset of a Banach space. We also study a Schauder fixed point property for a semitopological semigroup of continuous mappings on a compact convex subset of a separated locally convex space. Such semigroups properly include the class of extremely left amenable semitopological semigroups, the free commutative semigroup on one generator and the bicyclic semigroup $S_1 = \la a, b:\, ab = 1\ra$. 
\end{abstract}

\maketitle

\section{introduction}\label{Intro}

Let $E$ be a Banach space and $C$ a nonempty bounded closed convex subset of $E$. The set $C$ has the fixed point property (abbreviated fpp) if every nonexpansive mapping $T$: $C\to C$ (that is $\|T(x)-T(y)\|\leq \|x-y\|$ for all $x,y\in C$) has a fixed point. The space $E$ has the fpp (respectively weak fpp) if every bounded closed (respectively weakly compact) convex set of $E$ has the fpp. The weak* fpp is defined similarly when $E$ is a dual Banach space. A result of Browder \cite{Browder} asserts that if a Banach space $E$ is uniformly convex, then $E$ has the fpp. Kirk extended this result by showing that if $C$ is a weakly compact convex subset of $E$ with normal structure (whose definition will be given below), then $C$ has the fpp. As shown by Alspach \cite{A} (see also \cite[Example~11.2]{G-K}), the Banach space $L^1[0,1]$ does not have the weak fpp (and hence not the fpp). In fact he exhibited a weakly compact convex subset $C$ of $L^1[0,1]$ and an isometry on $C$ without a fixed point.
It is well-known that $\ell^1(\Zz)$ has the weak* fpp but fails the fpp (see \cite{Lim 80}). However, in a recent remarkable paper of Lin \cite{Lin}, it was shown that $\ell^1(\Zz)$ can be renormed to have the fpp. This answered negatively a long standing question of whether every Banach space with the fpp was necessarily reflexive.

In \cite{Jap 02} Jap\'on Pineda showed that the weak fpp and the weak* fpp are equivalent for L-embedded Banach spaces which are duals of M-embedded spaces. Furthermore, Benavides, Jap\'on Pineda and Prus \cite{B-J-M} characterized weak compactness of non-empty closed convex bounded subset of an L-embedded Banach space in terms of the generic fixed point property for nonexpansive affine mappings. Also in the recent paper \cite{BGM} Bader, Gelander and Monod showed that for a nonempty bounded subset $B$ of an L-embedded Banach space $E$, there is a point in $E$ which is fixed by every linear isometry $T$ on $E$ that preserves $B$ (that is $T(B) = B$). 

Let $E$ be a separable locally convex space whose topology is determined by a family $Q$ of seminorms on $E$. We often write $(E,Q)$ to highlight the topology $Q$. A subset $C$ of $E$ is said to have ($Q$-)normal structure if, for each $Q$-bounded  subset $H$ of $C$ that contains more than one point, there is $x_0\in co H$ and $p\in Q$ such that 
\[ \sup\{p(x-x_0) :\; x\in H\} < \sup\{p(x-y) :\; x, y\in H\}, \] 
where $coH =co(H)$ denotes the convex hull of $H$. Here by $Q$-boundedness of $H$ we mean for each $p \in Q$ there is $d>0$ such that $p(x) \leq d$ for all $x\in H$. Every $Q$-compact subset has normal structure. In a uniformly convex Banach space (for example, every $L^p$ space with $p>1$) a bounded convex set always has normal structure.

Let $S$ be a semitopological semigroup, that is, a semigroup with a Hausdorff topology such that for each $t\in S$, the mapping $s\mapsto t\cdot s$ and $s\mapsto s\cdot t$ from $S$ into $S$ are continuous. The semigroup $S$ is \emph{left reversible} if any two closed right ideals of $S$ have non-void intersection, that is, $\overline{sS}\cap \overline{tS} \neq \emptyset$ for any $s,t\in S$. Here $\overline{A}$ denotes the closure of a subset A in a topological space.
Let $C$ be a subset of a locally convex topological vector space $(E,Q)$. We say that $\Ss = \{T_s:\; s\in S\}$ is a \emph{representation} of $S$ on $C$ if for each $s\in S$, $T_s$ is a mapping from $C$ into $C$ and $T_{st}(x) = T_s(T_tx)$ ($s,t\in S$, $x\in C$). Sometimes we simply use $sx$ to denote $T_s(x)$ if there is no confusion in the context. The representation is called continuous, weakly continuous or weak* continuous if each $T_s$ ($s\in S$) is $Q$-$Q$ continuous, weak-weak continuous or weak*-weak* continuous respectively. The representation is called \emph{separately} or, respectively, \emph{jointly continuous} if the mapping $(s,x)\mapsto T_s(x)$ from $S\times C$ to $C$ is separately or jointly continuous. The representation is called \emph{affine} if $C$ is convex and each $T_s$ ($s\in S$) is an affine mapping, that is,  $T_s(ax+by) = aT_sx + bT_sy$ for all constants $a,b \geq 0$ with $a +b = 1$, $s\in S$ and $x, y\in C$. We say that a representation $\Ss$ is $Q$-\emph{nonexpansive} if  $p(T_sx - T_sy) \leq p(x-y)$ for all $s\in S$, all $p \in Q$ and all $x,y \in C$. We say that $x\in C$ is a \emph{common fixed point} for (the representation of) $S$ if $T_s(x) = x$ for all $s\in S$. The set of all common fixed points for $S$ in $C$ is called the \emph{fixed point set} of $S$ (in $C$) and is denoted by $F(S)$. 
It is well-known that if $S$ is a left reversible semitopological semigroup and if $\Ss$ is a $Q$-nonexpansive representation of $S$ on $C\subset (E,Q)$,
then each of the following conditions implies that $F(S)$ is not empty:
\begin{enumerate}
\item The set $C$ is compact and convex in $(E,Q)$ (\cite{Mitch 70});
\item The set $C$ is weakly compact and convex and has normal structure (\cite{Lim 74});
\item The semigroup $S$ is discrete, the set $C$ is weakly compact and convex, and the representation $\Ss$ is weakly continuous (\cite{Hsu});
\item The set $C$ is weak* compact convex subset of $\ell^1$ (\cite{Lim 80}).
\end{enumerate}

It is also well-known that if $AP(S)$, the space of continuous almost periodic functions on $S$, has a left invariant mean, $C$ is compact, convex and $\Ss$ is a nonexpansive representation of $S$ on $C$, then $F(S)\neq \emptyset$ (see \cite{Lau73,L-Z}).

One of the purposes of this paper is to study fixed point properties for a semigroup of nonexpansive mappings on a bounded closed convex subset $C$ of a Banach space. In Section~\ref{L-embedded} we shall introduce a notion of $L$-embeddedness for subsets of a Banach space and prove that (Theorem~\ref{discrete FL}) if $S$ is left reversible, $C$ is L-embedded and $C$ contains a non-empty bounded subset $B$ such that each $T_s$ ($s\in S$) ``preserves'' $B$, that is, $T_s(B) =B$ for all $s\in S$ (which is the case when $S$ is a group and each $T_s$ maps $B$ into $B$), then $C$ contains a common fixed point for $S$. We also study in Section~\ref{Schauder} a Schauder fixed point property for a semitopological semigroup $S$, that is, every continuous representation of $S$ on a compact convex set $C$ of a separated locally convex topological vector space has a common fixed point. Such semigroups properly include the class of extremely left amenable semitopological semigroups. A semitopological semigroup $S$ is \emph{extremely left amenable} if $LUC(S)$, the space of bounded complex-valued left uniformly continuous functions on $S$, has a multiplicative left invariant mean. Extremely left amenable semigroups were studied earlier by Granirer \cite{Gran_ELA} and  Mitchell \cite{Mitch_ELA} (see also \cite{G-P}).

In Section~\ref{right zero}, we shall study the extension of an action of a semitopological semigroup $S$ on a compact Hausdorff space to $S^r$, where $S^r=S\cup \{r\}$ is obtained from $S$ by adjoining a right zero $r$ to $S$. Note that $S^r$ is in general not a semigroup since $rs$ is usually not defined for $s\in S$. This is then used to derive various fixed point properties of $S$, related to left amenability of various function spaces on $S$, on a compact convex subset of a locally convex space. Furthermore, this extension technique allows us to drop the separability condition in an earlier result of the authors in \cite{L-Z}. In Section~\ref{open questions} we list some open questions related to our work.

\section{Preliminaries}\label{Prel}
All topological spaces considered in this paper are Hausdorff. If $E$ is a locally convex space, we denote the dual space of $E$ by $E^*$.

Let $S$ be a semitopological semigroup. Let $\ell^\infty(S)$ be the $C^*$-algebra of bounded complex-valued functions on $S$ with the supremum norm and pointwise multiplication. For each $s\in S$ and $f\in \ell^\infty$, denote by $\ell_sf$ and $r_sf$ the left and right translates of $f$ by $s$ respectively, that is, $(\ell_sf)(t) = f(st)$ and $(r_sf)(t) = f(ts)$ ($t\in S$). Let $X$ be a closed subspace of $\ell^\infty(S)$ containing the constant functions and being invariant under translations. Then a linear functional $m\in X^*$ is called a mean if $\|m\| = m(1) =1$; $m$ is called a left invariant mean, denoted by LIM, if $m(\ell_sf) = m(f)$ for all $s\in S$, $f\in X$. If $X$ is a subalgebra of $\ell^\infty(S)$ then $m$ is multiplicative if $m(fg)=m(f)m(g)$ for all $f,g\in X$. Let $C_b(S)$ be the space of all bounded continuous complex-valued functions on $S$. Let $LUC(S)$ be the space of left uniformly continuous functions on $S$, that is, all $f\in C_b(S)$ such that the mappings $s\mapsto \ell_s(f)$ from $S$ into $C_b(S)$ are continuous when $C_b(S)$ has the sup norm topology. Then $LUC(S)$ is a $C^*$-subalgebra of $C_b(S)$ invariant under translations and contains the constant functions. The semigroup $S$ is called \emph{left amenable} (respectively extremely left amenable) if $LUC(S)$ has a LIM (respectively a multiplicative LIM). Left amenable semitopological semigroups include all commutative semigroups, all compact groups and all solvable groups. But the free group (or semigroup) on two generators is not left amenable. The theory concerning amenability of semigroups may be found in monographs \cite{Pat} and \cite{Pier}.

Let $AP(S)$ be the space of all $f\in C_b(S)$ such that $\LO(f) = \{\ell_sf:\, s\in S\}$ is relatively compact in the norm topology of $C_b(S)$, and let $WAP(S)$ be the space of all $f\in C_b(S)$ such that $\LO(f)$ is relatively compact in the weak topology of $C_b(S)$. Functions in $AP(S)$ (respectively $WAP(S)$) are called \emph{almost periodic} (respectively \emph{weakly almost periodic}) functions. In general, the following inclusions hold:
\[ AP(S) \subseteq LUC(S) \subseteq C_b(S) \text{ and }  AP(S) \subseteq WAP(S) \subseteq C_b(S). \]
If $S$ is discrete then
\[ AP(S) \subseteq WAP(S)\subseteq LUC(S) = \ell^\infty(S). \]
If $S$ is compact then 
\[ AP(S) = LUC(S)\subseteq WAP(S) = C_b(S). \]
If $S$ is a compact topological semigroup, that is, the multiplication is jointly continuous, then
\[ AP(S) = WAP(S)= LUC(S) = C_b(S). \]
All inclusions indicated in the above diagrams may be proper (see \cite{B-J-M} for details).

A discrete semigroup $S$ is left reversible if $sS\cap tS \neq \emptyset$ for all $s,t\in S$. If $S$ is discrete and left amenable then $S$ is left reversible. However, for general semitopological semigroup $S$, it needs not be left reversible even when $C_b(S)$ has a LIM unless $S$ is normal (see \cite{HL}).

When $S$ is a discrete semigroup the following implication relations are known.
 \begin{equation*}
\begin{matrix}
S \text{ is left amenable}\\
\Downarrow \quad \mathrel{\not \Uparrow}\\
S \text{ is left reversible}\\
\Downarrow \quad \mathrel{\not \Uparrow}\\
WAP(S) \text{ has LIM}\\
\Downarrow \quad \mathrel{\not \Uparrow}\\
AP(S) \text{ has LIM}\\
\end{matrix} 
\end{equation*}
The implication ``$S$ is left amenable $\Rightarrow$ $AP(S)$ has a LIM'' for any semitopological semigroup was established in \cite{Lau73}. During the 1984 Richmond, Virginia, conference on analysis on semigroups, Mitchell gave two examples to show that for a discrete semigroup $S$, $AP(S)$ has a LIM $\not \Rightarrow$ $S$ is left reversible (see \cite{Lau90}). The implication  ``$S$ is left reversible $\Rightarrow$ $WAP(S)$ has a LIM'' for discrete semigroups was proved by Hsu \cite{Hsu}. The facts that $WAP(S)$ has a LIM does not imply that $S$ is left reversible, and that $AP(S)$ has a LIM does not imply that $WAP(S)$ has a LIM were established by the authors recently in \cite{L-Z}.

\section{fixed point properties for L-embedded sets}\label{L-embedded}

A Banach space $E$ is L-embedded if the image of $E$ under the canonical embedding into $E^{**}$, still denoted by $E$, is an $\ell_1$ summand in $E^{**}$, that is, if there is a subspace $E_s$ of $E^{**}$ such that $E^{**} = E\oplus_1 E_s$, where $E^{**}$ is the bidual space of $E$. The Banach space $X$ is M-embedded if $X$ is an M-ideal in its bidual $X^{**}$, that is, if $X^\bot =\{\phi\in X^{***}: \phi(x) =0 \text{ for all } x\in X\}$ is an $\ell_1$-summand in $X^{***}$ (\cite{HWW}). In fact, $X$ is M-embedded if and only if $X^{***}=X^* \oplus_1 X^\bot$ (\cite[Remark~1.13]{HWW}).  The class of L-embedded Banach spaces includes all $L^1(\Sigma,\mu)$ (the space of all absolutely integrable functions on a measure space $(\Sigma,\mu)$), preduals of von Neumann algebras and the Hardy space $H_1$. Typical example of an M-embedded Banach space is $c_0(D)$ for a discrete space $D$. In fact, $c_0(D)^* = \ell^1(D)$ and $\ell^1(D)^* =\ell^\infty(D) = C(\beta D)$, where $\beta D$ is the Stone-\v Cech compactification of $D$. Let $M(\beta D)$ be the Banach space of all regular Borel measures on $\beta D$. Then 
\[ c_0(D)^{***} = C(\beta D)^* = M(\beta D) = \ell^1(D) \oplus_1 M(\beta D \setminus D). \]
It is evident that $M(\beta D \setminus D) = c_0(D)^\bot$. Thus $c_0(D)^{***} = c_0(D)^* \oplus_1c_0(D)^\bot$.
 It is also well-known that $K(H)$, the Banach space of all compact operators on a Hilbert space $H$, is M-embedded. More classical M-embedded Banach spaces may be seen in \cite[Example~1.4]{HWW}.  The dual space of an M-embedded Banach space is L-embedded. We refer to \cite{HWW} for more details of the theory concerning L-embedded and M-embedded Banach spaces. 

\begin{defn}
Let $C$ be a nonempty subset of a Banach space $E$. Denote by $\overline{C}^{\text{wk*}}$ the closure of $C$ in $E^{**}$ in the weak* topology of $E^{**}$. We call $C$ \emph{L-embedded} if there is a subspace $E_s$ of $E^{**}$ such that $E+E_s = E\oplus_1 E_s$ in $E^{**}$ and $\overline{C}^{\text{wk*}}\subset C\oplus_1 E_s$, that is, for each $u\in \overline{C}^{\text{wk*}}$ there are $c\in C$ and $\xi\in E_s$ such that $u = c+ \xi$ and $\|u\|=\|c\| + \|\xi\|$.
\end{defn}
 Trivially, every L-embedded Banach space is L-embedded as a subset of itself. From the definition, every L-embedded set $C$ in a Banach space $E$ is necessarily norm closed (even weakly closed). Indeed, we have
\[
\overline{C}^{\text{wk}}\subset \overline{C}^{\text{wk*}}\subset C\oplus_1 E_s,
\] 
where $\overline{C}^{\text{wk}}$ represents the closure of $C$  in the weak topology of $E$. If $w\in \overline{C}^{\text{wk}}$ and $w= y + w_s$ for some $y\in C$ and $w_s\in E_s$, then $w_s \in E\cap E_s$. Therefore, $w_s =0$ and $w=y\in C$. This shows that $\overline{C}^{\text{wk}} = C$, So $C$ is weakly closed. We are grateful to the referee for pointing this out.

It is readily seen that every weakly compact subset $C$ of any Banach space $E$ is L-embedded (simply take $E_s = \{0\}$ and note $\overline C^{\text{wk*}} = C$). 
It is also true that if $E$ is an L-embedded Banach space then its unit ball is L-embedded. Indeed, if $B_E$ is the unit ball of $E$ and $P$: $E^{**}\to E$ is the $\ell^1$-projection then $\overline{B_E}^{\text{wk*}}$ is the unit ball of $E^{**}$ and $P(\overline{B_E}^{\text{wk*}})=B_E$. In fact, the converse is also true. If $B_E$ is L-embedded then $\overline{B_E}^{\text{wk*}}\subset B_E\oplus_1 E_s\subset E\oplus_1 E_s$ for some subspace $E_s$ of $E^{**}$, which implies $E^{**}\subseteq E\oplus_1 E_s$ and hence $E^{**}= E\oplus_1 E_s$. Thus, a Banach space is L-embedded if and only its unit ball is L-embedded. But one cannot expect that every closed convex subset of an L-embedded Banach space is L-embedded. We will see (in the example after Theorem~\ref{FL}) that the set of all means on $\ell^\infty$ is not L-embedded as a convex closed (and even weak* compact) subset of $(\ell^\infty)^*$ although $(\ell^\infty)^*$ (as the dual space of a von Neumann algebra) is always L-embedded. For a $\sigma$-finite measure space $(\Sigma, \mu)$, it was shown in \cite[Theorem~1.1]{B-L} that a closed convex bounded set in $L^1(\Sigma, \mu)$ is L-embedded if and only if it is closed with respect to the locally in measure topology. A generalization of this result in operator algebra setting was given in \cite[Theorem~3.5]{D-D-S-T}.

As to the question whether there exist L-embedded but not weakly compact sets in a Banach space which itself is not L-embedded, we construct an example as follows to give an affirmative answer. Let $E_1$ be a Banach space which is not L-embedded and let $E_2$ be an L-embedded (non-reflexive) Banach space of infinite dimension. Suppose that $C_1$ is a weakly compact non-empty subset of $E_1$ and $C_2$ be the unit ball of $E_2$. Then it is readily seen that $C_1 + C_2$ is not weakly compact but is L-embedded in $E_1 \oplus E_2$. The latter is a Banach space which is not L-embedded.

\begin{lemma}\label{prop L-embdd}
Suppose that $C$ is a weak* 
closed subset of the dual space $X^*$ of an M-embedded Banach space $X$.
 Then $C$ is L-embedded.
\end{lemma}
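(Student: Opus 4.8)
The plan is to read off the required $\ell_1$-summand directly from the M-embeddedness of $X$ and then verify the decomposition property for $C$ via a single continuity argument. Since $X$ is M-embedded, we have $X^{***} = X^*\oplus_1 X^\bot$. Setting $E = X^*$, so that $E^{**} = X^{***}$, I would take the candidate summand to be $E_s = X^\bot$. Then $E + E_s = E\oplus_1 E_s$ is precisely this decomposition, which already records that $X^*$ is L-embedded as a Banach space. Consequently the only thing left to check is the containment $\overline{C}^{\text{wk*}}\subset C\oplus_1 X^\bot$, where $\overline{C}^{\text{wk*}}$ denotes the closure of $C$ in the $\sigma(X^{***},X^{**})$ topology of $E^{**}$.

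Let $P\colon X^{***}\to X^*$ be the $\ell_1$-projection associated with the decomposition $X^{***} = X^*\oplus_1 X^\bot$, so that $\ker P = X^\bot$. The crucial concrete observation is that $P$ is nothing but the restriction map $\Phi\mapsto \Phi|_X$ (each $\Phi\in X^{***}$ being a functional on $X^{**}$, restricted to $X\subset X^{**}$): indeed this map is the identity on $X^*$ and annihilates $X^\bot$, so by uniqueness of the $\ell_1$-decomposition it must coincide with $P$. In particular $P$ fixes $X^*$ pointwise, whence $P(C) = C$. Moreover, for any $u\in X^{***}$ one automatically has $u = P(u) + (u - P(u))$ with $u - P(u)\in X^\bot$, and the $\ell_1$-decomposition gives $\|u\| = \|P(u)\| + \|u - P(u)\|$. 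Thus the entire statement reduces to showing that $P(u)\in C$ whenever $u\in\overline{C}^{\text{wk*}}$.

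For this I would establish the key step: $P$ is continuous from $(X^{***},\sigma(X^{***},X^{**}))$ to $(X^*,\sigma(X^*,X))$. This is immediate from the identification of $P$ with restriction, since for each fixed $x\in X$ the functional $\Phi\mapsto \langle P(\Phi),x\rangle = \langle \Phi, x\rangle$ is $\sigma(X^{***},X^{**})$-continuous precisely because $x$ lies in $X\subset X^{**}$. Granting this, if $u\in\overline{C}^{\text{wk*}}$ then $P(u)$ lies in the $\sigma(X^*,X)$-closure of $P(C) = C$; as $C$ is weak* closed in $X^*$, that closure is $C$, so $P(u)\in C$. Together with the norm identity this yields $u = P(u) + (u - P(u))\in C\oplus_1 X^\bot$, completing the argument.

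I expect the only real subtlety to be the \emph{bookkeeping of two distinct weak* topologies}: the $\sigma(X^*,X)$ topology in which $C$ is assumed closed, versus the $\sigma(X^{***},X^{**})$ topology in which $\overline{C}^{\text{wk*}}$ is formed. The point that makes the proof clean rather than delicate is the identification of the $\ell_1$-projection with the restriction map to $X$; once that is recognized, the required continuity is automatic and no approximation argument is needed. The one computation worth doing carefully is confirming that this restriction map is genuinely the $\ell_1$-projection with kernel $X^\bot$, which I would justify by the uniqueness of the decomposition $X^{***} = X^*\oplus_1 X^\bot$ as noted above.
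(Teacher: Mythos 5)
Your proposal is correct and follows essentially the same route as the paper: both exploit the decomposition $X^{***}=X^*\oplus_1 X^\bot$ and the fact that the component in $X^*$ of any $u\in\overline{C}^{\text{wk*}}$ is recovered by restricting $u$ to $X\subset X^{**}$, so that weak* convergence in $X^{***}$ passes to $\sigma(X^*,X)$-convergence and weak* closedness of $C$ finishes the argument. The paper phrases this with a net $(u_\al)\subset C$ converging to $u$ and the computation $u_\al(x)\to u(x)=x^*(x)$ for $x\in X$, whereas you package the same computation as weak*-to-weak* continuity of the $\ell_1$-projection; the content is identical.
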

\begin{proof}
Since $X^{***} = X^*\oplus_1 X^\bot$, for any $u\in\overline{C}^{\text{wk*}}$, there are $x^*\in X^*$ and $\phi\in X^\bot$ such that $u=x^*\oplus_1 \phi$. Let $(u_\al)\subset C$ be a net that converges weak* in $X^{***}$ to $u$. Then for any  $x\in X$ we have  
\[  \lim_\al u_\al(x) = u(x) = x^*(x) + \phi(x) = x^*(x). \]
So $(u_\al)$ converges weak* to $x^*$ in $X^*$. Since $C$ is weak* closed in $X^*$, $x^* \in C$.
This shows that $C$ is L-embedded.
\end{proof}

Let $C$ and $B$ be two nonempty subsets of a Banach space $E$ and $B$ is bounded. By definition the Chebyshev radius of $B$ in $C$ is
\[  r_C(B) = \inf\{r\geq 0: \exists x\in C, \;\sup_{b\in B}\|x-b\| \leq r\}. \]
Clearly we have $0\leq r_C(B) < \infty$. The Chebyshev center of $B$ in $C$ is defined to be
\[  W_C(B) =\{x\in C: \sup_{b\in B}\|x-b\| \leq r_C(B)\}.  \]
Note that, as a subset of $C$, $W_C(B)$ may be empty.

\begin{lemma}\label{Chebyshev}
Let $C$ be a nonempty L-embedded  subset of a Banach space $E$ and $B$ a nonempty bounded subset of $E$. Then the Chebyshev center $W_C(B)$ of $B$ in $C$ is nonempty and weakly compact. If $C$ is convex then so is $W_C(B)$. Moreover, if
$\Ss=\{T_s: s\in S\}$ is a norm nonexpansive representation of a semigroup $S$ on $C$ and $B\subset C$ with $B \subset T_s(B)$ for all $s\in S$, then $W_C(B)$ is also $S$-invariant.
\end{lemma}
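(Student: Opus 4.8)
The plan is to exploit the $\ell_1$-decomposition underlying L-embeddedness in order to control the sup-distance function $\phi(x) = \sup_{b\in B}\|x-b\|$ under projection onto $E$. Write $r = r_C(B)$ and fix the subspace $E_s \subset E^{**}$ from the definition, so that $E \oplus_1 E_s$ is an $\ell_1$-direct sum containing $\overline{C}^{\text{wk*}}$ and the $E$-component of every $u \in \overline{C}^{\text{wk*}}$ lies in $C$; denote this component by $Pu$. The computational heart of the argument is the identity: for $u \in \overline{C}^{\text{wk*}}$ written as $u = Pu + \xi$ with $\xi \in E_s$, and for any $b \in B \subseteq E$, the splitting $u - b = (Pu - b) + \xi$ is again an $\ell_1$-decomposition, so $\|u - b\| = \|Pu - b\| + \|\xi\|$. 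Taking the supremum over $b$ gives $\phi(Pu) \le \sup_{b\in B}\|u-b\|$; that is, projecting back into $C$ never increases the sup-distance to $B$.

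For nonemptiness I would take a minimizing net $(x_\al) \subset C$ with $\phi(x_\al) \to r$. Such a net is norm-bounded, so by Banach--Alaoglu it has a weak* cluster point $u$ in $E^{**}$, and since $\overline{C}^{\text{wk*}}$ is weak* closed we have $u \in \overline{C}^{\text{wk*}}$. Because the dual norm is weak* lower semicontinuous, each set $\{w : \|w - b\| \le r+\epsilon\}$ is weak* closed and eventually contains the net, forcing $\|u - b\| \le r$ for every $b$, hence $\sup_{b}\|u-b\| \le r$. Applying the identity, $Pu \in C$ with $\phi(Pu) \le r$; since $r$ is the infimum of $\phi$ over $C$, equality holds and $Pu \in W_C(B)$.

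For weak compactness the idea is to realize $W_C(B)$ as a weak* compact set lying inside $E$. Set $\widetilde W = \{u \in \overline{C}^{\text{wk*}} : \sup_b\|u-b\| \le r\}$; this is weak* closed (an intersection of weak* closed sets) and bounded, hence weak* compact. The key observation is that $\widetilde W$ actually sits inside $E$: if $u = Pu + \xi \in \widetilde W$, the identity gives $\|Pu - b\| + \|\xi\| \le r$ for all $b$, so $\phi(Pu) + \|\xi\| \le r$; but $Pu \in C$ forces $\phi(Pu) \ge r$, whence $\xi = 0$ and $u = Pu \in C$. Thus $\widetilde W = W_C(B)$, and since the weak* topology of $E^{**}$ restricts to the weak topology of $E$ on $E$, the set $W_C(B)$ is weakly compact.

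The remaining two assertions are routine. Convexity of $W_C(B)$ when $C$ is convex follows from the triangle inequality, since each $x \mapsto \|x-b\|$ is convex, so $\phi$ is convex and the constraint $\phi(x) \le r$ intersected with $C$ is convex. For $S$-invariance, take $x \in W_C(B)$ and any $b \in B$; the hypothesis $B \subset T_s(B)$ lets me write $b = T_s b'$ with $b' \in B$, and nonexpansiveness gives $\|T_s x - b\| = \|T_s x - T_s b'\| \le \|x - b'\| \le \phi(x) \le r$. Since $T_s x \in C$, taking the supremum over $b$ yields $T_s x \in W_C(B)$. The main obstacle is genuinely the first two parts: the whole difficulty is that L-embedded sets need not be weakly compact, so one cannot simply minimize a weakly lower semicontinuous function over a weakly compact domain. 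The L-embedded $\ell_1$-structure is precisely what allows a weak* cluster point to be pushed back into $C$ without cost, and what forces the $E_s$-component of every point of the center to vanish.
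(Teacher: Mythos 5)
Your proposal is correct and follows essentially the same route as the paper: use the $\ell_1$-decomposition to show that projecting a weak* cluster point of a minimizing sequence back into $C$ does not increase the sup-distance to $B$ (giving nonemptiness), then show the $E_s$-component of every point of the center must vanish, so the center is a weak* compact set lying in $E$ and hence weakly compact. The only cosmetic difference is that you define the enlarged center directly with radius $r_C(B)$ rather than first introducing $r_{\widetilde C}(B)$ and proving the two radii coincide, which streamlines but does not change the argument.
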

\begin{proof} 
We first show that $W_C(B) \neq \emptyset$. From the definition of the Chebyshev radius $r_C(B)$, for each $n>0$, there is $x_n\in C$ such that
\[  \sup_{b\in B}\|x_n-b\| \leq r_C(B) + \frac{1}{n}.  \]
Let $x^{**}$ be any weak* limit point of $(x_n)$ in $E^{**}$. By L-embeddedness of $C$ there exist $c\in C$ and $\xi \in E_s$ such that $x^{**}=c+\xi$, where $E_s$ is a subspace of $E^{**}$ such that $E + E_s = E\oplus_1 E_s$. we have 
\[ \|c-b\| \leq \|c-b\| + \|\xi\| = \|x^{**} - b\| \leq \lim_{n\to \infty} r_C(B) +\frac{1}{n} = r_C(B)  \]
for all $b\in B$. This shows that $c\in W_C(B)$. 

To show that $W_C(B)$ is weakly compact we consider $\widetilde{C} = \overline{C}^{\text{wk*}}$, the weak* closure of $C$ in $E^{**}$. $\widetilde C$ is a nonempty weak* closed subset of $E^{**}$ and $\widetilde C \subset C\oplus_1 E_s$. Regard $B$ as a subset of $E^{**}$ (after canonical embedding $E$ into $E^{**}$). Using the same method one may see that the Chebyshev center $W_{\widetilde C}(B)$ of $B$ in $\widetilde C$ is nonempty. We show that $W_{\widetilde C}(B)=W_C(B)$. 
Take $x\in W_{\widetilde C}(B)$. There is $c\in C$ and $\xi \in E_s$ such that $x=c+\xi$. Then for each $b\in B$ we have $\|x-b\| = \|c-b\|+\|\xi\|$. So
\[ r_{\widetilde C}(B) \geq \sup_{b\in B}\|x-b\| = \sup_{b\in B}\|c-b\| + \|\xi\| \geq r_C(B) + \|\xi\|. \]
On the other hand, from the definition of the Chebyshev radius, 
\[ r_{\widetilde C}(B) \leq r_C(B).  \]
Therefore, it must be true that $\xi = 0$ and $r_{\widetilde C}(B)= r_C(B)$. Then it follows that $W_{\widetilde C}(B)=W_C(B)$.  

It is obvious that $W_{\widetilde C}(B)$ is weak* closed and bounded. So it is weak* compact. Since on $W_C(B)$ the weak* topology of $E^{**}$ coincides with the weak topology of $E$. We conclude that $W_C(B)$ is indeed weakly compact.

It is straightforward that, if $C$ is convex, $W_C(B)$ is convex.

Now suppose that $\Ss=\{T_s: s\in S\}$ is a norm nonexpansive representation of a semigroup $S$ on $C$ and $B\subset C$ with $B\subset T_s(B)$ for all $s\in S$. Then for each $s\in S$ and each $b\in B$, there is $b_s\in B$ such that $b=T_s(b_s)$. So
\[  \|T_s(x) - b\| = \|T_s(x) - T_s(b_s)\| \leq \|x-b_s\|\leq r_C(B) \quad (x\in W_C(B)).  \]
This leads to $T_s(x)\in W_C(B)$ for $x\in W_C(B)$. Therefore $W_C(B)$ is $S$-invariant.
\end{proof}

The hypothesis $B\subset T_s(B)$ ($s\in S$) in the above lemma is crucial for $W_C(B)$ being $S$-invariant. The condition is automatically satisfied by choose $B = T_e(C)$ if $S$ is a group and $C$ is bounded, where $e$ is the unit of $S$. The existence of such a bounded set $B$ may be also be automatically ensured in some general settings, which we are going to present.

\begin{lemma}\label{T(B)=B}
Let $S$ be a left reversible semitopological semigroup and Let $\Ss =\{T_s: s\in S\}$ be a representation of $S$ as separately continuous self mappings on a compact Hausdorff space $(K,\tau)$. Then there is a nonempty $\tau$-compact subset $B$ of $K$ such that $B\subset T_s(B)$ for all $s\in S$. If in addition that the representation is jointly continuous, then the set $B$ also satisfy $T_s(B) =B$ for all $s\in S$.
\end{lemma}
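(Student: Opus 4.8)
The plan is to take $B$ to be the intersection of all images of $K$ under the representation, namely $B = \bigcap_{s\in S} T_s(K)$, and to deduce both assertions from a single monotonicity principle combined with $\tau$-compactness. Since each $T_s$ is $\tau$-continuous and $K$ is $\tau$-compact, every image $T_s(K)$ is a nonempty $\tau$-compact (hence closed) subset of $K$, so $B$ is automatically closed; the whole content is to show that $B$ is nonempty and has the stated invariance.

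First I would record the key monotonicity lemma: if $u\in\overline{vS}$ then $T_u(K)\subseteq T_v(K)$. Writing $u=\lim_\alpha vb_\alpha$ with $b_\alpha\in S$, separate continuity of the representation gives $T_u(x)=\lim_\alpha T_v(T_{b_\alpha}(x))$ for each $x\in K$, and closedness of $T_v(K)$ places the limit in $T_v(K)$. Separate continuity of the multiplication on $S$ likewise shows $u\in\overline{vS}$ forces $\overline{uS}\subseteq\overline{vS}$, so iterating left reversibility yields that the closed right ideals $\{\overline{sS}\}$ have the finite intersection property. Consequently, for any $s_1,\dots,s_n$ there is $w\in\bigcap_i\overline{s_iS}$, and the monotonicity lemma gives $T_w(K)\subseteq\bigcap_i T_{s_i}(K)$. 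Thus $\{T_s(K)\}$ has the finite intersection property, and $\tau$-compactness of $K$ yields $B\neq\emptyset$.

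For the inclusion $B\subseteq T_s(B)$, I would fix $x\in B$ and $s\in S$ and consider the closed sets $Q_t=T_s^{-1}(x)\cap T_t(K)$ for $t\in S$. Given $t_1,\dots,t_n$, choosing $w\in\bigcap_i\overline{t_iS}$ as above and using $x\in B\subseteq T_{sw}(K)=T_s(T_w(K))$ produces a preimage $y\in T_w(K)\subseteq\bigcap_i T_{t_i}(K)$ of $x$ under $T_s$, so $\{Q_t\}$ has the finite intersection property. Any point of $\bigcap_{t\in S}Q_t$ is then an element $y\in B$ with $T_s(y)=x$, proving $x\in T_s(B)$.

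Finally, under joint continuity I would establish the reverse inclusion $T_s(B)\subseteq B$. For $y\in B$ and $v=st\in sS$ we have $T_s(y)\in T_s(T_t(K))=T_v(K)$; the decisive step is to upgrade this to all $v\in\overline{sS}$. Writing $v=\lim_\alpha sa_\alpha$ and $T_s(y)=T_{sa_\alpha}(x_\alpha)$ with $x_\alpha\in K$, a subnet of $(x_\alpha)$ converges to some $x\in K$, and joint continuity forces $T_s(y)=T_v(x)\in T_v(K)$. Then for arbitrary $u\in S$, left reversibility supplies $w\in\overline{sS}\cap\overline{uS}$, and $T_s(y)\in T_w(K)\subseteq T_u(K)$ by monotonicity; since $u$ is arbitrary, $T_s(y)\in B$, giving $T_s(B)=B$. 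The main obstacle throughout is the passage from the algebraic ideal $sS$ to its closure $\overline{sS}$, which left reversibility alone controls: every containment must be transported across limits, and it is exactly here that separate continuity suffices for $B\subseteq T_s(B)$ while joint continuity becomes indispensable for the reverse inclusion, since one must keep the moving preimages $x_\alpha$ under control while both the index $sa_\alpha$ and the point vary simultaneously.
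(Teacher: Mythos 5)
Your proof is correct and follows essentially the same route as the paper's: a finite-intersection-property argument driven by left reversibility to get $B\neq\emptyset$ and $B\subseteq T_s(B)$, and a convergent subnet of preimages together with joint continuity for the reverse inclusion. The only difference is cosmetic — you intersect the images $T_s(K)$ of the whole space, whereas the paper intersects $T_s(\overline{Sa}^{\tau})$ for a fixed $a\in K$ (which yields a smaller $B$ inside a single orbit closure, used in the remark that follows the lemma, but is not needed for the statement itself).
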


\begin{proof}
Fix an $a\in K$. Denote $W = \overline{Sa}^{\tau} = \tau\text{-cl}\{T_s(a): s\in S\}$. For each $s\in S$ let $W_s = T_s(W)$. 
Then $W$ is $\tau$-compact. So is each $W_s$ and $W_s = \tau\text{-cl}(T_s(Sa))$. We denote the closed right ideal $\overline{sS}$ of $S$ by $\hat s$. Since $S$ is left reversible, $\Sigma=\{\hat s: s\in S\}$ is naturally equipped with a partial order: $\hat s \geq \hat t$ if $\overline{sS} \subseteq \overline{tS}$. It is readily seen that $W_s =\overline{\hat s a}^{\tau}$, and so  $\{W_s: s\in S\} = \{\overline{\hat s a}^{\tau}: \hat s\in \Sigma\}$ is a decreasing net of closed subsets of $K$ in the sense that $W_s \subseteq W_t$ if $\hat s \geq \hat t$. This implies that $B = \bigcap_{s\in S}W_s \neq \emptyset$ and is $\tau$-compact due to the finite intersection property. 

To show $B\subset T_t(B)$ for each $t\in S$, we first show that for every finite subset $\al$ of $S$, $B\subset T_t(\bigcap_{s\in \al}W_s)$. From the left reversibility of $S$, there exists $s_0\in \cap_{s\in \al}\overline{sS}$. For this $s_0$ we have $W_{s_0} \subset \bigcap_{s\in \al}W_s$. Then 
\[ W_{ts_0}=T_t(W_{s_0}) \subset T_t(\bigcap_{s\in \al}W_s). \]
But $B \subset W_{ts_0}$. We then have $B\subset T_t(\bigcap_{s\in \al}W_s)$. So we have shown that
\[ T_t^{-1}(y) \cap (\bigcap_{s\in \al}W_s) \neq \emptyset  \]
for each $y\in B$ and each nontrivial finite set $\al \subset S$. By compactness
\[  T_t^{-1}(y)\cap B = T_t^{-1}(y) \cap (\bigcap_{s\in S}W_s) \neq \emptyset. \]
So there is $z\in B$ such that $T_t(z) = y$. This shows that $B\subset T_t(B)$ ($t\in S$). So the first assertion of the Lemma is true.

To show the second assertion we assume that the representation $\Ss$ is jointly continuous. We obviously have, for $t\in S$,
\[  T_t(B) \subset \bigcap_{s\in S} T_t(W_s) = \bigcap_{s\in S} W_{ts}.  \]
For each $\varsigma\in S$, from the left reversibility, we may take $\eta \in \hat\varsigma \cap \hat t$. It is evident that $\hat \eta \subset \hat\varsigma \cap \hat t$. Let $(t_\al)\subset S$ be a net such that $\eta = \lim_\al t t_\al$. We show $\bigcap_\al W_{tt_\al} \subset W_\eta$. Let $x\in \bigcap_\al W_{tt_\al}$. Then, for each $\al$, there is $b_\al \in W$ such that $x = T_{tt_\al} b_\al$. Passing to a subnet if necessary, we may assume $\tau$-$\lim_\al b_\al = b \in W$. By the joint continuity, $x = T_{tt_\al} b_\al \overset{\tau}{\to} T_\eta b \in W_\eta$. This implies $\bigcap_\al W_{tt_\al} \subset W_\eta$. Since $\bigcap_{s\in S} W_{ts} \subset \bigcap_\al W_{tt_\al}$ and  $W_\eta \subset W_\varsigma$, we then obtain $\bigcap_{s\in S} W_{ts} \subset  W_\varsigma$ for all $\varsigma\in S$. Thus
\[  T_t(B) \subset \bigcap_{s\in S} W_s = B.  \]
This is true for each $t\in S$. Combine this with the first assertion we have $T_t(B) =B$ for all $t\in S$.

\end{proof}

\begin{remark}
Under the hypothesis of Lemma~\ref{T(B)=B}, assuming the representation is jointly continuous, we may consider the collection 
\[ \B = \{B\subset K:\, B\neq \emptyset,\, B\text{ is $\tau$-compact and }T_s(B) = B \text{ for all }s\in S\}  \]
with the inclusion partial order. If $(B_\al)$ is a decreasing subchain of $\B$, then $H = \cap B_\al$ is a nonempty $\tau$-compact subset of $K$ (due to the finite intersection property of the subchain). Clearly $T_s(H) \subset  H$
 for all $s\in S$. Apply Lemma~\ref{T(B)=B} with $K$ replaced by $H$, we obtain a $B\in \B$ such that $B \subseteq H \subseteq B_\al$ for all $\al$.
Using Zorn's Lemma, we obtain a minimal element $B_{\min}$ of $\B$ which is a nonempty $\tau$-compact subset of $K$ such that $T_s(B_{\min}) =B_{\min}$ for all $s\in S$. For each $b\in B_{\min}$, applying  Lemma~\ref{T(B)=B} to $\overline{Sb}^{\tau}$, one is assured that there is $B\in \B$ such that  $B\subset \overline{Sb}^{\tau}$. By the minimality of $B_{\min}$ we must have $B_{\min}= \overline{Sb}^{\tau}$  for each $b\in B$. We will use this fact later in this section to obtain a fixed point property for left reversible semigroups.
\end{remark}

If $\tau$ is the weak-topology of a Banach space or is the weak*-topology of a dual Banach space then, as a consequence of the above lemma, we may have a bounded set $B$ such that $T_s(B) =B$ for all $s\in S$.

\begin{cor}\label{wk cpt B}
Suppose that $K$ is a weakly compact nonempty subset of a Banach space. Let $S$ be a left reversible semitopological semigroup that acts on $K$ as jointly weakly continuous self mappings. Then there is a nonempty weakly compact subset $B$ of $K$ such that $T_s(B) =B$ for all $s\in S$.
\end{cor}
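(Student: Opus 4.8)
The plan is to deduce this corollary directly from Lemma~\ref{T(B)=B} by taking the topology $\tau$ to be the weak topology of the Banach space restricted to $K$. First I would observe that since $K$ is a weakly compact subset of a Banach space, $(K,\tau)$ with $\tau$ the relative weak topology is a compact Hausdorff space; the Hausdorff property is inherited from the weak topology of the ambient Banach space, which is Hausdorff because the dual separates points. Next, the hypothesis that $S$ acts on $K$ as jointly weakly continuous self mappings says precisely that $\Ss=\{T_s:s\in S\}$ is a representation of $S$ by self maps of $K$ that is jointly continuous when $K$ carries the topology $\tau$. Thus all the hypotheses of Lemma~\ref{T(B)=B} are met, in its stronger (jointly continuous) form.

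Applying the second assertion of Lemma~\ref{T(B)=B}, I obtain a nonempty $\tau$-compact subset $B$ of $K$ with $T_s(B)=B$ for all $s\in S$. The only remaining point is to translate ``$\tau$-compact'' back into ``weakly compact.'' Since $\tau$ is by construction the relative weak topology on $K$, a subset of $K$ is $\tau$-compact if and only if it is compact in the weak topology of the Banach space, i.e. weakly compact. Hence $B$ is a nonempty weakly compact subset of $K$ satisfying $T_s(B)=B$ for all $s\in S$, which is exactly the assertion of the corollary.

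I do not expect any genuine obstacle here, since this is a specialization of Lemma~\ref{T(B)=B} to a concrete topology rather than a new argument; the only thing to be careful about is confirming that the relative weak topology on the weakly compact set $K$ is both Hausdorff and compact, so that Lemma~\ref{T(B)=B} applies verbatim, and that $\tau$-compactness of a subset of $K$ coincides with weak compactness in the ambient space. Both are immediate, the latter because compactness of a subset of $K$ in the relative topology is intrinsic and does not depend on whether one views the subset as sitting in $K$ or in the whole Banach space.
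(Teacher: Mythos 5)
Your proposal is correct and is exactly the derivation the paper intends: Corollaries \ref{wk cpt B} and \ref{wk* cpt B} are stated immediately after the remark that Lemma~\ref{T(B)=B} applies when $\tau$ is the weak (or weak*) topology, so the specialization you describe is the paper's own (unwritten) proof. The points you flag for care --- Hausdorffness of the relative weak topology and the intrinsic nature of compactness of a subset --- are indeed the only things to check, and both hold as you say.
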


\begin{cor}\label{wk* cpt B}
Suppose that $C$ is a weak* compact nonempty subset of the dual space $E^*$ of a Banach space $E$. Let $S$ be a left reversible semitopological semigroup and $\Ss =\{T_s: s\in S\}$ be a representation of $S$ as self mappings on $C$ such that the mapping $(s,x) \mapsto T_s(x)$: $S\times C \to C$ is jointly continuous when $C$ is endowed with the weak* topology of $E^*$. Then there is a weak* compact subset $B$ in $C$ such that $T_s(B) = B$ for all $s\in S$.
\end{cor}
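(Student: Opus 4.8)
The plan is to obtain Corollary~\ref{wk* cpt B} as a direct specialization of Lemma~\ref{T(B)=B}, taking for the compact Hausdorff space $(K,\tau)$ the set $C$ equipped with the relative weak* topology of $E^*$. First I would record that this is a legitimate choice: the weak* topology on $E^*$ is Hausdorff, and since $C$ is assumed weak* compact, $(C,\text{weak*})$ is a compact Hausdorff space. The maps $T_s$ are by hypothesis self mappings of $C$, so $\Ss = \{T_s: s\in S\}$ is a representation of $S$ on this compact space, and $S$ is left reversible as required.

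Next I would check that the continuity hypotheses of the lemma are met. The corollary assumes that $(s,x)\mapsto T_s(x)$ is jointly continuous from $S\times C$ into $C$ when $C$ carries the weak* topology; this is exactly the joint continuity demanded in the second assertion of Lemma~\ref{T(B)=B}. In particular, fixing $s\in S$ and restricting, joint continuity forces each $T_s$ to be weak*-weak* continuous, so $\Ss$ is in particular a representation by separately continuous self mappings, which is all that the first assertion of the lemma needs. Thus both assertions of the lemma are applicable with $\tau$ the weak* topology.

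With the hypotheses verified, Lemma~\ref{T(B)=B} (second assertion) yields a nonempty $\tau$-compact subset $B$ of $C$ with $T_s(B)=B$ for every $s\in S$. Because $\tau$ is precisely the weak* topology of $E^*$ restricted to $C$, the $\tau$-compactness of $B$ is exactly weak* compactness, and the conclusion of the corollary follows verbatim.

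I do not expect any genuine obstacle here, since the entire content of the statement is the observation that a weak* compact set is compact Hausdorff in its own weak* topology and that the hypothesised joint continuity coincides with the joint continuity of the lemma. The only point deserving a word of care — and it is the exact analogue of what makes the companion Corollary~\ref{wk cpt B} work with the weak topology — is that one must use the weak* topology (not the norm topology) as $\tau$, so that the compactness needed in the proof of Lemma~\ref{T(B)=B} is available; the preservation property $T_s(B)=B$ is then supplied by the lemma with no further work.
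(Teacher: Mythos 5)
Your proposal is correct and is exactly the argument the paper intends: the corollary is stated as an immediate specialization of Lemma~\ref{T(B)=B}, taking $(K,\tau)$ to be $C$ with its relative weak* topology (compact Hausdorff by hypothesis) and invoking the second assertion of the lemma, whose joint-continuity hypothesis matches the one assumed in the corollary. Nothing further is needed.
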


If $S$ is discrete we particularly have the following corollaries.

\begin{cor}\label{wk dct B}
Suppose that $K$ is a weakly compact nonempty subset of a Banach space $E$. Let $S$ be a left reversible discrete semigroup and $\Ss =\{T_s: s\in S\}$ be a weakly continuous representation of $S$ on $C$. Then there is a weakly compact subset $B$ in $K$ such that $T_s(B) = B$ for all $s\in S$.
\end{cor}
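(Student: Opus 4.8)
The plan is to obtain this as an immediate special case of Corollary~\ref{wk cpt B}. The essential observation is that when $S$ carries the discrete topology, weak continuity of the representation (i.e. weak-weak continuity of each $T_s$) already forces the action to be jointly weakly continuous, so the joint-continuity hypothesis of that corollary is satisfied automatically.

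First I would note that, $K$ being weakly compact, the set $K$ equipped with the relative weak topology of $E$ is a compact Hausdorff space; thus we are in the framework of Lemma~\ref{T(B)=B} and Corollary~\ref{wk cpt B} with $\tau$ taken to be the weak topology. The only point requiring verification is the joint weak continuity of $(s,x)\mapsto T_s(x)$ from $S\times K$ to $K$. Since $S$ is discrete, the product $S\times K$ is the topological disjoint union $\bigsqcup_{s\in S}(\{s\}\times K)$, and a map out of a disjoint union is continuous precisely when its restriction to each summand is continuous. On the summand $\{s\}\times K$ the evaluation $(s,x)\mapsto T_s(x)$ coincides with $x\mapsto T_s(x)$, which is weak-weak continuous by hypothesis. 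Hence the action is jointly weakly continuous.

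With joint weak continuity in hand, I would simply apply Corollary~\ref{wk cpt B} to the weakly compact set $K$ and the left reversible discrete semigroup $S$, producing a nonempty weakly compact $B\subset K$ with $T_s(B)=B$ for all $s\in S$. There is no genuine obstacle in the argument: the whole content of the corollary is the remark that, for discrete $S$, the joint-continuity requirement appearing in the preceding corollaries is a free consequence of the assumed weak continuity of the individual maps $T_s$, so that the stronger conclusion $T_s(B)=B$ (rather than merely $B\subset T_s(B)$) is available.
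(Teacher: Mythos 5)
Your proof is correct and follows exactly the route the paper intends: the corollary is stated as an immediate consequence of Corollary~\ref{wk cpt B} (equivalently, of the second assertion of Lemma~\ref{T(B)=B}), and the only point needing verification is precisely the one you supply, namely that for discrete $S$ the separate weak continuity of the maps $T_s$ already yields joint weak continuity of $(s,x)\mapsto T_s(x)$. Nothing further is required.
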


\begin{cor}\label{dct B}
Suppose that $C$ is a weak* compact nonempty subset of the dual space $E^*$ of a Banach space $E$. Let $S$ be a left reversible discrete semigroup and $\Ss =\{T_s: s\in S\}$ be a weak* continuous representation of $S$ on $C$. Then there is a weak* compact subset $B$ in $C$ such that $T_s(B) = B$ for all $s\in S$.
\end{cor}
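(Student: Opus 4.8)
The plan is to obtain this as an immediate specialization of Corollary~\ref{wk* cpt B}, after observing that discreteness of $S$ renders the joint-continuity hypothesis of that corollary automatic. I would first endow $C$ with its weak* topology; since $C$ is weak* compact, $(C,\text{weak*})$ is a compact Hausdorff space, and $\Ss$ is a representation of $S$ by weak*-weak* continuous self-mappings of this compact space.

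The only thing to check is that the action map $(s,x)\mapsto T_s(x)$ from $S\times C$ to $C$ is jointly continuous for the weak* topology on $C$. Here I would exploit that $S$ carries the discrete topology: if $(s_\al,x_\al)$ is a net in $S\times C$ with $(s_\al,x_\al)\to (s,x)$, then $s_\al\to s$ in the discrete topology forces $s_\al=s$ eventually, so eventually $T_{s_\al}(x_\al)=T_s(x_\al)$, and the assumed weak*-weak* continuity of the single map $T_s$ gives $T_s(x_\al)\to T_s(x)$ weak*. Thus joint continuity is equivalent to the weak* continuity of each $T_s$ separately, which is exactly the standing hypothesis.

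Once joint continuity is verified, Corollary~\ref{wk* cpt B} applies verbatim and produces a nonempty weak* compact $B\subset C$ with $T_s(B)=B$ for all $s\in S$, which is the desired conclusion. I do not expect any genuine obstacle: the entire content is the continuity reduction, and discreteness of $S$ trivializes the first-variable continuity, leaving only the per-map weak* continuity that is assumed. If one preferred, one could equally invoke Lemma~\ref{T(B)=B} directly with $(K,\tau)=(C,\text{weak*})$, checking separate continuity for the first assertion and the same joint-continuity reduction for the equality $T_s(B)=B$.
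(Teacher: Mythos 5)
Your proposal is correct and matches the paper's intent: the corollary is stated as an immediate specialization of Corollary~\ref{wk* cpt B} (equivalently, Lemma~\ref{T(B)=B} applied to $(C,\text{weak*})$), and your observation that discreteness of $S$ reduces joint continuity of $(s,x)\mapsto T_s(x)$ to the assumed weak* continuity of each $T_s$ is exactly the reduction needed. No gaps.
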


We point out that our Lemma~\ref{T(B)=B} was obtained in \cite[Lemma~5.1]{L-T} when $S$ is discrete or when $S$ is left amenable. Therefore, Corollaries~\ref{wk cpt B} and \ref{wk* cpt B} are valid when $S$ is a left amenable semitopological semigroup. Namely, they hold when $LUC(S)$ has a LIM. We note that left amenability of a semitopological semigroup $S$ does not imply left reversibility of it unless $S$ is discrete.

Lemma~\ref{Chebyshev} provides us a way to investigate fixed point properties of semigroups for closed convex sets of a Banach space. In light of Corollary~\ref{wk dct B}, the following theorem may be regarded as an extension of  Hsu's result \cite{Hsu} mentioned in Section~\ref{Intro}. This is the main theorem of this section.

\begin{thm}\label{discrete FL}
Let $S$ be a left reversible discrete semigroup. Then $S$ has the following fixed point property.
\begin{description}
\item[($F_L$)] Whenever $\Ss = \{T_s: s\in S\}$ is a representation of $S$ as norm nonexpansive self mappings on a nonempty L-embedded convex subset $C$ of a Banach space $E$ with each $T_s$ being weakly continuous on every weakly compact $S$-invariant convex subset of $C$, if $C$ contains a nonempty bounded subset $B$ such that $T_s(B) = B$ for all $s\in S$, then $C$ has a common fixed point for $S$.
\end{description}
\end{thm}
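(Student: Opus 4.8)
The plan is to reduce the assertion to the weakly compact convex setting, where Hsu's fixed point theorem (item (3) of the list in Section~\ref{Intro}) applies verbatim, and to carry out this reduction by passing to a Chebyshev center.

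First I would note that the hypothesis $T_s(B)=B$ for all $s\in S$ gives in particular $B\subset T_s(B)$, which is exactly the condition required by the last assertion of Lemma~\ref{Chebyshev}. Since $C$ is L-embedded and $B$ is a nonempty bounded subset of $C$, that lemma produces the Chebyshev center $W:=W_C(B)$ and tells us that $W$ is nonempty, weakly compact, convex (as $C$ is convex), and $S$-invariant. The Chebyshev radius $r_C(B)$ is finite because $B\subset C$ is bounded, so already $\sup_{b\in B}\|x-b\|\le \operatorname{diam} B<\infty$ for any fixed $x\in B$; thus $W$ is a genuine nonempty object.

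Next I would restrict the representation to $W$. Since $W$ is a weakly compact, $S$-invariant, convex subset of $C$, the standing weak-continuity hypothesis of the theorem applies on $W$: each $T_s$ is weak-weak continuous there. Hence $\{T_s|_W:s\in S\}$ is a norm nonexpansive, weakly continuous representation of the discrete left reversible semigroup $S$ on the weakly compact convex set $W$. Hsu's theorem then yields a common fixed point $x_0\in W$. Because $W\subseteq C$, this $x_0$ is a common fixed point for $S$ in $C$, so $F(S)\neq\emptyset$ and property $(F_L)$ holds.

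The crux is the reduction step, not the final appeal to Hsu: the set $C$ is only assumed L-embedded and convex, hence possibly unbounded and not weakly compact, so none of the classical theorems can be applied to $C$ itself. The role of the Chebyshev center is to convert the invariant bounded set $B$ into a canonical weakly compact convex $S$-invariant subset on which the representation is still nonexpansive; here L-embeddedness is essential, for it is precisely the $\ell^1$-splitting $x^{**}=c+\xi$ (with $\|x^{**}\|=\|c\|+\|\xi\|$) of a weak* limit point of near-optimal centers that forces $W$ to be nonempty and to lie in $C$ rather than merely in its weak* closure. The only point requiring care is observing that the theorem's weak-continuity hypothesis is stated exactly so as to be available on such a $W$.
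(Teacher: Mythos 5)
Your proof is correct and follows exactly the paper's own route: Lemma~\ref{Chebyshev} converts the invariant bounded set $B$ into the nonempty, weakly compact, convex, $S$-invariant Chebyshev center $W_C(B)$, on which the stated hypothesis guarantees weak continuity of each $T_s$, and Hsu's theorem then finishes the argument. The only difference is that the paper, observing that Hsu's result was never published, continues with a self-contained proof of it (minimal invariant sets, norm compactness and normal structure for countable $S$, then a reduction of a general left reversible discrete semigroup to countable left reversible subsemigroups via a finite intersection argument), whereas you invoke it as a black box.
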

\begin{proof}
From Lemma~\ref{Chebyshev} the Chebyshev center $W_C(B)$ is an $S$-invariant nonempty weakly compact convex subset of $C$. From the hypothesis the representation $\Ss$ is  weakly continuous on $W_C(B)$. We then can apply Hsu's result of \cite{Hsu} (see (c) of Section~\ref{Intro}) to the representation of $S$ on $W_C(B)$. Since Hsu's result has never been published, we present the following argument for the sake of completeness.

We first consider the case when $S$ is countable. Let $K$ be a minimal nonempty weakly compact convex $S$-invariant subset of $W_C(B)$, and let $F$ be a minimal nonempty weakly compact $S$-invariant subset of $K$. The existence of such $K$ and $F$ is ensured by Zorn's Lemma (note the representation $\Ss$ restricting
 on $W_C(B)$ is weakly continuous). By Corollary~\ref{wk dct B}, 
 $F$ satisfies $T_s(F) = F$ for all $s\in S$. From \cite[Lemma~3.3]{L-Z}, $F$ is norm compact and hence has normal structure. If $F$ contains more than one point, then there is $x_0\in coF \subset K$ such that 
 \[  r_0 = \sup \{\|x_0-y\|: y\in F\} < \sup\{\|x-y\|: x,y\in F, x\neq y\}. \]
 Let $M =\{x\in K : \, \|x-y\|\leq r_0 \text{ for all } y\in F\}$. Then $M$ is nonempty norm closed convex (hence is also weakly closed) subset of $K$. By the nonexpansiveness of $T_s$ and the identity $T_s(F) = F$ for all $s\in S$, $M$ is $S$-invariant. But $F\not \subset M$. So $M \subsetneq K$. This contradicts to the minimality of $K$. Therefore $F$ must be a singleton. The single point in $F$ is indeed a common fixed point for $S$.
 
 Now let $S$ be a general left reversible discrete semigroup. We show that for each finite subset $\al \subset S$, there is a countable left reversible subsemigroup $S_\al$ of $S$ such that $\al \subset S_\al$. Let $\al =\{s_1, s_2, \cdots, s_n\}$. Let $\al_1 = \al$. Since $S$ is left reversible, there are $t_1, t_2,\cdots, t_n \in S$ such that $s_1t_1 = s_2t_2 = \cdots = s_nt_n$. Let $V_1 = \{s_1, s_2, \cdots, s_n, t_1, t_2,\cdots, t_n \}$ and let $\al_2 = V_1\cup V_1^2$. Here for any set $V\subset S$, $V^2$ denotes the set $\{st: \, s,t\in V\}$. Then $\al_2$ is still a finite subset of $S$. In general, when $\al_k$ ($k\geq 1$) is defined, $\al_k = \{s^{(k)}_1, s^{(k)}_{2}, \cdots, s^{(k)}_{m_k}\}$, then there are $t^{(k)}_1, t^{(k)}_{2}, \cdots, t^{(k)}_{m_k} \in S$ such that $s^{(k)}_1t^{(k)}_1= s^{(k)}_{2}t^{(k)}_{2} = \cdots = s^{(k)}_{m_k}t^{(k)}_{m_k}$. We let $V_{k} = \{s^{(k)}_i, t^{(k)}_i: i = 1,2, \cdots m_k\}$ and let $\al_{k+1} = V_{k}\cup V_{k}^2$. By induction, we have defined a sequence $(\al_k)$ of finite subsets of $S$ such that 
 \begin{enumerate}
 \item[(1)] $\al_k \subset \al_{k+1}$, $k=1,2,\cdots$,
 \item[(2)] $\al_k^m \subset \al_{k+m}$ ($k,m \in \N$), and
 \item[(3)] $\forall r_1, r_2 \in \al_k$ there are $\tau_1, \tau_2 \in \al_{k+1}$ such that $r_1\tau_1 = r_2\tau_2$.
 \end{enumerate}
 Define $S_\al = \bigcup_{k=1}^\infty \al_k$. Then $S_\al$ is a subsemigroup of $S$. (Note that if $s_1,s_2\in S_\al$, then $s_1,s_2 \in \al_k$ for sufficiently large $k$. So $s_1,s_2 \in \al_{k+1}\subset S_\al$ by the definition of $\al_{k+1}$.) Obviously, $S_\al$ is countable and left reversible, and $\al\subset S_\al$. From what has been proved for the countable case, $W_C(B)$ has a common fixed point for $S_\al$. So the fixed point set $F(\al)$ is a nonempty weakly compact subset of $W_C(B)$. Let $\Gamma$ be the collection of all finite subsets of $S$. Then $\{F(\al)\}_{\al\in \Gamma}$ is a family of weakly compact subsets of $W_C(B)$ having the finite intersection property. Hence $\bigcap_{\al\in \Gamma} F(\al) \neq \emptyset$. This shows that $W_C(B)$ has a common fixed point for $S$.
\end{proof}
 
Recall that a semitopological semigroup $S$ is \emph{strongly left reversible} if there is a family of countable subsemigroups $\{ S_\al :\; \al \in I\}$ such that
\begin{enumerate}
\item[(1)] $S = \cup_{\al\in I}S_\al$,
\item[(2)] $\overline{aS_\al} \cap \overline{bS_\al} \ne \emptyset$ for each $\al\in I$ and $a, b\in S_\al$,
\item[(3)] for each pair $\al_1, \al_2 \in I$, there is $\al_3\in I$ such that $S_{\al_1}\cup S_{\al_2} \subset S_{\al_3}$.
\end{enumerate}
The authors showed in \cite[Lemma~5.2]{L-Z} that a metrizable left reversible semitopological semigroup is always strongly left reversible. Using this fact and the same argument as in the proof of Theorem~\ref{discrete FL} one can prove the following theorem.

\begin{thm}\label{metrizable FL}
Let $S$ be a metrizable left reversible semitopological semigroup. Let $\Ss = \{T_s: s\in S\}$ be a norm nonexpansive representation of $S$ on a nonempty L-embedded  convex subset $C$ of a Banach space $E$ such that $C$ contains a nonempty bounded subset $B$ with $T_s(B) = B$ for all $s\in S$. If for every weakly compact $S$-invariant convex subset $M$ of $C$  the mapping $(s, x)\mapsto T_s(x)$ is jointly continuous from $S\times M$ into $M$ when $M$ is endowed with the weak topology of $E$, then $C$ has a common fixed point for $S$.
\end{thm}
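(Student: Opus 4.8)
The plan is to follow the proof of Theorem~\ref{discrete FL} almost verbatim, using strong left reversibility (\cite[Lemma~5.2]{L-Z}) to supply the countable subsemigroups $\{S_\al:\al\in I\}$ in place of the explicit construction carried out there, and invoking Corollary~\ref{wk cpt B} (which needs joint weak continuity) instead of Corollary~\ref{wk dct B}. First I would reduce to the Chebyshev center. By Lemma~\ref{Chebyshev}, since $C$ is L-embedded and convex and $T_s(B)=B$ (so in particular $B\subset T_s(B)$) for all $s\in S$, the Chebyshev center $W=W_C(B)$ is a nonempty weakly compact convex $S$-invariant subset of $C$. As $W$ is weakly compact, convex and $S$-invariant, the joint-continuity hypothesis applies with $M=W$: the map $(s,x)\mapsto T_s(x)$ is jointly continuous from $S\times W$ into $W$ for the weak topology. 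Restricting this to $S_\al\times W'$ for any subsemigroup $S_\al$ and any $S_\al$-invariant weakly compact $W'\subset W$ again yields a jointly weakly continuous action, which is exactly what lets Corollary~\ref{wk cpt B} be invoked below.

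Next, for each member $S_\al$ of the strongly left reversible family I would reproduce the countable argument of Theorem~\ref{discrete FL} inside $W$. By Zorn's Lemma choose a minimal nonempty weakly compact convex $S_\al$-invariant set $K\subset W$, and inside it a minimal nonempty weakly compact $S_\al$-invariant set $F$; intersections of decreasing chains remain nonempty (finite intersection property), weakly compact and invariant, so Zorn applies. Since $S_\al$ is a countable left reversible semitopological semigroup acting jointly weakly continuously on the weakly compact set $F$, Corollary~\ref{wk cpt B} produces a nonempty weakly compact $B'\subseteq F$ with $T_s(B')=B'$ for all $s\in S_\al$, and minimality of $F$ forces $B'=F$, i.e.\ $T_s(F)=F$ for all $s\in S_\al$. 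By \cite[Lemma~3.3]{L-Z} the set $F$ is then norm compact, hence has normal structure. If $F$ had more than one point, normal structure would give $x_0\in coF\subset K$ with
\[ r_0 = \sup\{\|x_0-y\|: y\in F\} < \sup\{\|x-y\|: x,y\in F,\ x\neq y\}, \]
and then $M=\{x\in K: \|x-y\|\le r_0 \text{ for all } y\in F\}$ would be a nonempty, norm-closed (hence weakly closed and weakly compact) convex subset of $K$ which, by nonexpansiveness together with $T_s(F)=F$, is $S_\al$-invariant, while $F\not\subset M$ makes it a proper subset of $K$, contradicting minimality of $K$. Hence $F$ is a singleton and $W$ contains a common fixed point of $S_\al$; in particular $F(S_\al)\subset W$ is nonempty.

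Finally I would patch the pieces together. Because the action is jointly continuous, each $T_s$ is weak-weak continuous on $W$, so each $F(S_\al)=\bigcap_{s\in S_\al}\{x\in W: T_s(x)=x\}$ is weakly closed in the weakly compact set $W$, hence weakly compact. Condition~(3) of strong left reversibility makes $\{F(S_\al)\}_{\al\in I}$ downward directed, since $F(S_{\al_3})\subset F(S_{\al_1})\cap F(S_{\al_2})$ whenever $S_{\al_1}\cup S_{\al_2}\subset S_{\al_3}$; iterating the pairwise bound over finitely many indices shows the family has the finite intersection property. Weak compactness then gives $\bigcap_{\al\in I}F(S_\al)\neq\emptyset$, and since $S=\bigcup_{\al\in I}S_\al$, any point of this intersection is a common fixed point for all of $S$.

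The main obstacle is the continuity bookkeeping: Corollary~\ref{wk cpt B}, and hence the identity $T_s(F)=F$, requires the restricted action of each countable $S_\al$ on the minimal weakly compact sets $F$ and $K$ to be jointly weakly continuous, yet these sets are only $S_\al$-invariant, not $S$-invariant, so the hypothesis cannot be applied to them directly. The point to get right is that the hypothesis, applied once to the genuinely $S$-invariant set $W=W_C(B)$, delivers joint weak continuity on all of $S\times W$, from which the continuity needed on $S_\al\times F$ and $S_\al\times K$ follows by restriction; one must also confirm that each $S_\al$ furnished by strong left reversibility is a countable left reversible semitopological semigroup, so that the minimal-set-plus-normal-structure argument of Theorem~\ref{discrete FL} transfers without change.
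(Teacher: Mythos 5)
Your proposal is correct and follows essentially the same route as the paper, which proves this theorem by citing \cite[Lemma~5.2]{L-Z} (metrizable left reversible implies strongly left reversible) and then repeating the argument of Theorem~\ref{discrete FL} with the countable subsemigroups $S_\al$ supplied by strong left reversibility and with Corollary~\ref{wk cpt B} in place of Corollary~\ref{wk dct B}. Your care about where the joint weak continuity comes from (applied once to the $S$-invariant set $W_C(B)$ and then restricted to the minimal sets) and about the downward directedness of $\{F(S_\al)\}$ via condition (3) fills in exactly the details the paper leaves implicit.
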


 Again, using \cite[Theorem~3]{Lim 74} (see 
 (b) of Section~\ref{Intro}), by the same argument as the first paragraph in the proof of Theorem~\ref{discrete FL} one derives the following theorem.

\begin{thm}\label{normal structure}
Let $S$ be a left reversible semitopological semigroup and let $\Ss =\{T_s: s\in S\}$ be a representation of $S$ as norm nonexpansive and separately continuous self mappings on a nonempty L-embedded convex subset $C$ of a Banach space $E$. Suppose that $C$ has normal structure and contains a nonempty bounded subset $B$ such that $T_s(B) = B$ for all $s\in S$. Then $C$ has a common fixed point for $S$.
\end{thm}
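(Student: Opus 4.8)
The plan is to imitate, almost word for word, the first paragraph of the proof of Theorem~\ref{discrete FL}, invoking Lim's theorem \cite[Theorem~3]{Lim 74} (condition (b) of Section~\ref{Intro}) in place of Hsu's theorem. All the substantive work is already packaged into Lemma~\ref{Chebyshev}, whose role is exactly to convert an L-embedded convex set into a weakly compact convex invariant set; the present proof only has to check that the set so produced meets Lim's hypotheses.

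First I would apply Lemma~\ref{Chebyshev} to the bounded set $B$. Since we are given $T_s(B)=B$ for all $s\in S$, we have in particular $B\subset T_s(B)$, which is precisely the invariance hypothesis the lemma requires. As $C$ is L-embedded and convex and $B$ is a nonempty bounded subset of $C$, the lemma yields that the Chebyshev center $W_C(B)$ is a nonempty, weakly compact, convex, $S$-invariant subset of $C$.

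Next I would transfer the remaining hypotheses to $W_C(B)$. Left reversibility of $S$ is untouched, while norm nonexpansiveness and separate continuity of each $T_s$ pass to the restriction of $\Ss$ to $W_C(B)$. The only clause worth a remark is normal structure, which is inherited by $W_C(B)$: every bounded subset $H$ of $W_C(B)$ is a bounded subset of $C$, and the convexity of $W_C(B)$ ensures that the point $x_0\in coH$ furnished by the normal structure of $C$ already lies in $W_C(B)$. Hence $W_C(B)$ is a weakly compact convex set with normal structure carrying a norm nonexpansive, separately continuous representation of the left reversible semitopological semigroup $S$, and Lim's theorem produces a point fixed by every $T_s$. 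Since $W_C(B)\subset C$, this is a common fixed point for $S$ in $C$, as required. Note that, unlike the proof of Theorem~\ref{discrete FL}, where Hsu's \emph{discrete} result forced a passage to countable subsemigroups, Lim's theorem applies directly to the semitopological semigroup $S$, so no such reduction is needed here.

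The sole point requiring care --- and hence the main, if modest, obstacle --- is to confirm that the continuity assumed in the statement, namely separate continuity, is exactly what Lim's theorem demands in the weakly compact convex normal-structure case, so that restricting to $W_C(B)$ calls for no property beyond those already in hand. Everything else is a routine inheritance of structure from $C$ to the invariant subset $W_C(B)$ supplied by Lemma~\ref{Chebyshev}.
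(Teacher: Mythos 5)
Your proof is correct and is essentially the paper's own argument: the paper derives this theorem by applying Lemma~\ref{Chebyshev} to produce the nonempty weakly compact convex $S$-invariant set $W_C(B)$ (which, as you note, inherits normal structure from $C$ by convexity) and then invoking Lim's theorem \cite[Theorem~3]{Lim 74} exactly as in the first paragraph of the proof of Theorem~\ref{discrete FL}. Nothing further is needed.
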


Suppose that $C$ is a weakly compact convex subset of a separable locally convex topological space $(E,Q)$ and suppose that $\Ss = \{T_s:\; s\in S\}$ is a representation of the semigroup $S$ on $C$. Let $X$ be a translation invariant subspace of $\ell^\infty(S)$ containing the constant functions. Denote the set of all means on $X$ by $M(X)$. It is well-known that $M(X)$ is the weak* closure in $X^*$ of $co\{\ep_s:\, s\in S\}$, the convex hull of all evaluation functionals $\ep_s$ ($s\in S$).  If for each $x\in C$ and $\phi \in E^*$, the function $\phi_x$ defined by $\phi_x(s) = \la sx, \phi\ra$ ($s\in S$) belongs to $X$, then for each $m\in M(X)$ we can define $T_m$ on $C$ by $\la T_mx, \phi\ra = m(\phi_x)$ ($x\in C$, $\phi \in E^*$). Since $C$ is weakly compact, $T_m$ indeed maps $C$ into $C$. Moreover, if $m = \text{wk*-}\lim_\al u_\al$ with $u_\al = \sum{c_i^{(\al)}\ep_{s_i^{(\al)}}}\subset co\{\ep_s:\, s\in S\}$, then $T_m x= \text{wk-}\lim_\al \sum{c_i^{(\al)}s_i^{(\al)}x}$. If, in addition, $T_s$ is weakly continuous and affine then $T_s\circ T_m (x) = \text{wk-}\lim_\al \sum{c_i^{(\al)}ss_i^{(\al)}x} = T_{\ell_s^*m}(x)$, where $\ell_s$ is the left translate by $s$ and $\ell_s^*$ is the dual of $\ell_s$. In particular, if $m$ is a LIM on $X$, then $T_s\circ T_m (x) =T_m(x)$ for all $s\in S$.

The following lemma was proved for discrete semigroups in \cite{Lau76}. The proof also works in the general semitopological semigroup setting. However, since \cite{Lau76} is not readily available, a proof is included for the sake of completeness.

\begin{lemma}\label{WAP affine}
Let $S$ be a semitopological semigroup.
Suppose that $WAP(S)$ has a LIM $\mu$ and that $\Ss =\{T_s: s\in S\}$ is a representation of $S$ as separately continuous and equicontinuous affine mappings on a nonempty weakly compact convex subset $C$ of a separated locally convex space $(E,Q)$. Then there is a common fixed point for $S$ in $C$. 
\end{lemma}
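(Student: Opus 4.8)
The plan is to build the fixed point by averaging against the invariant mean $\mu$ and then showing that this averaged map has a fixed point that is genuinely fixed by the whole semigroup. Since $WAP(S)$ has a LIM $\mu$, the first step is to check that for each $x \in C$ and each $\phi \in E^*$ the coefficient function $\phi_x(s) = \la sx, \phi\ra$ belongs to $WAP(S)$. This requires two things: that $\phi_x$ is bounded and continuous (bounded because $C$ is weakly compact, hence bounded, and $\phi$ is continuous; continuous because the representation is separately continuous), and that $\LO(\phi_x) = \{\ell_s \phi_x : s \in S\}$ is relatively weakly compact in $C_b(S)$. For the weak almost periodicity I would exploit the equicontinuity and affineness of $\Ss$ together with the weak compactness of $C$: the orbit map $s \mapsto sx$ lands in the weakly compact set $C$, so the family $\{\phi_{sx} : s \in S\}$ is a bounded family of functions whose values are controlled by the single weakly compact set $C$, and a Grothendieck-type double-limit criterion should identify $\phi_x$ as weakly almost periodic.

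Once $\phi_x \in WAP(S)$ for all relevant $x, \phi$, I would define the averaged operator $T_\mu$ on $C$ by $\la T_\mu x, \phi\ra = \mu(\phi_x)$, exactly as in the discussion preceding the lemma. Because $C$ is weakly compact and convex and $\mu$ is a weak* limit of convex combinations $u_\al = \sum c_i^{(\al)} \ep_{s_i^{(\al)}}$ of evaluation functionals, $T_\mu x = \text{wk-}\lim_\al \sum c_i^{(\al)} s_i^{(\al)} x$ lies in $C$, so $T_\mu$ maps $C$ into $C$. The key computation is the intertwining identity: since each $T_s$ is weakly continuous, affine, and equicontinuous, I would verify $T_s \circ T_\mu = T_{\ell_s^* \mu}$, and then use left invariance of $\mu$ (so $\ell_s^* \mu = \mu$) to conclude $T_s \circ T_\mu = T_\mu$ for every $s \in S$. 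This means the range $T_\mu(C)$ consists entirely of points $y$ with $T_s y = T_s(T_\mu x) = T_\mu x = y$, i.e. every point of $T_\mu(C)$ is already a common fixed point for $S$. Since $T_\mu(C)$ is nonempty, $F(S) \neq \emptyset$.

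The main obstacle I anticipate is verifying the membership $\phi_x \in WAP(S)$ rigorously, and in particular checking that $T_s$ is weakly continuous on $C$ so that the intertwining identity $T_s \circ T_\mu = T_{\ell_s^* \mu}$ is valid. The hypotheses supply separate continuity and equicontinuity but not weak continuity outright, so I would need to deduce the weak continuity of each $T_s$ on the weakly compact convex set $C$ from equicontinuity plus affineness (equicontinuous affine maps on a weakly compact convex set should be weak-weak continuous, since they are determined by their action on a weakly dense set of convex combinations and equicontinuity transfers limits). A secondary point requiring care is justifying that the weak limit defining $T_\mu x$ exists and is independent of the approximating net, which follows because $\mu(\phi_x)$ is a well-defined scalar for each $\phi$ and the weak topology on the weakly compact set $C$ is determined by $E^*$; one must confirm that the functional $\phi \mapsto \mu(\phi_x)$ is indeed represented by a point of $C$, which is exactly where weak compactness of $C$ is used. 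Once weak continuity of each $T_s$ is secured, the averaging argument and the invariance computation proceed as above without further difficulty.
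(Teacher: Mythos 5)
Your overall architecture is exactly the paper's: define $T_\mu$ by $\la T_\mu x,\phi\ra = \mu(\phi_x)$, prove the intertwining identity $T_s\circ T_\mu = T_{\ell_s^*\mu} = T_\mu$, and conclude $F(S)\supseteq T_\mu(C)\neq\emptyset$. That part of your plan is sound, and your worry about weak continuity of each $T_s$ is resolved more cheaply than you suggest: any $Q$-continuous \emph{affine} map of a weakly compact convex set into itself is automatically weak--weak continuous by Mazur's theorem (if $x_\al\to x$ weakly then $x\in\overline{co}^Q(x_\al:\al\geq\al_0)$), with no appeal to equicontinuity at all.

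The genuine gap is in the step you yourself flag as the main obstacle, namely $\phi_x\in WAP(S)$. First, $\ell_s\phi_x(t)=\phi(T_s(T_tx))$, which is not $\phi_{T_sx}(t)=\phi(T_t(T_sx))$; so the left orbit of $\phi_x$ is not controlled merely by the fact that the orbit $\{T_sx\}$ sits inside the weakly compact set $C$. When you run the double-limit criterion on $\lim_m\lim_n\phi(T_{s_m}(T_{t_n}x))$ versus $\lim_n\lim_m\phi(T_{s_m}(T_{t_n}x))$, the first iterated limit is $\phi(T(y))$ with $y=\text{wk-}\lim_n T_{t_n}x$ and $T=\lim_m T_{s_m}$ taken in $(C,\text{wk})^C$, while the second is $\lim_n\phi(T(T_{t_n}x))$; these agree precisely when the \emph{limit operator} $T\in\overline{\Ss}^{\text{wk}}$ is itself weakly continuous. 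This weak quasi-equicontinuity is the one place equicontinuity is actually needed, and it is not automatic: the paper derives it by passing to the convex hull $\Phi=co(\Ss)$, noting that $\overline{\Phi}^{\text{wk}}=\overline{\Phi}^{\tau}$ by Mazur's theorem in the product space $(E,Q)^C$, and that the $\tau$-closure of an equicontinuous family of $Q$-continuous affine maps consists of $Q$-continuous affine maps, hence (by the first step) of weakly continuous maps. Your sketch attributes equicontinuity to the wrong step and omits this closure argument, which is the real content of the lemma; once it is supplied (or \cite[Lemma~3.2]{L-Z} is invoked), the remainder of your proof goes through. One further small point: you should locate $T_\mu$ in $\overline{\Phi}^{\text{wk}}$, so that its range lies in $C$ and the identity $T_s\circ T_\mu=T_{\ell_s^*\mu}$ can be verified on the approximating convex combinations using affineness and the weak continuity just established.
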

\begin{proof} 
We first show that if $C$ is weakly compact convex in $(E,Q)$ and if $T$: $C\to C$ is $Q$-continuous and affine, then $T$ is also weakly continuous, that is, if $(x_\al)\subset C$ and $x_\al\overset{\text{wk}}{\to}x$, then $T(x_\al)\overset{\text{wk}}{\to}T(x)$. For this we only need to verify that, for any $\ep >0$ and $\phi \in E^*$, there is $\al_0$ such that $|\la T(x_\al) - T(x),\phi\ra| < \ep$ for $\al \geq \al_0$. Without loss of generality we may assume $T(x_\al)\overset{\text{wk}}{\to} y\in C$. Choose $\al_0$ such that
\[  |\la T(x_\al) - y,\phi\ra| < \ep/4 \text{ for } \al\geq \al_0. \]
By Mazur's Theorem $x\in \overline{co}^Q(x_\al: \al\geq \al_0)$. Since $T$ is $Q$-continuous, there is $x_\ep \in co(x_\al: \al\geq \al_0)$ such that $|\la T(x) - T(x_\ep),\phi\ra| < \ep/4$. Clearly $|\la T(x_\ep) - y,\phi\ra| < \ep/4$ since $T$ is affine. Therefore $|\la T(x) - y,\phi\ra| < \ep/2$, and $|\la T(x_\al) - T(x),\phi\ra| < \ep$ for $\al \geq \al_0$.

If the representation $\Ss$ on $C$ is affine and separately continuous, then, using the above we derive that the representation is also separately weakly continuous. If, in addition, the representation is $Q$-equicontinuous, we show that it must be weakly quasi-equicontinuous. In other words, if $T\in \overline{\Ss}^{\text{wk}}$, the closure of $\Ss$ in the product space $(C,\text{wk})^C$, then $T$: $(C, \text{wk}) \to  (C, \text{wk})$ is continuous. Consider the topological vector space $F=(E, Q)^C$ with product topology $\tau$. Then the weak topology of $F$ is precisely the product topology of $(E,\text{wk})^C$ (\cite[17.13]{K-N}). So $\overline{\Ss}^{\text{wk}}$ is the closure of $\Ss$ in $(F,\text{wk})$.
Let $\Phi = co(\Ss)$. From Mazur's Theorem, $\overline{\Phi}^{\text{wk}}$, the closure of $\Phi$ in $(F,\text{wk})$,
is the same as $\overline{\Phi}^\tau$, the closure of $\Phi$ in the $\tau$-topology of $F$. 
 From the hypothesis mappings in $\Phi$ are affine and $Q$-equicontinuous. This implies that $\overline{\Phi}^\tau$ consists only of $Q$-continuous affine mappings which are also weakly continuous from the first part of the proof. Thus, $\overline{\Ss}^{\text{wk}}\subset \overline{\Phi}^{\text{wk}}=\overline{\Phi}^\tau$ consists entirely of weakly continuous mappings or $\Ss$ is weakly quasi-equicontinuous. From \cite[Lemma~3.2]{L-Z}, $\phi_x \in WAP(S)$ for each $\phi\in E^*$ and each $x\in C$. So $T_\mu$ is well-defined and $T_\mu\in \overline{\Phi}^{\text{wk}}$. 
Now $r=\mu$ is a right zero of $S$ under the product $s\mu = \ell_s^*(\mu)$ since $\mu$ is a LIM on $WAP(S)$, where $\ell_s$: $WAP(S) \to WAP(S)$ is the left translate by $s$.
 Moreover, $T_s\circ T_\mu = T_{s\mu} = T_\mu$ since the representation is affine (see the paragraph after Theorem~\ref{normal structure}). It follows that 
 $F(S) = F(\mu) = T_\mu(C)\neq \emptyset$.
\end{proof}

In light of Lemma~\ref{Chebyshev} and Lemma~\ref{WAP affine} we have the following theorem for ``affine nonexpansive mappings on a closed convex set in a Banach space''.

\begin{thm}\label{WAP has LIM}
Let $C$ be a nonempty closed convex set in a Banach space and $S$ be a semitopological semigroup such that $WAP(S)$ has a LIM. Suppose that $\Ss=\{T_s: s\in S\}$ is a representation of $S$ as separately continuous nonexpansive affine mappings on $C$. If $C$ is L- embedded and there is a bounded set $B\subset C$ such that $T_s(B) = B$ for all $s\in S$, then $C$ has a common fixed point for $S$.
\end{thm}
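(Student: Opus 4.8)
The plan is to deduce this theorem as a direct combination of Lemma~\ref{Chebyshev} and Lemma~\ref{WAP affine}, exactly as the sentence introducing it anticipates. The strategy is to pass from the (possibly unbounded, merely closed) convex set $C$ to a genuinely weakly compact convex $S$-invariant subset on which the hypotheses of Lemma~\ref{WAP affine} can be checked, and then to let that lemma manufacture the fixed point.

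First I would form the Chebyshev center $W_C(B)$ of $B$ in $C$. Since $T_s(B)=B$ trivially gives $B\subset T_s(B)$ for every $s\in S$, Lemma~\ref{Chebyshev} applies verbatim: because $C$ is L-embedded and convex and $B$ is a nonempty bounded subset, $W_C(B)$ is a nonempty, weakly compact, convex, $S$-invariant subset of $C$. Thus the restricted family $\{T_s|_{W_C(B)}\}$ is a representation of $S$ on the weakly compact convex set $W_C(B)$, which I regard as a weakly compact convex subset of the locally convex space $(E,Q)$ with $Q$ the norm topology of $E$. It then remains only to verify that this restricted representation meets the standing hypotheses of Lemma~\ref{WAP affine}.

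Affineness is inherited from $\Ss$. Separate continuity also restricts without difficulty: each $T_s$ is norm-continuous on $C$ (a nonexpansive map is $1$-Lipschitz, hence continuous), so its restriction to $W_C(B)$ is norm-continuous; and for fixed $x\in W_C(B)$ the orbit map $s\mapsto T_s x$ is continuous into $C$ by hypothesis and lands in $W_C(B)$ by $S$-invariance, so it is continuous into $W_C(B)$ with its subspace topology. The one point that actually does any work is equicontinuity: Lemma~\ref{WAP affine} asks for $Q$-equicontinuity of the family, and here $Q$ is the norm; the nonexpansiveness inequality $\|T_sx-T_sy\|\le\|x-y\|$ shows that for any $\ep>0$ the single choice $\de=\ep$ witnesses equicontinuity uniformly in $s$. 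This is precisely the bridge converting the ``nonexpansive'' hypothesis of the present theorem into the ``equicontinuous'' hypothesis of Lemma~\ref{WAP affine}, and I expect it to be the only step needing comment. With all hypotheses in place, Lemma~\ref{WAP affine} produces a common fixed point for $S$ in $W_C(B)\subset C$, completing the argument. The genuine difficulties---extracting a weakly compact invariant set from an L-embedded convex set (Lemma~\ref{Chebyshev}), and building the fixed point from a LIM on $WAP(S)$ via the averaging/right-zero map $T_\mu$ (Lemma~\ref{WAP affine})---have already been absorbed into those two lemmas, so the present proof is essentially the bookkeeping that links them.
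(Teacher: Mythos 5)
Your proposal is correct and is exactly the argument the paper intends: the paper offers no separate proof of Theorem~\ref{WAP has LIM}, presenting it as an immediate consequence of Lemma~\ref{Chebyshev} (which yields the nonempty weakly compact convex $S$-invariant set $W_C(B)$) and Lemma~\ref{WAP affine} (applied with $Q$ the norm topology, nonexpansiveness supplying equicontinuity). Your identification of the nonexpansive-to-equicontinuous bridge as the only point needing comment matches the paper's reading.
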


\begin{remark}
If $S$ is a locally compact group then $WAP(S)$ always has a LIM and, for each $x\in C$, $B= Sx$ always satisfies $T_s(B) = B$ for all $s\in S$. So, if there is $x\in C$ such that $Sx$ is bounded, then every separately continuous nonexpansive affine representation of $S$ on a nonempty L-embedded convex set of a Banach space has a common fixed point for $S$. 
\end{remark}

Theorem~\ref{WAP has LIM} is related to Theorem~A in the recent paper \cite{BGM} of  Bader, Gelander and  Monod, which was used to give a short proof to the long standing derivation problem, that is, every continuous derivation on the group algebra $L^1(G)$ is inner for any locally compact group $G$. The problem was first solved by V. Losert in \cite{Losert}.

For discrete semigroup $S$ it is well-known that the left reversibility of $S$ implies that $WAP(S)$ has a left invariant mean and the converse is not true due to \cite[Theorem~4.11]{L-Z}. For general semitopological semigroups $S$,  the relation between the left reversibility of $S$ and the existence of a left invariant mean for $WAP(S)$ is still unknown. 

We recall that if $S$ acts on a Hausdorff space $X$, the action is quasi-equicontinuous if $\overline{S}^p$, the closure of $S$ in the product space $X^X$, consists entirely of continuous mappings \cite{L-Z}. Let $C$ be a closed convex subset of a Banach space $E$ and $S$ acts on $C$ as self mappings. We say that the action is hereditary weakly quasi-equicontinuous in $C$ if for each weakly compact $S$-invariant convex subset $K$ of $C$ the $S$-action on $(K, \text{wk})$ is quasi-equicontinuous, where $(K, \text{wk})$ is $K$ with the weak topology of $E$. We note that, if the $S$-action on $C$ is weakly equicontinuous, then it is hereditary quasi-equicontinuous in $C$. By \cite[Lemma~3.1]{L-Z}, if $C$ is weakly compact and the $S$-action on $C$ is weakly quasi-equicontinuous, then the $S$-action is hereditary weakly quasi-equicontinuous in $C$.

\begin{thm}\label{FL}
Let $S$ be a separable semitopological semigroup. Suppose that $WAP(S)$ has a left invariant mean. Then $S$ has the following fixed point property.
\begin{description}
\item[($F_L'$)] If $S$ acts on a nonempty convex L-embedded subset $C$ of a Banach space as norm nonexpansive and hereditary weakly quasi-equicontinuous mappings for which the mapping $s \mapsto T_s(x)$ ($s\in S$) is weakly continuous whenever $x$ belongs to any  weakly compact $S$-invariant convex subset of $C$ and if $C$ contains a nonempty bounded subset $B$ such that $T_s(B) = B$ ($s\in S$), then there is a common fixed point for $S$ in $C$.
\end{description}
\end{thm}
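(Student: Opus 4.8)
The plan is to reduce Theorem~\ref{FL} to the combination of Lemma~\ref{Chebyshev} and Lemma~\ref{WAP affine} by passing to the Chebyshev center, exactly as was done in the proofs of Theorems~\ref{discrete FL} and \ref{WAP has LIM}, but now exploiting the hereditary weak quasi-equicontinuity to handle the \emph{nonlinear} (non-affine) character of the mappings. First I would invoke Lemma~\ref{Chebyshev}: since $C$ is L-embedded and contains a bounded set $B$ with $T_s(B)=B$ (hence in particular $B\subset T_s(B)$) for all $s\in S$, the Chebyshev center $W=W_C(B)$ is a nonempty, weakly compact, convex, $S$-invariant subset of $C$. This replaces the unbounded, merely closed set $C$ by a genuinely weakly compact convex set on which the whole machinery of invariant means and right zeros can operate; this is the same first move as in Theorem~\ref{discrete FL}.

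Next I would work entirely on $W$. By hypothesis the $S$-action is hereditary weakly quasi-equicontinuous in $C$, so the action on $(W,\text{wk})$ is weakly quasi-equicontinuous, and by hypothesis $s\mapsto T_s(x)$ is weakly continuous for each $x\in W$. The key idea is to embed the representation into the enveloping semigroup: let $\Sigma=\overline{\Ss}^{\text{wk}}$ be the closure of $\{T_s:s\in S\}$ in the product space $(W,\text{wk})^W$. Quasi-equicontinuity guarantees that every element of $\Sigma$ is a weakly continuous self-map of $W$, and each such map is still norm nonexpansive (nonexpansiveness passes to pointwise weak limits since the norm is weakly lower semicontinuous). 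The separability of $S$ together with the weak continuity of $s\mapsto T_s(x)$ should let me verify, via \cite[Lemma~3.2]{L-Z}, that for every $\phi\in E^*$ and $x\in W$ the coefficient function $\phi_x(s)=\la T_s(x),\phi\ra$ lies in $WAP(S)$, which is precisely what is needed for the mean-operator $T_\mu$ to be defined.

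Now I would use the left invariant mean $\mu$ on $WAP(S)$. Following the construction in the paragraph after Theorem~\ref{normal structure} and in the proof of Lemma~\ref{WAP affine}, the element $T_\mu$ defined by $\la T_\mu x,\phi\ra=\mu(\phi_x)$ lies in $\overline{co(\Ss)}^{\text{wk}}$ and, because $\mu$ is a LIM, satisfies $T_s\circ T_\mu = T_{\ell_s^*\mu}=T_\mu$ for every $s\in S$. The crucial point is that $\mu$ acts as a \emph{right zero}, so the range $T_\mu(W)$ is pointwise fixed by every $T_s$; hence $F(S)\supseteq T_\mu(W)\neq\emptyset$ and we are done. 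The main obstacle, and the place where the hypotheses must be used most carefully, is that $T_\mu$ need \emph{not} be affine here (unlike in Lemma~\ref{WAP affine}), so I cannot directly conclude $T_s\circ T_\mu=T_\mu$ from affineness. Instead the identity $T_s\circ T_\mu=T_\mu$ must be read off from the right-zero property of $\mu$ in the enveloping semigroup $\Sigma$: one shows $T_\mu\in\Sigma$ (not merely in its convex hull), so that composition in $\Sigma$ corresponds to the semigroup multiplication on $WAP(S)^*$ and the LIM property $\ell_s^*\mu=\mu$ transfers to $T_sT_\mu=T_\mu$. Establishing that $T_\mu$ is realized as an element of the enveloping semigroup, and that composition there is continuous enough to identify $T_sT_\mu$ with $T_{s\mu}$, is the technical heart of the argument; this is where hereditary weak quasi-equicontinuity (ensuring $\Sigma$ is a semigroup of continuous maps on which the right-zero calculus is valid) does the real work.
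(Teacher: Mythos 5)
Your first step is exactly the paper's: Lemma~\ref{Chebyshev} converts the L-embedded set $C$ into the nonempty, weakly compact, convex, $S$-invariant Chebyshev center $W=W_C(B)$. After that the two arguments diverge. The paper simply invokes \cite[Theorem~3.4]{L-Z}, which is precisely the statement that a \emph{separable} semitopological semigroup with a LIM on $WAP(S)$ has a common fixed point for every weakly quasi-equicontinuous, weakly continuous, nonexpansive action on a weakly compact convex set. Your proposal instead tries to reprove that step from scratch via the mean operator $T_\mu$, and this is where there is a genuine gap.

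The gap: a left invariant mean $\mu$ on $WAP(S)$ is a weak* limit of \emph{convex combinations} of evaluation functionals, not of evaluation functionals themselves (the latter would require $\mu$ to be multiplicative). Consequently $T_\mu$ lies only in $\overline{co(\Ss)}^{\text{wk}}$, and there is no reason for it to lie in the enveloping semigroup $\Sigma=\overline{\Ss}^{\text{wk}}$; your plan to ``show $T_\mu\in\Sigma$'' cannot be carried out in general. Once $T_\mu$ is merely in the closed convex hull, the identity $T_s\circ T_\mu=T_{\ell_s^*\mu}=T_\mu$ requires $T_s$ to commute with weak limits of convex combinations, i.e.\ requires $T_s$ to be affine and weakly continuous --- exactly as the paper notes in the paragraph following Theorem~\ref{normal structure} and exploits in Lemma~\ref{WAP affine}. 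For a non-affine nonexpansive action and a non-multiplicative LIM, the right-zero calculus does not transfer, and $T_\mu(W)$ need not consist of fixed points. This is why the paper outsources the weakly compact step to \cite[Theorem~3.4]{L-Z} (whose proof uses separability in an essentially different way), and why the affine case (Lemma~\ref{WAP affine}, Theorem~\ref{WAP has LIM}) and the multiplicative-LIM case (Theorem~\ref{WAP MLIM}) are treated by separate arguments elsewhere in the paper. To repair your write-up within the paper's framework, you should replace the $T_\mu$ construction on $W$ by a direct appeal to \cite[Theorem~3.4]{L-Z}, checking that the hereditary weak quasi-equicontinuity and the weak continuity of $s\mapsto T_s(x)$ on $W$ are exactly the hypotheses that theorem needs.
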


\begin{proof}
If $C$ is L-embedded and $B\subset C$ is bounded such that $T_s(B) =B$, by Lemma~\ref{Chebyshev}, $K = W_C(B)$ is nonempty weakly compact convex subset of $C$, that is, $S$-invariant. By \cite[Theorem~3.4]{L-Z} $W_C(B)$ has a common fixed point for $S$ if $WAP(S)$ has a LIM. So ($F_L')$ holds.
\end{proof}

We now give an example of bounded closed convex set in an L-embedded Banach space that is not itself L-embedded.

\begin{ex}
Let $G$ be a non-amenable group and let $C$ be the set of all means on $\ell^\infty(G)$. 
Then $C$ is not L-embedded although it is a weak* compact convex subset of $\ell^\infty(G)^*$ 
 which, as the dual space of the von Neumman algebra $\ell^\infty(G)$
, is indeed an L-embedded Banach space. As a consequence,  the set of all means on $\ell^\infty$ is not L-embedded.
\end{ex}
\begin{proof}
In fact, every group is left reversible. Let $\ell_s$ be the operator of left translation by $s$ on $\ell^\infty(G)$ ($s\in G$) and let $\ell_s^*$ be its dual operator. Then $\{\ell_s^*: s\in G\}$ is a representation of $G$ on $C$ as norm nonexpansive weak*-weak* continuous mappings. By Corollary~\ref{dct B} there is a nonempty weak* compact subset $B$ of $C$ such that $T_s(B) =B$ for all $s\in G$. On the other hand, as a bounded linear operator on a Banach space each $\ell_s^*$ is automatically weak-weak continuous. If $C$ were L-embedded then, due to Theorem~\ref{discrete FL}, $C$ would have a common fixed point for $G$ which would be a left invariant mean on $\ell^\infty(G)$. But $G$ is not amenable as assumed. There is no such an invariant mean. So $C$ is not L-embedded. 

In particular, the conclusion is true for $G = \Ff_2$. On the other hand, since $\Ff_2$ is countable, as a Banach space $\ell^\infty$ is isometrically isomorphic to $\ell^\infty(\Ff_2)$. So the set of all means on $\ell^\infty$ is not L-embedded.
\end{proof}

For a semitopological semigroup $S$, simply examining the representation of $S$ on the weak* compact convex subset of all means on $LUC(S)$ defined by the dual of left translations on $LUC(S)$, one sees that if the following fixed point property holds then $LUC(S)$ has a left invariant mean.

\begin{description}
\item[($F_*$)]\label{F*} Whenever $\Ss = \{T_s:\; s\in S\}$ is a representation of $S$ as norm non-expansive mappings on a non-empty weak* compact convex set $C$ of the dual space of a Banach space $E$ and the mapping $(s,x)\mapsto T_s(x)$ from $S\times C$ to $C$ is jointly continuous, where $C$ is equipped with the weak* topology of $E^*$, then there is a common fixed point for $S$ in $C$.
\end{description}

Whether the converse is true is an open problem 
\cite{Lau.survey}.


For a discrete semigroup acting on a subset of the dual of an M-embedded Banach space, we have the following theorem.

\begin{thm}\label{F*M}
Let $S$ be a discrete left reversible semigroup. Then $S$ has the following fixed point property.
\begin{description}
\item[($F_{*M}$)]\label{FM} Whenever $\Ss = \{T_s:\; s\in S\}$ is a representation of $S$ as weak* continuous, norm nonexpansive mappings on a nonempty weak* compact convex set $C$ of the dual space $E^*$ of an M-embedded Banach space $E$, $C$ contains a common fixed point for $S$.
\end{description}
\end{thm}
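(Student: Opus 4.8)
The plan is to reduce the fixed point property ($F_{*M}$) to the already-established property ($F_L$) of Theorem~\ref{discrete FL}, exploiting that $S$ is discrete and left reversible. To invoke that theorem I must supply three ingredients: that $C$ is an L-embedded convex set, that $C$ contains a nonempty bounded set $B$ with $T_s(B)=B$ for all $s$, and that each $T_s$ is weakly continuous on every weakly compact $S$-invariant convex subset of $C$.

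The first two ingredients come essentially for free. Since $C$ is weak* compact it is weak* closed, and as $E$ is M-embedded, Lemma~\ref{prop L-embdd} shows $C$ is L-embedded; $C$ is convex by hypothesis. For the invariant set, the representation $\Ss$ is weak* continuous on the weak* compact set $C$ and $S$ is discrete and left reversible, so Corollary~\ref{dct B} produces a weak* compact $B\subseteq C$ with $T_s(B)=B$ for all $s\in S$; being weak* compact in $E^*$, the set $B$ is norm bounded.

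The third ingredient is the crux, and is where I expect the only genuine work. Let $K$ be any weakly compact $S$-invariant convex subset of $C$. The key observation is that on $K$ the weak and weak* topologies of $E^*$ coincide: the weak* topology $\sigma(E^*,E)$ is coarser than the weak topology $\sigma(E^*,E^{**})$ because $E\subseteq E^{**}$, so the identity map $(K,\text{wk})\to(K,\text{wk*})$ is a continuous bijection from a compact space onto a Hausdorff space, hence a homeomorphism. Since $K$ is $S$-invariant, each $T_s$ maps $K$ into $K$, and $T_s|_K$ is weak*--weak* continuous as a restriction of the given weak* continuous map on $C$; transporting through the homeomorphism on both domain and range, $T_s|_K\colon(K,\text{wk})\to(K,\text{wk})$ is weakly continuous. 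This verifies the weak continuity hypothesis of Theorem~\ref{discrete FL} on exactly the sets where it is required.

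With all three hypotheses confirmed, Theorem~\ref{discrete FL} applies to $\Ss$ on $C$ and delivers a common fixed point for $S$ in $C$, which is precisely ($F_{*M}$). The main subtlety to watch is the third step: norm-nonexpansive maps are nonlinear and generally not weakly continuous, so one cannot argue pointwise or by linearity. The argument instead leverages the fact that the weak and weak* topologies agree on weakly compact subsets of a dual space, converting the hypothesized weak* continuity into the weak continuity demanded by Theorem~\ref{discrete FL}; note that M-embeddedness of $E$ enters only through Lemma~\ref{prop L-embdd} in establishing L-embeddedness, not in this continuity step.
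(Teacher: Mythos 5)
Your proposal is correct and follows essentially the same route as the paper: L-embeddedness of $C$ via Lemma~\ref{prop L-embdd}, the invariant bounded set $B$ via Corollary~\ref{dct B}, verification that each $T_s$ is weakly continuous on weakly compact $S$-invariant subsets, and then an appeal to Theorem~\ref{discrete FL}. The only (cosmetic) difference is in the continuity step, where the paper runs a net/subnet argument while you package the same fact as the coincidence of the weak and weak* topologies on a weakly compact set via the compact-to-Hausdorff homeomorphism; both are valid.
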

\begin{proof}
By Lemma~\ref{prop L-embdd}, $C$ is L-embedded. From Corollary~\ref{dct B} there is a nonempty weak* compact (and hence bounded) subset $B$ in $C$ such that $T_s(B) = B$ for all $s\in S$. In order that Theorem~\ref{discrete FL} can be applied here we show that every $T_s$ is indeed weakly continuous on each weakly compact $S$-invariant subset $K$ of $C$. Let $(x_\al)\subset K$ be such that $x_\al \overset{\text{wk}}{\to} x_0 \in K$. Then $T_s(x_\al) \overset{\text{wk*}}{\to}T_s( x_0)\subset K$ since $T_s$ is weak* continuous. Since the set $K$ is weakly compact, without loss of generality, we may assume $T_s(x_\al) \overset{\text{wk}}{\to}y_0\in K$. Then clearly, it is still true that $T_s(x_\al) \overset{\text{wk*}}{\to}y_0$. Thus $T_s(x_0) = y_0$ or $T_s(x_\al) \overset{\text{wk}}{\to}T_s( x_0)$ whenever $x_\al \overset{\text{wk}}{\to} x_0 \in K$. Therefore $T_s$ is weakly continuous when restricting to $K$. So we can finally conclude that $C$ contains a common fixed point for $S$ due to Theorem~\ref{discrete FL}.
\end{proof}

If $S$ is not discrete similar argument gives the following result.
\begin{thm}\label{F*Mj}
Let $S$ be a left reversible semitopological semigroup. Then $S$ has the following fixed point property.
\begin{description}
\item[($F_{*Mj}$)]\label{FMj} If $\Ss = \{T_s:\; s\in S\}$ is a norm nonexpansive representation of $S$ on a nonempty weak* compact convex set $C$ of the dual space $E^*$ of an M-embedded Banach space $E$ and the mapping $(s,x) \mapsto T_s(x)$ from $S\times C$ into $C$ is jointly continuous when $C$ is endowed with the weak* topology of $E^*$, then $C$ contains a common fixed point for $S$.
\end{description}
\end{thm}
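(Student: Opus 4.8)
The plan is to follow the proof of Theorem~\ref{F*M} almost verbatim, substituting the jointly-continuous semitopological tools for the discrete ones, and then to reduce everything to locating a common fixed point inside a weakly compact convex $S$-invariant set. First, since $C$ is weak* compact it is weak* closed in $E^*$, so Lemma~\ref{prop L-embdd} applies and $C$ is L-embedded. Next, because the action $(s,x)\mapsto T_s(x)$ is jointly weak* continuous and $S$ is left reversible, Corollary~\ref{wk* cpt B} furnishes a weak* compact (hence bounded) set $B\subset C$ with $T_s(B)=B$ for all $s\in S$; in particular $B\subset T_s(B)$. By Lemma~\ref{Chebyshev} the Chebyshev center $W=W_C(B)$ is then a nonempty weakly compact convex $S$-invariant subset of $C$. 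Finally, exactly as in the proof of Theorem~\ref{F*M}, I would check that each $T_s$ is weakly continuous on every weakly compact $S$-invariant set $K$: each $T_s$ is weak* continuous by the joint continuity hypothesis, so $x_\al\to x_0$ weakly forces $T_s(x_\al)\to T_s(x_0)$ weak*, and weak compactness of $K$ lets a weakly convergent subnet have its weak limit coincide with this weak* limit.

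With these reductions in hand the task becomes finding a common fixed point for $S$ on the weakly compact convex set $W$, and here I would rerun the minimal-set argument from the first part of the proof of Theorem~\ref{discrete FL}. Using Zorn's Lemma (legitimate because each $T_s$ is weakly continuous on $W$ and intersections of decreasing chains of weakly compact $S$-invariant sets are again nonempty, weakly compact and $S$-invariant), choose a minimal nonempty weakly compact convex $S$-invariant set $K\subset W$ and then a minimal nonempty weakly compact $S$-invariant set $F\subset K$. The crucial improvement over the discrete proof is that $T_s(F)=F$ now comes essentially for free: $F$ is weak* compact (a weakly compact subset of $E^*$ is weak* compact) and the restricted action stays jointly weak* continuous, so Corollary~\ref{wk* cpt B} produces a weak* compact $S$-invariant $B'\subset F$; since on the weakly compact set $F$ the weak and weak* topologies coincide, $B'$ is weakly compact, and minimality of $F$ gives $B'=F$, i.e. $T_s(F)=F$. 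Then \cite[Lemma~3.3]{L-Z} shows $F$ is norm compact, hence has normal structure, and if $F$ had more than one point the set $M=\{x\in K:\ \|x-y\|\le r_0\ \text{for all}\ y\in F\}$ (with $r_0$ supplied by normal structure) would be a nonempty weakly closed convex proper $S$-invariant subset of $K$, contradicting minimality. Thus $F$ is a singleton, a common fixed point of $S$ in $C$.

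The step I expect to be the main obstacle is the norm compactness of the minimal invariant set $F$, i.e. the invocation of \cite[Lemma~3.3]{L-Z}. In the discrete case that lemma was available only after reducing to countable $S$, which is precisely why the proof of Theorem~\ref{discrete FL} had to manufacture, for every finite $\al\subset S$, a countable left reversible subsemigroup $S_\al\supset\al$ and then patch the resulting fixed-point sets together by the finite intersection property over the directed family of finite subsets of $S$. Reproducing that patching here is delicate for a non-discrete, non-metrizable $S$, since the algebraic construction of the $S_\al$ used plain left reversibility $sS\cap tS\neq\emptyset$, whereas semitopological left reversibility only gives $\overline{sS}\cap\overline{tS}\neq\emptyset$. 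For metrizable $S$ this is repaired by strong left reversibility (\cite[Lemma~5.2]{L-Z}) as in Theorem~\ref{metrizable FL}; in general one would instead want to exploit the joint continuity together with the homogeneous description $F=\overline{Sb}^{\text{wk}}$ valid for every $b\in F$ (the Remark following Lemma~\ref{T(B)=B}) to force norm compactness of $F$ directly, and confirming that this delivers \cite[Lemma~3.3]{L-Z} without a separability hypothesis is where the real work lies.
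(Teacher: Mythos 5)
Your reconstruction is exactly the route the paper takes: the paper's entire proof of Theorem~\ref{F*Mj} is the sentence ``If $S$ is not discrete similar argument gives the following result,'' referring back to the proof of Theorem~\ref{F*M}, and your first paragraph reproduces that argument faithfully (Lemma~\ref{prop L-embdd} for L-embeddedness of the weak* closed set $C$, Corollary~\ref{wk* cpt B} for the weak* compact $B$ with $T_s(B)=B$, Lemma~\ref{Chebyshev} for the nonempty weakly compact convex $S$-invariant Chebyshev center, and the upgrade from weak* continuity to weak continuity on weakly compact invariant sets). Your adaptation of the minimal-set endgame is also sound as far as it goes, including the observations that weak and weak* topologies coincide on $W_C(B)$ and that $T_s(F)=F$ follows from Corollary~\ref{wk* cpt B} applied to $F$ together with minimality.

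The step you flag at the end is indeed the only substantive issue, and you should be aware that the paper offers no more detail there than you do: it does not say which fixed point theorem is being applied to $W_C(B)$ once $S$ is neither discrete nor metrizable. Your diagnosis of why this is delicate is accurate. The appeal to \cite[Lemma~3.3]{L-Z} for norm compactness of the minimal set $F$ is made in the proof of Theorem~\ref{discrete FL} only after reducing to countable $S$, and that reduction solves $s_1t_1=\cdots=s_nt_n$ exactly, which uses discrete left reversibility $sS\cap tS\neq\emptyset$; a general semitopological left reversible $S$ only satisfies $\overline{sS}\cap\overline{tS}\neq\emptyset$, and the paper's substitute (strong left reversibility) is established only for metrizable $S$ (Theorem~\ref{metrizable FL}). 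The authors' own surrounding remarks corroborate that separability is the live issue in this circle of ideas: Theorem~\ref{FL} assumes $S$ separable, the property ($F_{*s}$) in Section~\ref{open questions} carries a norm-separability hypothesis on $C$, and the paper states explicitly that it does not know whether the separability assumption in \cite[Theorem~3.4]{L-Z} is removable. So you have not missed an argument that the paper supplies; you have correctly located the point that the paper's one-line proof passes over, and completing it --- for instance by showing directly that the minimal set $F=\overline{Sb}^{\text{wk}}$ is norm compact, or norm separable, without any separability of $S$ --- is the genuine remaining work.
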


\begin{remark} 
Since the Banach space $K(H)$ of all compact operators on a Hilbert space $H$ is M-embedded, It follows that both theorems \ref{F*M} and \ref{F*Mj} apply to $K(H)^* = J(H)$, the space of trace operators on $H$.
\end{remark}

\section{Schauder's fixed point property for semigroups}\label{Schauder}

A semitopological semigroup $S$ is called \emph{extremely left amenable} (abbreviated ELA) if $LUC(S)$ has a multiplicative left invariant mean.  Mitchell showed in \cite{Mitch_LUC} that a semitopological semigroup $S$ is ELA if and only if it has the following fixed point property.

\begin{description}
\item[($F_E$)] Every jointly continuous representation of $S$ on a nonempty compact Hausdorff space has a common fixed point for $S$.
\end{description}

For a discrete semigroup $S$, the Stone-\v Cech compactification $\beta S$ of $S$ is a semigroup with the multiplication $\phi \psi = \lim_m \lim_n s_m t_n$ if $\phi = \lim_ms_m$ and $\psi=\lim_nt_n$ for $(s_m),(t_n)\subset S$. The semigroup $\beta S$ is  a compact right topological semigroup, that is, for each $\phi\in \beta S$, the map $\psi \mapsto \psi\phi$ is continuous. Furthermore, $\beta S$ can be identified with the set of multiplicative means of $\ell^\infty(S)$. The discrete semigroup $S$ is ELA if and only if $\beta S$ has a right zero element \cite{Pat}. Note that if any two elements in $S$ have a common right zero (that is  for any $a,b\in S$ there is $c\in S$ such that $ac = bc =c$), then any finite subset of $S$ will have a common right zero which implies that $S$ is ELA. Granirer showed that for a discrete semigroup $S$ the converse is also true \cite{Gran_ELA}; in particular, if $S$ is right cancellative, then $S$ is ELA only when $S$ is trivial. A locally compact group $G$ is ELA then $G$ is trivial \cite{Gran-Lau}. But there are many examples of extremely left amenable topological groups (see \cite{G-P}). For instance, if $G$ is the group of unitary operators on a separable Hilbert space with the strong operator topology, then $G$ is ELA. Also a von Neumann algebra $\mathcal N$ is amenable if and only if the group of all unitary elements in $\mathcal N$ is the direct product of a compact group and an extremely left amenable group \cite{G-P}. We first give a new proof to the above Granirer's theorem. Our proof is very short compare to Granirer's original proof. The following fundamental result was proved by M. Katetov \cite{Kate}: Let $D$ be a set and $T$: $D\to D$ be a mapping with no fixed point in $D$. Then $D$ can be partitioned into a union of three disjoint subsets $D= A_0\cup A_1\cup A_2$ such that $A_i\cap f(A_i) = \emptyset$, $i=0,1,2$. Using this we may derive the following lemma (see \cite[Theorem~3.34]{HS} and \cite[Proposition~1]{Kate}). 

\begin{lemma}\label{T}
Let $D$ be a set and $T$: $D\to D$ be a mapping with no fixed point in $D$. Then the extension $\overline T$ of $T$ to $\beta D$, the Stone -\v Cech compactification of $D$, has no fixed point in $\beta D$ either.
\end{lemma}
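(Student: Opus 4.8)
The plan is to realize $\beta D$ as the space of ultrafilters on the discrete set $D$, with $D$ embedded as the principal ultrafilters, and to describe the extension $\overline{T}$ as the pushforward of ultrafilters: for an ultrafilter $p$ on $D$,
\[ \overline{T}(p) = \{\, A \subseteq D :\ T^{-1}(A) \in p \,\}. \]
This is the unique continuous extension of $T$ to $\beta D$ (it matches the net description $\overline{T}(\lim_\al s_\al) = \lim_\al T(s_\al)$ compatible with the multiplication on $\beta S$ used elsewhere in the paper). Consequently, a point $p\in\beta D$ is a fixed point of $\overline{T}$ precisely when $A\in p \Leftrightarrow T^{-1}(A)\in p$ for every $A\subseteq D$.

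Next I would invoke Katetov's theorem, quoted just above the statement, to fix a partition $D = A_0 \cup A_1 \cup A_2$ into pairwise disjoint sets with $A_i \cap T(A_i) = \emptyset$ for $i=0,1,2$. Arguing by contradiction, suppose $p\in\beta D$ satisfies $\overline{T}(p)=p$. Since $D = A_0\cup A_1\cup A_2$ belongs to $p$ and the three pieces are pairwise disjoint, exactly one of them lies in $p$: at least one does, for otherwise all three complements lie in $p$ and their intersection $\emptyset$ would too; and at most one does, since two members of $p$ cannot be disjoint. Call the chosen piece $A_j$.

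Because $A_j\in p$ and $\overline{T}(p)=p$, the fixed-point characterization gives $T^{-1}(A_j)\in p$. As $p$ is closed under finite intersection and contains no empty set, $A_j\cap T^{-1}(A_j)\in p$ is nonempty, so I may pick $x\in A_j\cap T^{-1}(A_j)$. Then $x\in A_j$ forces $T(x)\in T(A_j)$, while $x\in T^{-1}(A_j)$ gives $T(x)\in A_j$; hence $T(x)\in A_j\cap T(A_j)$, contradicting the defining property of the partition. This contradiction shows $\overline{T}$ has no fixed point in $\beta D$.

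Once Katetov's partition is in hand the argument is essentially immediate, so I do not anticipate a genuine obstacle beyond that theorem itself, which I am assuming. The one place that demands care is the bookkeeping between the two equivalent descriptions of $\overline{T}$ — the pushforward of ultrafilters versus the net limit — and the verification that a finite disjoint cover of $D$ meets an ultrafilter in exactly one piece, both of which rest only on the ultrafilter axioms.
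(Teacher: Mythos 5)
Your proof is correct and follows essentially the route the paper intends: it derives the lemma from Katetov's partition $D=A_0\cup A_1\cup A_2$ exactly as in the cited references (the paper itself gives no written proof, only the pointer to Katetov's theorem and to \cite[Theorem~3.34]{HS}). The ultrafilter bookkeeping — that $\overline{T}$ is the pushforward $A\mapsto T^{-1}(A)\in p$, that a fixed ultrafilter contains exactly one $A_j$, and that $A_j\cap T^{-1}(A_j)\neq\emptyset$ then contradicts $A_j\cap T(A_j)=\emptyset$ — is all sound.
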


\begin{thm}[\cite{Gran_ELA}]
A discrete semigroup $S$ is ELA if and only if any two elements have a common right zero.
\end{thm}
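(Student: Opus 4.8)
The plan is to prove both implications, using the identification of $\beta S$ with the multiplicative means on $\ell^\infty(S)$ and the fact recalled above that $S$ is ELA exactly when $\beta S$ has a right zero. For the direction ``common right zeros $\Rightarrow$ ELA'' I would first upgrade the pairwise hypothesis to all finite subsets by induction. For finite $F=\{a_1,\dots,a_n\}$ write $Z_F=\{c\in S: a_ic=c \text{ for all } i\}$. If $c\in Z_F$ and $a_{n+1}\in S$, the pairwise hypothesis applied to $c$ and $a_{n+1}$ yields $d$ with $cd=d$ and $a_{n+1}d=d$; then $a_id=(a_ic)d=cd=d$, so $d\in Z_{F\cup\{a_{n+1}\}}$, proving every $Z_F$ nonempty. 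Since $Z_{F\cup G}\subseteq Z_F\cap Z_G$, the family $\{\overline{Z_F}^{\beta S}\}$ has the finite intersection property, and compactness of $\beta S$ produces $p\in\bigcap_F\overline{Z_F}$. For each $s\in S$ the map $\psi\mapsto s\psi$ is the continuous extension to $\beta S$ of left translation by $s$, and $sc=c$ on $Z_{\{s\}}$, so $sp=p$; then for arbitrary $q=\lim_\alpha s_\alpha\in\beta S$, right-continuity of multiplication gives $qp=\lim_\alpha s_\alpha p=p$. Thus $p$ is a right zero of $\beta S$ and $S$ is ELA.

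For the converse, suppose $S$ is ELA but, for contradiction, some pair $a,b\in S$ has no common right zero. Let $r\in\beta S$ be a right zero, so in particular $ar=r$ and $br=r$. The key step is to manufacture a single fixed-point-free self-map of $S$ out of the two translations. Partition $S=S_1\sqcup S_2$ with $S_1=\{c: ac\neq c\}$ and $S_2=\{c: ac=c\}$, and define $T:S\to S$ by $T(c)=ac$ for $c\in S_1$ and $T(c)=bc$ for $c\in S_2$. On $S_2$ we have $ac=c$, whence $bc\neq c$ by the no-common-right-zero assumption; combined with the definition on $S_1$, this shows $T$ has no fixed point. By Lemma~\ref{T}, its extension $\overline T$ to $\beta S$ has no fixed point either.

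I would then contradict this by evaluating $\overline T$ at $r$. The closures $\overline{S_1}^{\beta S}$ and $\overline{S_2}^{\beta S}$ are disjoint clopen sets partitioning $\beta S$, so $r$ lies in exactly one of them; on $\overline{S_i}$ the continuous map $\overline T$ coincides with the continuous extension of the relevant translation, since the two agree on the dense subset $S_i$. Hence $\overline T(r)=ar=r$ if $r\in\overline{S_1}$, and $\overline T(r)=br=r$ if $r\in\overline{S_2}$; either way $r$ is a fixed point of $\overline T$, the desired contradiction. Therefore $a,b$ must have a common right zero.

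I expect the main obstacle to be this converse direction, precisely because Lemma~\ref{T} addresses only a \emph{single} self-map while the conclusion concerns two elements at once. The crux is the piecewise definition of $T$: it must be genuinely fixed-point-free—which is what forces the case split according to whether $ac=c$—yet remain controllable under the Stone--\v Cech extension, and this is exactly what the clopen decomposition $\beta S=\overline{S_1}\sqcup\overline{S_2}$ supplies. The remaining ingredients, namely continuity of one-sided translations on $\beta S$ and the finite-intersection compactness argument, require care but present no real difficulty.
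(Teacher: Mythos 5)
Your proof is correct, but the necessity direction takes a genuinely different route from the paper's. Both arguments ultimately rest on Katetov's result (Lemma~\ref{T}), but the paper gets there indirectly: it first uses left reversibility of an ELA semigroup to show that $a\sim b \Leftrightarrow \exists x,\ ax=bx$ is a congruence, passes to the right cancellative quotient $S/\sim$ (still ELA as a homomorphic image), proves that a right cancellative ELA semigroup is trivial by applying Katetov to the \emph{single} translation $T(s)=as$ for a non-right-identity $a$, and only then manufactures the common right zero from $a\sim b$. You instead apply Katetov directly on $S$ to the piecewise map $T(c)=ac$ on $\{c: ac\neq c\}$ and $T(c)=bc$ on $\{c: ac=c\}$, which the no-common-right-zero hypothesis makes fixed-point free, and you defeat the conclusion by locating the right zero $r$ of $\beta S$ in one of the clopen pieces $\overline{S_1},\overline{S_2}$ where $\overline{T}$ agrees with a single extended translation fixing $r$. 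Your version is more self-contained (no quotient, no appeal to left reversibility) and handles the two-element obstruction head-on; the paper's version buys the extra structural facts that $\sim$ is a congruence and that right cancellative ELA semigroups are trivial. Your sufficiency argument (induction from pairs to finite sets, then compactness in $\beta S$) is exactly the observation the paper makes in the paragraph preceding the theorem, just written out in full.
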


\begin{proof}
We only need to show the necessity. If a discrete semigroup $S$ is ELA, then it must be left reversible. We define a relation ``$\sim$'' on $S$ by 
$a\sim b$ if there is $x\in S$ such that $ax=bx$. It is trivial that $a\sim a$ for each $a\in S$, and if $a\sim b$ then $b\sim a$. If $a\sim b$ and $b\sim c$, then there are $x,y\in S$ such that $ax = bx$ and $by = cy$. By the left reversity, there are $z_1, z_2\in S$ such that $xz_1 = yz_2=:w$. Then $aw=axz_1 = bxz_1 = byz_2 = cyz_2 = cw$ and hence $a\sim c$. Thus, $\sim$ is an equivalence relation on $S$. We define a multiplication on $S/\sim$ by letting $[a][b]=[ab]$. This is well-defined and makes $S/\sim$ a right cancellative semigroup. In fact, if $a\sim b$ and $c\in S$, we can assume $ax = bx$ and $xz= cy$. So $acy = axz= bxz=bcy$. This shows $ac\sim bc$ if $a\sim b$. Trivially, we also have $bc\sim bd$ if $c\sim d$. Thus, $ac\sim bd$ if $a\sim b$ and $c\sim d$. 

If $S$ is ELA, then so is $S/\sim$, as a homomorphic image of $S$. We show that any right cancellative ELA semigroup is trivial (i.e. contains only one element if it is not empty). This will imply that $a\sim b$ for any two elements $a, b\in S$ if $S$ is ELA. So there is $y\in S$ such that $ay=by$, and there is $z\in S$ such that $(ay)z = yz $. Let $c=yz$. We then have $ac = bc = c$, that is,  $c$ is a common right zero for $a$ and $b$.

Suppose that $S$ is right cancellative and ELA. If $S$ contains more than one element, then there is $a\in S$ which is not a right identity. Otherwise every element of $\beta S$ is right identity for $S$ and $S$ cannot be ELA. Define $T$: $S\to S$ by $T(s) = as$. Then $T$ has no fixed point in $S$ (if $as=s$ for some $s\in S$, then $bas=bs$. From the right cancellative law $ba=b$ for all $b\in S$). By Lemma~\ref{T}, $\overline T$: $\beta S \to \beta S$ has no fixed point. In particular, there is no $m\in \beta S$ such that $am=m$. So $S$ cannot be ELA.
\end{proof}

Related to the fixed point property $(F_E)$ we may consider the following Schauder's fixed point property for a semitopological semigroup $S$.
\begin{description}
\item[($F_S$)]\label{Fc} Every jointly continuous representation of $S$ on a nonempty compact convex set $C$ of a separated locally convex topological vector space has a common fixed point.
\end{description}

From the definition, ($F_S$) is weaker than ($F_E$), that is,  every ELA semigroup has fpp ($F_S)$. On the other hand, many semigroups has ($F_S$) without being ELA.

\begin{example}
The well-known Schauder's Fixed Point Theorem can be stated as: the free commutative (discrete) semigroup on one generator has the fixed point property ($F_S$).
\end{example}

\begin{example} A cyclic group (finite or non-finite) has the fixed point property ($F_S$).
\end{example}

\begin{proof} Let $g$ be the generator. Then $F(g)\neq \emptyset$ and $F(g^{-1}) = F(g)$.
\end{proof}

\begin{example}\label{FS2} Commutative free semigroup on $n\geq 2$ generators does not have ($F_S$).
\end{example}

\begin{proof} It suffices to show the case $n=2$.  In this case the semigroup is isomorphic to $N_0\times N_0$, where $N_0$ is the additive semigroup of non-negative integers $\{0,1,2,\cdots\}$ (which is the free commutative semigroup on one generator). It is known that there are two continuous functions $f$ and $g$ mapping the unit interval $[0,1]$ into itself which commute under the function composition but do not have any common fixed point in $[0,1]$ \cite{boyce}.  We consider the representation of $N_0\times N_0$ on $[0,1]$ defined by $T_{(0,0)} = id_{[0,1]}$, $T_{(1,0)} = f$ and $T_{(0,1)} = g$. Then $N_0\times N_0$ has no common fixed point on $[0,1]$ although $N_0$ has the fpp ($F_S$). 
\end{proof}

\begin{example}\label{Q}
The additive semigroup $(\Q_+,+)$ has the fpp $(F_S)$, so does the additive group $(\Q,+)$ where $\Q$ is the set of all rational numbers and  $\Q_+$ represents the set of all positive rational numbers.  
\end{example}

\begin{proof} We only show the semigroup case. The group case is similar. If $\Q_+$ acts on a compact convex set $K$, then the fixed point set $F(s)$ is not empty for each $s\in \Q_+$. Clearly, if $s = nt$ for some integer $n$, then $F(t) \subset F(s)$. Let $s_1, s_2, \cdots s_k$ be finite elements of $\Q_+$. We can write $s_i=\frac{m_i}{n_i}$ ($i=1, 2, \cdots, k$), where $n_i$ and $m_i$ are integers. Let $s=\frac{1}{n_1 n_2 \cdots n_k}$. Then all $s_i$ are multiples of $s$. So $F(s)\subset F(s_i)$ for all $i$. Therefore the collection $\Gamma =\{F(s) :\, s\in \Q_+\}$ of compact subsets of $K$ has the finite intersection property. Thus $\cap_{s\in \Q_+}F(s) \neq \emptyset$, showing that $\Q_+$ has a common fixed point in $K$.
\end{proof}

\begin{example}
Recall that a semilattice is a commutative semigroup of idempotents. Any semilattice has the fpp $(F_S)$. In fact, a semilattice is extremely left amenable and hence has the fpp $(F_E)$. 
\end{example}
\begin{proof} Let $S$ be a semilattice. For any finite elements $s_1, s_2, \cdots, s_k\in S$, let $s=s_1 s_2 \cdots s_k$. Then $s_is = s$ and so $F(s)\subset F(s_i)$ for each $i$. Therefore $\{F(s):\, s\in S\}$ has finite intersection property.
\end{proof}

\begin{example}\label{bicyclic}
The bicyclic semigroup, denoted by $S_1 = \la a, b \,|\; ab = e \ra$, is a semigroup generated by a unit $e$ and two more elements $a$ and $b$ subject to the relation $ab =e$. This semigroup is not ELA but has the fpp ($F_S$). 
\end{example}

\begin{proof} In fact, elements of $S$ are of the form $s=b^na^m$ for some integers $n\geq 0$ and $m\geq 0$. If $bs=s$, then $b^{n+1}a^m = b^na^m$. Multiply $a^n$ to the left and $b^m$ to the right of the two sides of the last equation. we then have $b=e$, which is a contradiction. So $S^1$ is not ELA.

Let $\Ss$ be a continuous representation of $S_1$ on a nonempty compact convex set $K$. From Schauder's fixed point theorem, $K$ has a fixed point, say $x$, for $b$.  Then $b(ex) = (be)x = (eb)x =e(bx) = ex$, $e(ex) = e^2x = ex$ and $a(ex) = a(bex) = (ab)ex =e^2x = ex$. Therefore, $ex$ is a common fixed point in $K$ for the generators $a$, $b$ and $e$. This shows that $ex$ is a common fixed point for $S_1$.
\end{proof}

\begin{remark} The proof in fact shows that $F(S_1) = eF(b) = F(b)$. Since the fixed point property ($F_S$) implies that $S$ is left amenable by the next proposition, we obtain another proof for the left amenability of $S_1$. Other proofs can be found in \cite{D-N} and \cite{L-Z}.
\end{remark}

\begin{prop}\label{Fc LIM}
If a semitopological semigroup $S$ has the fpp ($F_S$), then $LUC(S)$ has a left invariant mean. In particular, if $S$ is a discrete semigroup with the fpp ($F_S$), then $S$ is left amenable.
\end{prop}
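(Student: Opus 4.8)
The plan is to derive a left invariant mean on $LUC(S)$ directly from the fixed point property $(F_S)$ by producing a suitable compact convex set on which $S$ acts and identifying the fixed points with left invariant means. The natural candidate is the set $M(LUC(S))$ of all means on $LUC(S)$, which is a weak* compact convex subset of $LUC(S)^*$, and on which $S$ acts via the adjoints $\ell_s^*$ of the left translation operators. A mean $m$ fixed by every $\ell_s^*$ satisfies $m(\ell_s f) = (\ell_s^* m)(f) = m(f)$ for all $s \in S$ and $f \in LUC(S)$, which is exactly the statement that $m$ is a left invariant mean. So the entire burden is to check that this action satisfies the hypotheses of $(F_S)$, namely that it is a jointly continuous representation on a compact convex set in a separated locally convex space.

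The key steps, in order, are as follows. First I would equip $LUC(S)^*$ with the weak* topology, under which $M(LUC(S))$ is compact (by Banach--Alaoglu, since means lie in the unit ball) and convex, and the ambient space $(LUC(S)^*, \text{wk*})$ is a separated locally convex topological vector space. Second, I would verify that each $\ell_s^*$ maps $M(LUC(S))$ into itself: since $\ell_s$ is a positive unital linear map on $LUC(S)$, its adjoint sends means to means. Each $\ell_s^*$ is weak*-weak* continuous (being an adjoint of a bounded operator) and the assignment $s \mapsto \ell_s^*$ is an anti/representation that one checks respects the semigroup multiplication in the required sense. Third, and this is where the hypothesis $LUC(S)$ rather than $\ell^\infty(S)$ is essential, I would establish \emph{joint} continuity of the map $(s,m) \mapsto \ell_s^* m$ from $S \times M(LUC(S))$ into $M(LUC(S))$. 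The defining property of $LUC(S)$ is precisely that $s \mapsto \ell_s f$ is norm continuous from $S$ into $C_b(S)$ for each $f \in LUC(S)$; combining this uniform-in-$m$ norm control on $\ell_s f$ with the weak* convergence of $m_\alpha \to m$ yields the joint continuity of the evaluation $(s,m) \mapsto m(\ell_s f)$, and hence of the action.

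Once joint continuity is in hand, I would apply $(F_S)$ to conclude that $S$ has a common fixed point $m_0 \in M(LUC(S))$, which is the desired left invariant mean. The main obstacle I anticipate is the joint-continuity verification in the third step: one must carefully exploit the left uniform continuity to bound $m_\alpha(\ell_{s_\beta} f) - m(\ell_s f)$ by splitting it as $m_\alpha(\ell_{s_\beta} f - \ell_s f) + (m_\alpha - m)(\ell_s f)$, controlling the first term via $\|m_\alpha\| \le 1$ and the norm continuity $\|\ell_{s_\beta} f - \ell_s f\| \to 0$, and the second via the weak* convergence $m_\alpha \to m$. This is exactly the place where the representation on $\ell^\infty(S)$ would fail but the one on $LUC(S)$ succeeds, which is why the conclusion is phrased for $LUC(S)$. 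The final ``in particular'' statement is immediate, since for a discrete semigroup $LUC(S) = \ell^\infty(S)$, so a left invariant mean on $LUC(S)$ is by definition left amenability of $S$.
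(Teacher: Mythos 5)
Your proposal is correct and follows essentially the same route as the paper: act on the weak* compact convex set of means on $LUC(S)$ via the adjoints $\ell_s^*$ of left translation, check joint weak* continuity of the action (which is exactly where left uniform continuity is used), and apply $(F_S)$ to obtain a common fixed point, which is a left invariant mean. The paper's proof is just a terser version of this; your explicit splitting argument for joint continuity fills in the detail the paper leaves unstated.
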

\begin{proof}
Let $M(S)$ be the set of all means on $LUC(S)$. Then $M(S)$ is weak* compact convex subset of $LUC(S)^*$ which is a separated locally convex space with the weak* topology. Consider the left translation operator $\ell_s$ on $LUC(S)$. Then its dual mapping $\ell_s^*$ defines a jointly (weak*) continuous representation of $S$ on $M(S)$. The common fixed point of this representation gives a left invariant mean on $LUC(S)$. 
\end{proof}

\begin{example}
Let $S$ be a semitopological semigroup. If $LUC(S)$ has a left invariant mean that is in the convex hull of $\Delta(S)$, the spectrum of $LUC(S)$, then $S$ has the fpp ($F_S$). In fact, by \cite[Theorem 2.2]{Lau-Trans 152} (see also \cite{Lau-Trans 148}), there is a finite set $F\subset C$ such that $s(F) = F$ for all $s\in S$. Let $F =\{x_1,x_2, \cdots, x_n\}$. Then the element $\frac{1}{n}\sum_{i=1}^n{x_i}\in C$ is a common fixed point in $C$.
\end{example}

Consider  $S_2 = \la e, a, b, c \;|\; ab = e, ac = e\ra$, the semigroup generated by the unit $e$ and three more elements $a, b$ and $c$ subject to the relations $ab =ac = e$. We know that $S_2$ is not left amenable. So it does not have the fpp ($F_S$). Similarly, the partially bicyclic semigroups $S_{1,1}$, generated by a unit $e$ and four more elements $a, b, c$ and $d$ subject to the relations $ab = e, cd = e$, does not have the fpp ($F_S$). It is worth to mention that $WAP(S_2)$ has a LIM and $AP(S_{1,1})$ has a LIM (\cite{L-Z}).

One can always associate a unit $e$ to a non-unital semigroup $S$ to get a unitization semigroup $S^e$ of $S$. If $S$ is a semitopological semigroup, then $S^e$ is still a semitopological semigroup whose topology extends that of $S$ by adding $\{e\}$ to the open set base of $S$.

\begin{prop} A semitopological semigroup $S$ has ($F_S$) if and only if $S^e$ does.
\end{prop}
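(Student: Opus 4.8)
The plan is to establish the two implications separately, exploiting one structural fact about $S^e$: by the definition of its topology, the adjoined unit $e$ is an isolated point, so that $S^e = S \sqcup \{e\}$ as a topological sum (both $S$ and $\{e\}$ are open in $S^e$, and the subspace topology $S^e$ induces on $S$ is the original one). Throughout I would use that in any representation $\Ss = \{T_s : s\in S^e\}$ of $S^e$ the operator $T_e$ is a two-sided identity for the family $\{T_s : s\in S^e\}$, that is $T_e T_s = T_{es} = T_s = T_{se} = T_s T_e$, and in particular $T_e$ is idempotent.

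For the implication ``$S^e$ has ($F_S$) $\Rightarrow$ $S$ has ($F_S$)'', I would begin with an arbitrary jointly continuous representation $\Ss = \{T_s : s\in S\}$ of $S$ on a nonempty compact convex set $C$ and extend it to $S^e$ by setting $T_e = \mathrm{id}_C$. The relations $T_{ev} = T_v = \mathrm{id}_C \circ T_v$ and $T_{ve} = T_v = T_v \circ \mathrm{id}_C$ show this is a bona fide representation of $S^e$. For joint continuity, I would note that $S\times C$ and $\{e\}\times C$ are disjoint open sets covering $S^e\times C$, so it suffices to check continuity on each piece: on $S\times C$ the map is the original, jointly continuous, one, while on $\{e\}\times C$ it is simply $(e,x)\mapsto x$. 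Then ($F_S$) for $S^e$ produces a common fixed point $x$ for $S^e$, which is \emph{a fortiori} a common fixed point for $S$.

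For the converse ``$S$ has ($F_S$) $\Rightarrow$ $S^e$ has ($F_S$)'', I would take a jointly continuous representation $\Ss = \{T_s : s\in S^e\}$ of $S^e$ on a nonempty compact convex $C$, restrict it to $S$ (the restriction remains jointly continuous since $S\times C$ is a topological subspace of $S^e\times C$), and invoke ($F_S$) for $S$ to obtain a nonempty fixed point set $F(S)$. The crucial observation is that $F(S)$ is automatically contained in the fixed point set of $T_e$: fixing any $s\in S$, for $x\in F(S)$ one computes $T_e x = T_e(T_s x) = T_{es} x = T_s x = x$. Hence $F(S^e) = F(S)\neq\emptyset$.

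Neither direction is a genuine obstacle; the equivalence really reflects that adjoining an isolated identity is harmless. The only points that need care are the verification of joint continuity of the extended representation in the first implication, which reduces entirely to the isolation of $e$, and the short identity computation $T_e x = T_s x = x$ in the second, which silently assumes $S$ is nonempty. Should one wish to permit the empty semigroup, the degenerate case $S^e = \{e\}$ must be treated on its own, but there every point of the nonempty image $T_e(C)$ is fixed by the idempotent $T_e$, so ($F_S$) holds trivially.
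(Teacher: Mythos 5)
Your proof is correct and follows essentially the same route as the paper's: extend a representation of $S$ to $S^e$ by letting $e$ act as the identity for one direction, and for the other restrict to $S$ and use the unit relation to see that the common fixed points of $S$ are already fixed by $T_e$ (the paper phrases this as $F(S^e)=eF(S)$, which your computation sharpens to $F(S^e)=F(S)$). Your explicit verification of joint continuity via the isolation of $e$, and the remark about the empty semigroup, are details the paper leaves implicit.
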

\begin{proof} The ``if'' part is trivial since from a representation of $S$ one can always get a representation of $S^e$ by representing $e$ as the identity mapping. The converse is also clear if one notices $F(S^e) = eF(S)$.
\end{proof}

\begin{prop}\label{homomorph}
Suppose that there is a semigroup homomorphism $\gamma$: $S \overset{\rm{onto}}{\to} R$, then $R$ has ($F_S$) (resp. ($F_E$)) if $S$ does.
\end{prop}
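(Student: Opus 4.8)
The plan is to pull back an arbitrary representation of $R$ along $\gamma$ to obtain a representation of $S$, apply the fixed point property assumed for $S$, and then exploit surjectivity of $\gamma$ to promote the resulting fixed point to a common fixed point for all of $R$. Concretely, suppose $R$ has a jointly continuous representation $\Ss = \{T_r:\, r\in R\}$ on a nonempty compact convex set $C$ of a separated locally convex space (for $(F_E)$, on a nonempty compact Hausdorff space). First I would set $U_s := T_{\gamma(s)}$ for each $s\in S$. Since $\gamma$ is a homomorphism, $U_{st} = T_{\gamma(st)} = T_{\gamma(s)\gamma(t)} = T_{\gamma(s)}\circ T_{\gamma(t)} = U_s\circ U_t$, so $\{U_s:\, s\in S\}$ is indeed a representation of $S$ on the same set $C$.

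Next I must check that this pulled-back representation is jointly continuous, so that the hypothesis on $S$ actually applies. Writing the map $(s,x)\mapsto U_s(x)$ as the composite $(s,x)\mapsto(\gamma(s),x)\mapsto T_{\gamma(s)}(x)$, joint continuity follows from continuity of $\gamma$ together with the joint continuity of $\Ss$. Once this is in hand, property $(F_S)$ (respectively $(F_E)$) for $S$ produces a point $x_0$ with $T_{\gamma(s)}(x_0) = U_s(x_0) = x_0$ for every $s\in S$. Because $\gamma$ is onto, each $r\in R$ is of the form $\gamma(s)$ for some $s\in S$, whence $T_r(x_0) = x_0$ for all $r\in R$; that is, $x_0\in F(R)$, and $R$ has the fixed point property. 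The $(F_E)$ case is identical, with $C$ replaced by a compact Hausdorff space $K$ and convexity playing no role.

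The only genuine obstacle is the joint-continuity verification in the second step, which rests on $\gamma$ being a \emph{continuous} homomorphism of semitopological semigroups; without continuity of $\gamma$ the composite $(s,x)\mapsto T_{\gamma(s)}(x)$ need not be jointly continuous and the hypothesis on $S$ could not be invoked. This point is automatic when $S$ is discrete, and for general semitopological semigroups it should be read as part of the meaning of ``semigroup homomorphism'' in this topological setting. All remaining steps—the homomorphism identity for $U_{st}$ and the surjectivity argument transferring the fixed point from $S$ to $R$—are purely algebraic and routine.
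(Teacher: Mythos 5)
Your argument is correct and is exactly the pull-back-along-$\gamma$ argument the paper has in mind (the paper omits the proof entirely, marking it with \qed). Your caveat that $\gamma$ must be continuous for the pulled-back representation to remain jointly continuous is well taken and is indeed implicit in the paper, whose applications of this proposition are to discrete semigroups where continuity is automatic.
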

\qed

Note that the free group on two generators is not left amenable and is a homomorphism image of $S_{1,1}$. So again we derive that $S_{1,1}$ does not have ($F_S$) from the above proposition.

From Proposition~\ref{homomorph} it is clear that if the product semigroup $S\times R$ has the fpp ($F_S$), then both term semigroups $S$ and $R$ must have ($F_S$). But the converse is not true. A counterexample is $N_0\times N_0$. As we have seen, it does not have the fpp ($F_S)$ although $N_0$ does. However, a partial converse is true.

\begin{prop} Let $S$ and $R$ be unital semitopological semigroup.
If $S$ is ELA and $R$ has ($F_S$), then $S\times R$ has ($F_S$).
\end{prop}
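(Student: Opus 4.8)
The plan is to combine the extreme left amenability of $S$ with the Schauder-type fixed point property of $R$ by a two-stage fixed point argument, using the fact that both semigroups are unital. Suppose $\Ss = \{T_{(s,t)} : (s,t) \in S\times R\}$ is a jointly continuous representation of $S\times R$ on a nonempty compact convex set $C$ in a separated locally convex space. Writing $e_S$ and $e_R$ for the units, I would first restrict attention to the action of $S$ alone via the maps $U_s := T_{(s,e_R)}$ and the action of $R$ alone via $V_t := T_{(e_R\text{'s partner})}$—more precisely $V_t := T_{(e_S,t)}$. Because the representation is of the product semigroup and $S$, $R$ commute inside $S\times R$ (indeed $(s,e_R)(e_S,t) = (s,t) = (e_S,t)(s,e_R)$), the two families $\{U_s\}$ and $\{V_t\}$ commute: $U_s V_t = V_t U_s$ for all $s\in S$, $t\in R$.

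The key step is to apply the $R$-action first. Since $R$ has the property ($F_S$), the set $F(R) := \{x\in C : V_t(x) = x \text{ for all } t\in R\}$ of common fixed points of the $R$-action is nonempty. I claim $F(R)$ is a compact convex set that is invariant under each $U_s$. Convexity and compactness follow because each $V_t$ is affine? — no, the $V_t$ need not be affine, but $F(R)$ is still closed (hence compact) as an intersection of the closed sets $\{x : V_t x = x\}$, and it is convex provided the $V_t$ are affine, which is not assumed. This is the delicate point: without affineness, $F(R)$ may fail to be convex, so ($F_E$)/($F_S$) cannot be reapplied directly to the $S$-action on $F(R)$. The resolution is to use the full strength of extreme left amenability of $S$ via property ($F_E$): the $U_s$-invariance of $F(R)$ follows from commutativity, since for $x\in F(R)$ and any $t$ we have $V_t(U_s x) = U_s(V_t x) = U_s x$, so $U_s x \in F(R)$. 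Then $\{U_s\}$ gives a jointly continuous representation of the ELA semigroup $S$ on the nonempty compact (Hausdorff) space $F(R)$, and property ($F_E$) (Mitchell's characterization) yields a point $x_0 \in F(R)$ fixed by every $U_s$.

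Finally I would verify that $x_0$ is a common fixed point for the whole of $S\times R$. We have $V_t(x_0) = x_0$ for all $t$ (since $x_0\in F(R)$) and $U_s(x_0) = x_0$ for all $s$. For a general element, $T_{(s,t)}(x_0) = T_{(s,e_R)}T_{(e_S,t)}(x_0) = U_s(V_t(x_0)) = U_s(x_0) = x_0$, using the homomorphism property $T_{(s,t)} = T_{(s,e_R)(e_S,t)} = T_{(s,e_R)}\circ T_{(e_S,t)}$ and the units. Hence $x_0 \in F(S\times R)$, proving ($F_S$) for the product.

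The main obstacle, as indicated, is that the Schauder property ($F_S$) for $R$ produces only a \emph{compact} fixed point set $F(R)$, not a compact \emph{convex} one, so one cannot in general re-run a convex fixed point argument for $S$ on $F(R)$. The crux of the proof is therefore the choice to handle $R$ first (using ($F_S$) on the convex set $C$) and $S$ second (using the stronger property ($F_E$), which only requires a compact Hausdorff space, on the possibly non-convex set $F(R)$); reversing the roles would not work, which is exactly why the hypotheses are asymmetric—$S$ must be ELA while $R$ need only have ($F_S$). I would double-check that $F(R)$ is genuinely nonempty and $\tau$-compact and that joint continuity of the original representation descends to joint continuity of the restricted $S$-action on $F(R)$, which is immediate since $F(R)$ carries the subspace topology.
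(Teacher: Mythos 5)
Your proposal is correct and follows essentially the same route as the paper: apply ($F_S$) for $R$ on the compact convex set $C$ to get the nonempty compact set $F(R)$, observe it is $S$-invariant by commutativity of $(s,e_R)$ and $(e_S,t)$, and then invoke Mitchell's characterization ($F_E$) of extreme left amenability of $S$ on the compact Hausdorff (not necessarily convex) space $F(R)$. Your explicit remark about why the roles of $S$ and $R$ cannot be interchanged --- $F(R)$ need not be convex, so only the ($F_E$) property of the ELA factor can be applied to it --- is precisely the point implicit in the paper's argument.
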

\begin{proof}
Let $e_S$ and $e_R$ be units of $S$ and $R$ respectively.  Let $\Ss\R =\{T_{(s,t)} :\; (s,t)\in S\times R\}$ be a jointly continuous representation of $S\times R$ on $K$, a compact convex subset of a locally convex space $E$. Then $\Ss =\{T_s = T_{(s,e_R)}: \; s\in S\}$ and $\R =\{T_t = T_{(e_S,t)}: \; t\in R\}$ are jointly continuous representations of $S$ and $R$, respectively. $R$ has ($F_S$). So $F(R) \ne \emptyset$ and is compact. Now for each $s\in S$, $sF(R) = T_sF(R) \subset F(R)$ since $T_s$ commutes with every $T_t$. Hence $F(R)$ is $S$-invariant. But $S$ is ELA and ($F_E$) holds for $S$. Therefore, there is $x\in F(R)$ Such that $T_sx = x$ for all $s\in S$. This shows that $x$ is a common fixed point for $S\times R$.
\end{proof}
 \begin{prop}\label{dense}
 If a semitopological semigroup $S$ has a dense subsemigroup that has the fpp $(F_S)$, then $S$ has the fpp $(F_S)$.
 \end{prop}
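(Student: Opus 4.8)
The plan is to exploit the joint continuity of the representation together with the density of the subsemigroup, passing the fixed-point equation to a limit along a net.

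First I would let $S_0$ denote a dense subsemigroup of $S$ enjoying $(F_S)$, and let $\Ss=\{T_s:\,s\in S\}$ be an arbitrary jointly continuous representation of $S$ on a nonempty compact convex subset $C$ of a separated locally convex space $E$. Restricting the action to $S_0$, the family $\{T_s:\,s\in S_0\}$ is a jointly continuous representation of the semitopological semigroup $S_0$ (equipped with the subspace topology) on the same set $C$, since the restriction of a jointly continuous map to $S_0\times C$ remains jointly continuous. As $S_0$ has $(F_S)$, the fixed point set $F(S_0)=\{x\in C:\,T_s(x)=x\text{ for all }s\in S_0\}$ is nonempty.

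Next I would show that any $x\in F(S_0)$ is in fact fixed by all of $S$, which finishes the proof. Fix such an $x$ and let $t\in S$ be arbitrary. Because $S_0$ is dense in $S$, I would choose a net $(s_\al)\subset S_0$ with $s_\al\to t$ in $S$. The joint continuity of the representation yields in particular the continuity of the map $s\mapsto T_s(x)$, so $T_{s_\al}(x)\to T_t(x)$ in $C$. But $s_\al\in S_0$ and $x\in F(S_0)$ force $T_{s_\al}(x)=x$ for every $\al$, so the net is constantly equal to $x$; since the ambient topology is Hausdorff, its limit is unique and equals $x$, whence $T_t(x)=x$. As $t\in S$ was arbitrary, $x$ is a common fixed point for $S$ in $C$, so $S$ has $(F_S)$. (In passing this shows $F(S)=F(S_0)$, the inclusion $F(S)\subseteq F(S_0)$ being trivial.)

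The argument is essentially immediate once the set-up is in place; the only point requiring care is the passage to the limit, which relies solely on the separate continuity of $s\mapsto T_s(x)$ extracted from joint continuity, together with the fact that $C$ is Hausdorff so that the constant net $(x)$ has $x$ as its unique limit. Compactness and convexity of $C$ play no direct role beyond the hypothesis of $(F_S)$ for $S_0$; they enter only insofar as they guarantee $F(S_0)\neq\emptyset$.
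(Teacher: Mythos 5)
Your proof is correct and is precisely the argument the paper has in mind: the proposition is stated with the proof omitted (marked only by a \qed) as immediate, and the intended reasoning is exactly your restriction-to-$S_0$ followed by a density-plus-continuity passage to the limit, using that $s\mapsto T_s(x)$ is continuous and $C$ is Hausdorff.
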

 \qed

 From this proposition and Example~\ref{Q}, we immediately have the following.
 
 \begin{example}
 The additive semigroup $(\mathbb R_+,+)$ and the additive group $(\mathbb R, +)$ both have the fpp $(F_S)$.
 \end{example}
\qed
 
 \begin{example}
 The multiplicative group $(\mathbb R_+,\cdot)$ has the fpp $(F_S)$.
 \end{example}
 
 \begin{proof}
 $(\mathbb R_+,\cdot)$ is isomorphic to $(\mathbb R,+)$ under the isomorphism $r\mapsto \ln r$.
 \end{proof}
 
 For a prime number $p$ the field of $p$-adic numbers, denoted by $\Q_p$, is the completion of the rational numbers $\Q$ with respect to $p$-adic norm defined by $|r|_p=p^{-m}$ if $r\in \Q$ is (uniquely) written as $r = p^mq$ where $m\in \mathbb Z$ and $q$ is a rational number whose numerator and denominator are not divisible by $p$. $(\Q_p, +)$ is a commutative locally compact group with dense subset $\Q$. So by Proposition~\ref{dense} we have the following.
 
 \begin{example}
 The additive locally compact topological group of $p$-adic numbers $(\mathbb Q_p,+)$ has the fpp $(F_S)$.
 \end{example}
 \qed
 
 Since the compact group of the unit circle is isomorphic to $\mathbb R/\mathbb Z$, we then have
 
 \begin{example}
The unit circle has the fpp $(F_S)$.
 \end{example}
 \qed
 
 Similarly, the compact $p$-adic group has the fpp $(F_S)$.
 
 It is known that a locally compact group with a dense subgroup isomorphic to the cyclic group is either the cyclic group or a compact group (\cite[9.2]{HR}). This group has the fpp $(F_S)$.
 
Although $S_2$ does not have the fpp ($F_S$). It has a weaker version of ($F_S$). We recall that a representation $\Ss$ of a semigroup $S$ on a closed subset $K$ of a locally convex space is quasi-equicontinuous if the closure $\overline{\Ss}$ of $\Ss$ in the product space $K^K$ contains only continuous mappings \cite{L-Z}.

Let $\beta S$ be the Stone-\v Cech compactification of $S$. Then every representation of $S$ on any compact Hausdorff space $K$ can be extended to a representation of $\beta S$ on $K$. If the $S$-representation is equicontinuous or quasi-equicontinuous, then so is the extension. We therefore have the following result.

\begin{prop}
A semigroup $S$ has either of the following fixed point property if and only if $\beta S$ does.
\begin{description}
\item[($F_{ec}$)]\label{Fec} Every equicontinuous representation of $S$ on a nonempty compact convex set $K$ of a locally convex topological space E has a common fixed point in $K$.
\end{description}
\begin{description}
\item[($F_{qec}$)]\label{Fqec} Every quasi equicontinuous representation of $S$ on a nonempty compact convex set $K$ of a locally convex topological space E has a common fixed point in $K$.
\end{description}
\end{prop}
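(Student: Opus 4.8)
The plan is to exploit the correspondence, recalled just before the statement, between representations of $S$ on a compact Hausdorff space $K$ and representations of $\beta S$ on $K$: every representation $\Ss = \{T_s : s \in S\}$ of $S$ extends to a representation $\{T_\phi : \phi \in \beta S\}$ of $\beta S$, and conversely any representation of $\beta S$ restricts to one of $S$ by viewing $S$ as the dense set of principal ultrafilters in $\beta S$. First I would isolate the single fact that drives both implications: for any such mutually corresponding pair of representations the fixed point sets coincide, $F(S) = F(\beta S)$.

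One inclusion, $F(\beta S) \subseteq F(S)$, is immediate from $S \subseteq \beta S$. For the reverse I would take $x \in F(S)$ and an arbitrary $\phi \in \beta S$, write $\phi = \lim_\al s_\al$ for a net $(s_\al) \subset S$, and invoke the fact that the extended action satisfies $T_\phi(x) = \lim_\al T_{s_\al}(x)$, i.e. that $\psi \mapsto T_\psi(x)$ is the continuous extension to $\beta S$ of $s \mapsto T_s(x)$. Since $T_{s_\al}(x) = x$ for every $\al$, this limit is $x$, so $T_\phi(x) = x$ and $x \in F(\beta S)$. Hence $F(S) = F(\beta S)$ for every representation together with its $\beta S$-extension.

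With this identity in hand, both directions follow by transporting the hypotheses along the correspondence, provided the relevant equicontinuity conditions survive. For the implication ``$\beta S$ has the property $\Rightarrow$ $S$ has it'': given a (quasi-)equicontinuous representation of $S$, its extension to $\beta S$ is again (quasi-)equicontinuous by the fact quoted before the statement, and the fixed point property of $\beta S$ produces a point in $F(\beta S) = F(S)$. For the converse ``$S$ has the property $\Rightarrow$ $\beta S$ has it'': given a (quasi-)equicontinuous representation of $\beta S$, I would restrict it to $S$ and note the restriction is still (quasi-)equicontinuous --- equicontinuity passes trivially to subfamilies, and in the quasi-equicontinuous case the closure of $\{T_s : s \in S\}$ in $K^K$ is contained in the closure of $\{T_\phi : \phi \in \beta S\}$, which consists of continuous maps; the fixed point property of $S$ then yields a point in $F(S) = F(\beta S)$.

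The one delicate point I would flag is the identity $T_\phi(x) = \lim_\al T_{s_\al}(x)$ underlying $F(S) = F(\beta S)$: this is precisely the sense in which the $\beta S$-action extends the $S$-action, namely that each orbit map $\psi \mapsto T_\psi(x)$ is continuous on $\beta S$, so that the closed (here constant) condition ``$T_\psi(x) = x$'' propagates from the dense subsemigroup $S$ to all of $\beta S$. Everything else is a routine transfer of hypotheses, so the heart of the matter is simply that a point fixed by $S$ is automatically fixed by $\beta S$, by density of $S$ and continuity of the orbit maps.
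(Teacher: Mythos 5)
Your proposal follows exactly the route the paper intends: the paper prints no argument for this proposition beyond the remark immediately preceding it (every representation of $S$ on a compact Hausdorff space extends to one of $\beta S$, and equicontinuity or quasi-equicontinuity is preserved), and your write-up is the natural fleshing-out of that remark. The direction ``$\beta S$ has the property $\Rightarrow$ $S$ has it'' is handled correctly: extend the given $S$-representation canonically, check the extension is again (quasi-)equicontinuous, and note that a common fixed point for $\beta S$ is in particular one for $S$ (here one only needs $F(\beta S)\subseteq F(S)$, which is trivial).

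The gap is in the converse direction. Your identity $F(S)=F(\beta S)$ is established only for an $S$-representation \emph{together with its canonical extension}, where $T_\phi(x)=\lim_\al T_{s_\al}(x)$ holds by construction. In the converse direction you start from an arbitrary (quasi-)equicontinuous representation $\{T_\phi:\,\phi\in\beta S\}$ of $\beta S$. The fixed point properties ($F_{ec}$) and ($F_{qec}$), as stated, impose no continuity on the orbit maps $\phi\mapsto T_\phi(x)$; they only constrain the family $\{T_\phi\}$ as self-maps of $K$. Hence the given representation need not coincide with the canonical extension of its restriction to $S$, and the relation $T_\phi(x)=\lim_\al T_{s_\al}(x)$ --- precisely the point you yourself flag as delicate --- is not available for it. A point of $F(S)$ produced by the fixed point property of $S$ is therefore not automatically fixed by the operators indexed by $\beta S\setminus S$; nor is $F(S)$ obviously invariant under those operators. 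To close this you must either restrict to representations of $\beta S$ with continuous orbit maps (equivalently, those arising as canonical extensions of $S$-representations) or supply a separate argument that $F(S)$ meets $F(\beta S)$. In fairness, the paper states the proposition with an empty proof, so the same point is left implicit there as well; but as written your argument does not establish the ``$S\Rightarrow\beta S$'' half of the equivalence.
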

\qed

An example of non-amenable semigroup that has the fpp $(F_{qec})$ is $S_2$.
\begin{example}
The partially bicyclic semigroup $S_2$ has the fpp $(F_{qec})$.
\end{example}
\begin{proof}
Denote by $S_2^w$ the weakly almost periodic compactification of $S_2$. With the weak* topology of $WAP(S_2)^*$, $S_2^w$ is a compact semi-topological semigroup containing $S_2$ as a dense subsemigroup. Let $\Ss_2$ be a quasi equicontinuous representation of $S_2$ on $K$. We can extend $\Ss_2$ to a representation of $S_2^w$ on $K$ as defined in the following:

For any $x\in K$, $\phi\in E^*$, define $\phi_x$: $S_2\to \C$ by $\phi_x(s) = \la sx, \phi\ra$ ($s\in S_2$). From \cite[Lemma 3.2]{L-Z}, $\phi_x\in WAP(S_2)$. For each $u \in S_2^w$ we then define $T_u$: $K \to K$ by $\la T_ux, \phi\ra = u(\phi_x)$ ($x\in K$, $\phi\in E^*$). It is readily seen that $T_{u\cdot v} = T_u\circ T_v$ ($u,v\in S_2^w$), and if $u_\al \to u$ in $S_2^w$ then $T_{u_\al}x \to T_ux$ ($x\in K$). (Here we note that since $K$ is compact, on $K$ the weak topology coincides the original topology carried from $E$.)

From \cite[Lemma 4.9]{L-Z} there is an element $e_A \in A = \cap_{n=1}^{\infty} \overline{A_n} \subset S_2^w$, where $A_n = \{a^n, a^{n+1}, \cdots\}$, such that $be_A = e_Ab$ and $ce_A = e_A c$. This implies that $be_A = ce_A$ since $e_A = \lim_ia^{m_i}$ for some net $(a^{m_i})$ of powers of $a$ and $e_Ab = e_Ac = \lim_ia^{m_i-1}$. So $T_b\circ T_{e_A} = T_c\circ T_{e_A}$.

Since the subsemigroup $\la a, b, e\ra$ is a copy of the bicyclic semigroup $S_1$, from Example~\ref{bicyclic}, $F(a)\cap F(b)\cap F(e) \neq \emptyset$. Let $x\in F(a)\cap F(b)\cap F(e)$. Then $x\in F(e_A)$ since $F(e_A)\supset F(a)$. So, $cx = T_c\circ T_{e_A}x = T_b\circ T_{e_A}x = bx =x$. Therefore, $x\in F(c)$ and hence $x$ is a common fixed point for $S_2$. In fact, we have shown that $F(S_2) = eF(b) = eF(c)$.
\end{proof}

It is not known to the authors whether $S_{1,1}$ has the fpp $(F_{ec})$. We note that $AP(S_{1,1})$ has a LIM \cite{M2,L-Z}. Therefore, according to \cite[Theorem]{Lau73}, any equicontinuous affine representation of $S_{1,1}$ on a compact convex set of a locally convex space has a common fixed point. 

\begin{prop}
Suppose that $S$ is a semigroup having the fpp ($F_S$). Then, for any compact subset $K$ of a locally convex space, if $K$ is a continuous retract of $\overline{co}K$, any continuous representation $\Ss = \{T_s:\; s\in S\}$ of $S$ on $K$ has a common fixed point in $K$.
\end{prop}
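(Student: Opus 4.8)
The plan is to transfer the action from $K$ to the compact convex set $\overline{co}K$ by means of the retraction, apply the fixed point property $(F_S)$ there, and then pull the resulting fixed point back into $K$. As a preliminary I would record that $\overline{co}K$ is a nonempty compact convex subset of the ambient locally convex space (the closed convex hull of a compact set being compact in the setting under consideration), so that it is an admissible domain for $(F_S)$. Let $\rho\colon \overline{co}K \to K$ denote the given continuous retraction, so that $\rho(x)=x$ for every $x\in K$.

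Next, for each $s\in S$ I would set $\hat T_s = T_s\circ \rho\colon \overline{co}K \to K \subseteq \overline{co}K$. The key algebraic point is that $\{\hat T_s : s\in S\}$ is again a representation of $S$. Indeed, for $x\in \overline{co}K$ the element $(T_t\circ\rho)(x)$ already lies in $K$, because $\rho$ maps into $K$ and each $T_t$ maps $K$ into $K$; since $\rho$ is the identity on $K$, this gives $\rho\bigl((T_t\circ\rho)(x)\bigr)=(T_t\circ\rho)(x)$, whence
\[
\hat T_s\circ \hat T_t = T_s\circ\rho\circ T_t\circ\rho = T_s\circ T_t\circ\rho = T_{st}\circ\rho = \hat T_{st}.
\]
Each $\hat T_s$ is continuous as a composition of continuous maps, and joint continuity of $(s,x)\mapsto \hat T_s(x)$ follows from joint continuity of the original action together with continuity of $\rho$ (when $S$ is discrete there is nothing further to check). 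Thus $\{\hat T_s\}$ is a continuous representation of $S$ on the compact convex set $\overline{co}K$.

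Applying $(F_S)$ to this representation yields a point $x_0\in \overline{co}K$ with $\hat T_s(x_0)=x_0$ for all $s\in S$. I would then observe that $x_0=\hat T_s(x_0)=T_s(\rho(x_0))\in K$, so $x_0$ in fact lies in $K$; consequently $\rho(x_0)=x_0$, and therefore $T_s(x_0)=T_s(\rho(x_0))=\hat T_s(x_0)=x_0$ for every $s\in S$. Hence $x_0$ is a common fixed point for $\Ss$ in $K$, which is what we want.

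I expect the only genuine subtlety to be the compactness of $\overline{co}K$, needed so that $(F_S)$ is applicable; this requires the ambient locally convex space to be suitably well-behaved (e.g.\ quasi-complete). By contrast, the homomorphism identity $\hat T_s\circ\hat T_t=\hat T_{st}$ and the joint-continuity verification are routine once one notes that $T_t\circ\rho$ already takes values in $K$, where $\rho$ acts as the identity, and the final descent of $x_0$ into $K$ is automatic from $\hat T_s = T_s\circ\rho$ landing in $K$.
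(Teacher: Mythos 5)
Your proposal is correct and follows the paper's own argument exactly: the paper likewise sets $\Ss^\tau = \{T_s\circ\tau : s\in S\}$ on $\overline{co}K$, applies $(F_S)$ there, and notes that the common fixed points of $\Ss^\tau$ are common fixed points of $\Ss$ in $K$. Your write-up merely supplies the routine verifications (the homomorphism identity via $\rho|_K = \mathrm{id}$ and the descent of the fixed point into $K$) that the paper leaves implicit.
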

\begin{proof}
Let $\tau$: $\overline{co}K \to K$ be a retraction. Then $\Ss^\tau = \{T_s\circ\tau:\; s\in S\}$ is a continuous representation of $S$ on $\overline{co}K$. The common fixed points for $\Ss^\tau$ are common fixed points for $\Ss$ in $K$.
\end{proof}
\begin{remark}\label{fg}
Even if $S$ has common fixed points in a compact set $K$, one may not expect that $F(S)$, the fixed point set of $S$ in $K$, to be a continuous retract of $\overline{co}F(S)$. For example: Let $f$ and $g$ be the functions described in the proof of Example~\ref{FS2}. By Schauder's Fixed Point Theorem, $F(S_f)\neq \emptyset$, where $S_f$ is the commutative composition semigroup generated by $f$. We claim that $F(S_f)$ is not a retract of $\overline{co}F(S_f)$. In fact, $F(S_f)$ is a compact subset of $[0,1]$, and it is invariant under $g$ since $f$ and $g$ are commutative. If $F(S_f)$ were a retract of $\overline{co}F(S_f)$, then $g$ would have a fixed point in $F(S_f)$ (which would be a common fixed point of $f$ and $g$) due to the above proposition. This contradiction shows that our claim is true.
\end{remark}

Let $S$ be a semigroup and let $X$ be a translation invariant subspace of $\ell^\infty(S)$ containing the constant functions. Suppose that $\Ss = \{T_s:\; s\in S\}$ is a representation of $S$ as nonexpansive mappings on a weakly compact convex subset $K$ of a Banach space (or a separable locally convex topological space) $E$. If for each $x\in K$ and $\phi \in E^*$, the function $\phi_x$ defined by $\phi_x(s) = \la sx, \phi\ra$ ($s\in S$) belongs to $X$, then we can define $T_m$: $K\to K$ for each mean on $X$ by $\la T_mx, \phi\ra = m(\phi_x)$ ($x\in K$, $\phi \in E^*$). If, in addition, $X$ has a LIM $\mu$, we are interested  in whether or not $F(S) = F(T_\mu)$.
It has been shown in \cite{L-M-T} that this is the case when $X = AP(S)$ and $K$ is compact. One sees from the proof of Lemma~\ref{WAP affine} that this is also the case when $X = WAP(S)$ and the the representation is separately continuous and equicontinuous affine. Here we consider the case when $WAP(S)$ has a multiplicative LIM.

\begin{prop}
Suppose that $WAP(S)$ has a multiplicative left invariant mean $\mu$. Let $\Ss$ be a weakly quasi-equicontinuous representation of $S$ on a nonempty weakly compact convex subset of a Banach space $E$. Then $F(S)\neq \emptyset$ and $F(S) = F(\mu)$.
\end{prop}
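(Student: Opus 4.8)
The plan is to view $\mu$ as a point of the weakly almost periodic compactification $S^w$ of $S$, to extend the representation $\Ss$ to $S^w$, and to exploit the multiplicativity of $\mu$ so as to bypass the affineness that was available in Lemma~\ref{WAP affine}. Throughout, write $K$ for the weakly compact convex set on which $S$ acts.

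First I would record the elementary facts. Since $\Ss$ is weakly quasi-equicontinuous, every $T_s$ belongs to $\overline{\Ss}^{\text{wk}}$, the closure of $\Ss$ in $(K,\text{wk})^K$, and is therefore weakly continuous on $K$. By \cite[Lemma~3.2]{L-Z} we have $\phi_x\in WAP(S)$ for all $x\in K$ and $\phi\in E^*$, so $T_\mu$ is well defined by $\la T_\mu x,\phi\ra=\mu(\phi_x)$ and carries $K$ into $K$ because $K$ is weakly compact. Multiplicativity of $\mu$ means precisely that $\mu$ lies in the spectrum of $WAP(S)$, i.e.\ $\mu\in S^w$; since $S^w$ is the weak* closure of $\{\ep_s:s\in S\}$ in $WAP(S)^*$, there is a net $(s_\al)\subset S$ with $\ep_{s_\al}\to\mu$ weak*. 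As $u\mapsto u(\phi_x)$ is weak* continuous, this gives $T_{s_\al}x\to T_\mu x$ weakly for every $x$, so that $T_\mu\in\overline{\Ss}^{\text{wk}}$.

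The heart of the argument is to show $T_\mu(K)\subseteq F(S)$. Fix $s\in S$ and $x\in K$. Using weak continuity of $T_s$ together with the representation identity $T_sT_{s_\al}=T_{ss_\al}$, the net $T_{ss_\al}x$ converges weakly to $T_s(T_\mu x)$. On the other hand $\ep_{ss_\al}=\ep_s\ep_{s_\al}\to\ep_s\mu$ by separate continuity of the product in $S^w$, and $\ep_s\mu=\mu$ because $\mu$ is a left invariant mean:
\[
(\ep_s\mu)(f)=\lim_\al f(ss_\al)=\lim_\al(\ell_sf)(s_\al)=\mu(\ell_sf)=\mu(f)\qquad(f\in WAP(S)).
\]
Hence $T_{ss_\al}x\to T_\mu x$ weakly as well, and uniqueness of weak limits forces $T_s(T_\mu x)=T_\mu x$. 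Since $s$ and $x$ are arbitrary, $T_\mu(K)\subseteq F(S)$; as $K\neq\emptyset$ this already yields $F(S)\neq\emptyset$. The identity $F(S)=F(\mu)$ then follows quickly: if $T_\mu x=x$ then $x\in T_\mu(K)\subseteq F(S)$, while if $x\in F(S)$ then $\phi_x(s)=\la T_sx,\phi\ra=\la x,\phi\ra$ is constant in $s$, so $\mu(\phi_x)=\la x,\phi\ra\,\mu(1)=\la x,\phi\ra$ for every $\phi\in E^*$, i.e.\ $T_\mu x=x$.

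The main obstacle, and the reason multiplicativity is indispensable, is the absence of affineness. In Lemma~\ref{WAP affine} one writes $\mu$ as a weak* limit of convex combinations $\sum c_i\ep_{s_i}$ and pushes the affine $T_s$ through the sum to reach $T_{\ell_s^*\mu}=T_\mu$; without affineness this manoeuvre is unavailable. Multiplicativity rescues the proof by placing $\mu$ in $S^w$, so that it may be approximated by \emph{single} evaluations $\ep_{s_\al}$ and the bare representation identity $T_sT_{s_\al}=T_{ss_\al}$ suffices. The one point demanding care is the interchange of $T_s$ with the weak limit, which is exactly what weak quasi-equicontinuity guarantees by making each $T_s$ weakly continuous on $K$.
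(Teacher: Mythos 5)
Your proof is correct and follows essentially the same route as the paper's: both hinge on the identity $T_s\circ T_\mu=T_\mu$, which the paper asserts via $T_{\ell_s^*(\mu)}=T_s\circ T_\mu$ for multiplicative means and which you justify in detail by approximating $\mu$ by single evaluations $\ep_{s_\al}$ from the spectrum and passing the weakly continuous $T_s$ through the weak limit. The only (harmless) divergence is at the very end, where the paper gets nonemptiness from Schauder's fixed point theorem applied to $T_\mu$, while you observe more directly that $\emptyset\neq T_\mu(K)\subseteq F(S)$.
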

\begin{proof}
Since the representation on $K$ is weakly quasi equicontinuous, we have $\phi_x\in WAP(S)$ for each $x\in K$ and $\phi\in E^*$ by \cite[Lemma 3.2]{L-Z}. So $T_m$ is well-defined and weakly continuous on $K$ for each mean $m$ on $WAP(S)$. If in addition that $m$ is a multiplicative mean, then we have further that $T_{\ell_s^*(m)} = T_s\circ T_m$ ($s\in S$). So for multiplicative left invariant mean $\mu$, we have $T_s\circ T_\mu = T_\mu$ ($s\in S$). This implies that $T_\mu(K) \subset F(S)$. In particular, $F(T_\mu)\subset F(S)$. On the other hand, it is clear from the definition of $T_m$ that $F(S) \subset F(T_m)$ for each mean $m$ on $WAP(S)$. Hence $F(S) = F(T_\mu)$, which is not empty from  Schauder's fixed point theorem.
\end{proof}
\begin{remark}
The proof also shows that $F(S)$ is a retract of $K$ and $T_\mu$ is a retraction from $K$ onto $F(S)$.
\end{remark}

\section{Extension to $S^r=S\cup\{r\}$}\label{right zero}

We first observe that if $S$ has a right zero element (that is  an element $r\in S$ such that $sr = r$ for all $s\in S$), then every representation of $S$ on any nonempty set $K$ has a common fixed point for $S$. In fact, $F(S) = F(r)=T_r(K) \neq \emptyset$. An example of such a semigroup is the Cuntz semigroup, which is the semigroup $\Oo_n$ ($n>1$) generated by elements $\{p_i,q_i:\, i=1,\ldots,n\}$ subject to constraints $p_iq_j=0$ for $i\neq j$ and $p_iq_i = 1$ (see \cite{Ren, Pat_groupoids} for details). In general, we may associate a right zero element $r$ to a semigroup $S$. Note that this does not result in a new semigroup because $rs$ is usually not defined for $s\in S$. We denote this extended object by $S^r = S\cup\{r\}$. In it the product $st$ is well defined for $s\in S$ and $t\in S^r$. Also $tr=r$ for all $t\in S^r$. A representation $\Ss^r = \{T_t:\; t\in S^r\}$ of $S^r$ on $K$ is understood as a collection of self mappings on $K$ satisfying $T_{st}x =(T_s\circ T_t)x = T_s(T_tx)$ ($s\in S$, $t\in S^r$ and $x\in K$) and $T_r\circ T_r = T_r$. It is still true that any representation of $S^r$ on a nonempty set $K$ has a common fixed point for $S^r$ and $F(S^r) = F(r) = T_r(K)$.

\begin{lemma}\label{S0}
Let $S$ be a semigroup, and let $\Ss = \{T_s:\; s\in S\}$ be a {representation} of $S$ on a nonempty set $K$. Then $\Ss$ has a common fixed point in $K$ if and only if the representation can be extended to a representation of $S^r$ on $K$.
\end{lemma}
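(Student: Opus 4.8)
The plan is to prove the two implications separately, keeping careful track of which products are actually defined in $S^r$: by construction $st$ is defined whenever $s\in S$ and $t\in S^r$, and $tr=r$ for every $t\in S^r$, but $rs$ is \emph{not} defined for $s\in S$. Consequently a representation of $S^r$ is constrained only by the identities $T_{st}=T_s\circ T_t$ (for $s\in S$, $t\in S^r$) together with $T_r\circ T_r=T_r$; in particular, taking $t=r$ and using $sr=r$, any such extension must satisfy $T_s\circ T_r=T_r$ for all $s\in S$. No condition of the form $T_r\circ T_s$ is imposed, since $rs$ is undefined, and this is precisely what makes the extension problem tractable.

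For the forward implication, suppose $x_0\in K$ is a common fixed point for $\Ss$, so $T_s(x_0)=x_0$ for all $s\in S$. I would simply define $T_r\colon K\to K$ to be the constant map $T_r(x)=x_0$ and verify the two required identities. Idempotency is immediate: $T_r(T_r(x))=T_r(x_0)=x_0=T_r(x)$. For compatibility with $S$, for each $s\in S$ we have $sr=r$, so the needed identity is $T_r=T_s\circ T_r$; and indeed $(T_s\circ T_r)(x)=T_s(x_0)=x_0=T_r(x)$, using that $x_0$ is fixed by $T_s$. Since all remaining identities $T_{st}=T_s\circ T_t$ for $s,t\in S$ already hold by hypothesis, the family $\{T_t:\,t\in S^r\}$ is a representation of $S^r$ on $K$ extending $\Ss$.

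For the converse, suppose the representation extends to $S^r$, giving a map $T_r$ with $T_r\circ T_r=T_r$ and $T_s\circ T_r=T_r$ for every $s\in S$ (the latter coming from $sr=r$ as observed above). Fix any $x\in K$ (possible since $K\neq\emptyset$) and set $y=T_r(x)$. Then for every $s\in S$ we have $T_s(y)=T_s(T_r(x))=(T_s\circ T_r)(x)=T_r(x)=y$, so $y$ is a common fixed point for $\Ss$. In fact this shows $\emptyset\neq T_r(K)\subseteq F(S)$, recovering the identity $F(S^r)=F(r)=T_r(K)$ noted before the lemma.

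I do not anticipate a genuine obstacle here: once one observes that $rs$ is undefined and hence no left-composition constraint on $T_r$ is imposed, both directions reduce to a one-line verification. The only point requiring care is bookkeeping, namely confirming that the constant map $T_r\equiv x_0$ meets exactly the axioms demanded of a representation of $S^r$, and conversely that those axioms force the range of $T_r$ into $F(S)$.
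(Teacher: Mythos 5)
Your proof is correct and follows essentially the same route as the paper: the forward direction defines $T_r$ as the constant map onto a common fixed point and checks $T_s\circ T_r=T_r=T_r\circ T_r$, while the converse extracts a fixed point from $T_r(K)$ via $T_s\circ T_r=T_{sr}=T_r$, exactly as in the paper's appeal to $F(S^r)=F(r)=T_r(K)\neq\emptyset$. Your explicit remark that $rs$ is undefined, so no constraint of the form $T_r\circ T_s$ is imposed, is a helpful clarification but not a departure from the paper's argument.
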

\begin{proof}
If the representation can be extended to $S^r$, then, as we observed in the beginning of the section, $F(S^r)\neq \emptyset$. So $F(S)\neq \emptyset$. Conversely, if $F(S)\neq \emptyset$, let $x_0\in F(S)$ and define $T_r(x) = x_0$ ($x\in K$). It is readily seen that $\Ss^r = \Ss \cup \{T_r\}$ is a representation of $S^r$ on $K$ (in fact, $T_{t}\circ T_{r} = T_r = T_{tr}$ for $t\in S^r$).
\end{proof}

Let $S$ be a semitopological semigroup. 
Suppose that $S$ acts on a Hausdorff space $K$ and the representation is separately continuous.  Given $s\in S$ and $h\in C(K)$, we define $_sh \in C(K)$ by $_sh(x) = h(sx)$ ($x\in K$). Similarly, given $k\in K$ and $h\in C(K)$, we define $h_k \in C(S)$ by $h_k(s) = h(sk)$ ($s\in S$). If $K$ is compact, then $h_k\in C_b(S)$. 

\begin{lemma}\label{MLIM}
Let $S$ be a semitopological semigroup acting on a compact Hausdorff space $K$ as separately continuous self-mappings. Suppose that there is a left invariant 
subalgebra $X$ of $C_b(S)$ containing the constant functions and there is a subset $Y$ of $C(K)$ such that $Y$ separates points of $K$, $_sh \in Y$ and $h_k\in X$ for $h\in Y$, $s\in S$ and $k\in K$. If $X$ has a multiplicative left invariant mean, then the representation of $S$ on $K$ can be extended to a representation of $S^r$ on $K$ and so $S$ has a common fixed point in $K$.
\end{lemma}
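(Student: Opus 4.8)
The plan is to construct the single extra self-map $T_r\colon K\to K$ explicitly and then quote Lemma~\ref{S0}. Write $\mu$ for the given multiplicative left invariant mean on $X$. The role of multiplicativity is to let us realize $\mu$ by a net of \emph{point} evaluations: regarding $X$ (or its norm closure) as a commutative $C^*$-subalgebra of $C_b(S)$, a multiplicative mean is precisely a character of $X$, and the evaluations $\{\ep_s:\, s\in S\}$ are weak* dense in the character space (any $g\in X$ with $g(s)=\ep_s(g)=0$ for all $s$ is the zero function). Hence there is a net $(s_\al)\subset S$ with $\mu=\text{wk*-}\lim_\al \ep_{s_\al}$, i.e.\ $\mu(g)=\lim_\al g(s_\al)$ for every $g\in X$.

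Next I would define $T_r$. Fix $k\in K$ and look at the net $(s_\al k)$ in the compact space $K$. For each $h\in Y$ we have $h(s_\al k)=h_k(s_\al)\to\mu(h_k)$, since $h_k\in X$. Thus any convergent subnet of $(s_\al k)$ tends to a point $y$ with $h(y)=\mu(h_k)$ for all $h\in Y$; as $Y$ separates the points of $K$, such a $y$ is unique, so $(s_\al k)$ has exactly one cluster point and therefore converges. I then set $T_r(k):=\lim_\al s_\al k$, equivalently the unique point of $K$ with $h(T_rk)=\mu(h_k)$ for all $h\in Y$.

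It remains to verify that $\Ss^r=\Ss\cup\{T_r\}$ is a representation of $S^r$, i.e.\ that $T_s\circ T_r=T_r$ for $s\in S$ and $T_r\circ T_r=T_r$. For the first identity, separate continuity of the action together with continuity of $h$ gives, for $s\in S$,
\[ h(T_sT_rk)=\lim_\al h(ss_\al k)=\lim_\al(\ell_s h_k)(s_\al)=\mu(\ell_s h_k)=\mu(h_k)=h(T_rk), \]
where $\ell_s h_k\in X$ because $X$ is left invariant and the fourth equality is the left invariance of $\mu$; separation by $Y$ yields $T_sT_rk=T_rk$. The same computation shows that $s\mapsto h(s\cdot T_rk)=h_{T_rk}(s)$ is the \emph{constant} function $\mu(h_k)$, so, since $\mu$ fixes constants, $h(T_rT_rk)=\mu(h_{T_rk})=\mu(h_k)=h(T_rk)$, and again by separation $T_rT_rk=T_rk$.

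With both identities in hand, $\Ss^r$ is a representation of $S^r$, so Lemma~\ref{S0} (or, directly, the inclusion $\emptyset\neq T_r(K)\subseteq F(S)$ coming from $T_sT_r=T_r$) gives a common fixed point for $S$ in $K$. I expect the genuine obstacle to be exactly the well-definedness of $T_r$ in the second paragraph, namely that the assignment $h\mapsto\mu(h_k)$ is attained at an honest point of $K$. This is what multiplicativity secures: it represents $\mu$ by single evaluations $\ep_{s_\al}$, so the limit $\lim_\al s_\al k$ remains inside $K$. For a merely invariant mean one would only obtain convex combinations $\sum c_i\ep_{s_i}$, and the corresponding ``mean value'' need not be realized by any point of $K$, which is why multiplicativity is essential here.
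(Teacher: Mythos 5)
Your proof is correct and follows essentially the same route as the paper's: realize the multiplicative mean as a weak* limit of point evaluations $\ep_{s_\al}$, define $T_r(k)$ as the unique point of $K$ where every $h\in Y$ takes the value $\mu(h_k)$, and verify $T_sT_r=T_r$ and $T_rT_r=T_r$ via left invariance and separation, then invoke Lemma~\ref{S0}. The only (cosmetic) difference is that you obtain the point $T_r(k)$ as the limit of the net $(s_\al k)$, which has a unique cluster point in the compact space $K$, whereas the paper extracts it from the finite intersection property of the compact sets $K_{h,k}=\{x: h(x)=\mu(h_k)\}$.
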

 \begin{proof}
If $m$ is a multiplicative mean on $X$, then there is a net $(s_i)\subset S$ such that $m = \text{wk*}\lim \de_{s_i}$. Here $\de_s$ denotes the evaluation functional on $X$, that is,  $\de_s(f) = f(s)$ ($f\in X$). Let $h\in Y$. Then $m(h_k) = \lim_i h(s_ik) \in h(K)$ since $h(K)$ is compact. Denote
\[  K_{h,k} = \{x\in K :\, h(x) = m(h_k)\}, \quad (k\in K, h\in Y). \]
Clearly, $K_{h,k}$ is a compact subset of $K$. For each $k\in K$, $(K_{h,k})_{h\in Y}$ has finite intersection property. In fact, if $h^1, h^2, \cdots, h^n \in Y$ then the mapping $\Gamma$: $K\to \C^n$ defined by $\Gamma(x) = (h^1(x), h^2(x), \cdots, h^n(x))$ is continuous and $\Gamma(K)$ is compact. So,
\begin{align*} (m(h^1_k), m(h^2_k), \cdots, m(h^n_k)) & = \lim_i (h^1(s_ik), h^2(s_ik), \cdots, h^n(s_ik))\\
                                         & = \lim_i\Gamma(s_ik) \in \Gamma(K) . 
                                          \end{align*}
This shows that $\cap_{j=1}^n K_{h^j, k} \neq\emptyset$. Therefore $\cap_{h\in Y} K_{h, k} \neq\emptyset$. In fact, $\cap_{h\in Y} K_{h, k} = \{x_k\}$ is a singleton since $Y$ separates the points of $K$. So we may extend the $S$ action on $K$ to $S^r$ by defining $T_r(k) = x_k$ ($k\in K$). If $X$ has a multiplicative left invariant mean, still denoted by $m$, then this extension is a representation of $S^r$ on $K$. To see this we compute
\begin{align} 
 h(sT_r(k)) = {_sh}(T_r(k)) = m((_sh)_k) &= m(\ell_s(h_k)) \label{extension}\notag \\
                                         & = m(h_k) = h(T_r(k))
\end{align}
for $s\in S$, $k\in K$ and $h\in Y$ (note that $_sh\in Y$ for $h\in Y$ from the assumption). This implies that $sT_r(k) = T_r(k)$ ($s\in S$, $k\in K$) since $Y$ separates points of $K$, and hence $T_s\circ T_r = T_r$ ($s\in S$). We thus have $T_s\circ T_t = T_{st}$ for $s\in S$ and $t\in S^r$. Moreover, Equation~(\ref{extension}) shows that $h_{T_r(k)}(s) = h(T_r(k))$ is a constant function on $S$. So
\[  h(T_r(T_r(k))) = m(h_{T_r(k)}) = h(T_r(k))m(1) = h(T_r(k))  \quad (k\in K, h\in Y).  \]
We then have $T_r\circ T_r = T_r$.
\end{proof}

The following theorem was obtained  in \cite[Theorem~3.8]{L-Z} for a separable semitopological semigroup $S$. We now remove the separability condition.

\begin{thm}\label{WAP MLIM}
Let $S$ be a semitopological semigroup. Then $WAP(S)$ has a multiplicative LIM if and only if the following fpp hold.
\begin{description}
\item[($F_{Eq}$)] Every separately continuous and quasi-equicontinuous representation of $S$ on a compact Hausdorff space $K$ has a common fixed point in $K$.
\end{description}
\end{thm}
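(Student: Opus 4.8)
The plan is to prove the two implications separately, obtaining sufficiency from Lemma~\ref{MLIM} and necessity from the action of $S$ on its weakly almost periodic compactification. For the sufficiency direction, suppose $WAP(S)$ has a multiplicative LIM and let $\Ss=\{T_s:\,s\in S\}$ be a separately continuous and quasi-equicontinuous representation of $S$ on a compact Hausdorff space $K$. I would apply Lemma~\ref{MLIM} with $X=WAP(S)$ and $Y=C(K)$: here $X$ is a left invariant $C^*$-subalgebra of $C_b(S)$ containing the constants and, by hypothesis, carrying a multiplicative left invariant mean, while $Y=C(K)$ separates the points of the compact Hausdorff space $K$. For $h\in Y$ and $s\in S$ the function ${}_{s}h(x)=h(sx)$ again lies in $C(K)=Y$ since each $T_s$ is continuous. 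The only hypothesis of Lemma~\ref{MLIM} that is not immediate is that $h_k(s)=h(sk)$ belongs to $WAP(S)$ for all $h\in C(K)$ and $k\in K$; once this is verified, Lemma~\ref{MLIM} produces a common fixed point and hence ($F_{Eq}$) holds.

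To establish $h_k\in WAP(S)$ I would use Grothendieck's double limit criterion together with quasi-equicontinuity, this being the compact-space counterpart of \cite[Lemma~3.2]{L-Z}. Given sequences $(s_m),(t_n)\subset S$ for which both iterated limits of $h(s_mt_nk)$ exist, pass, by compactness of $K$ and of $K^K$, to subnets with $t_nk\to y$ in $K$ and $T_{s_m}\to p$ in the product space $K^K$. Quasi-equicontinuity says that $p$, being in the closure of $\{T_s:\,s\in S\}$ in $K^K$, is continuous. Taking the inner limit first in $m$ gives $h(p(t_nk))$ and then in $n$ gives $h(p(y))$; taking the inner limit first in $n$ gives $h(s_my)$ and then in $m$ gives $h(p(y))$ as well. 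Thus the two iterated limits coincide, so $h_k\in WAP(S)$.

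For the necessity direction, assume ($F_{Eq}$) and let $K=S^w$ be the weakly almost periodic compactification of $S$, realised as the spectrum of the commutative unital $C^*$-algebra $WAP(S)$ with its weak* topology. Then $S^w$ is a compact Hausdorff semitopological semigroup, $S$ maps into it by $s\mapsto\delta_s$, and its points are exactly the multiplicative means on $WAP(S)$. Define $T_s(\mu)=\delta_s\mu$ for $\mu\in S^w$; evaluating on $f\in WAP(S)$ gives $(\delta_s\mu)(f)=\mu(\ell_sf)$, so $T_s$ is the restriction of $\ell_s^*$ and $\Ss=\{T_s\}$ is a representation of $S$. It is separately continuous since in a semitopological semigroup both translations are continuous and $s\mapsto\delta_s$ is continuous. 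It is quasi-equicontinuous since the map $\mu\mapsto(\nu\mapsto\mu\nu)$ from $S^w$ into $(S^w)^{S^w}$ is continuous for the product topology (right translations being continuous), so it has compact, hence closed, image consisting of the continuous left translations; this image contains $\{T_s:\,s\in S\}$ and therefore its closure in $(S^w)^{S^w}$.

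Applying ($F_{Eq}$) then yields a common fixed point $\mu_0\in S^w$, that is $\ell_s^*\mu_0=\mu_0$, which says exactly that $\mu_0(\ell_sf)=\mu_0(f)$ for all $s\in S$ and $f\in WAP(S)$, so $\mu_0$ is left invariant; and as a point of the spectrum, $\mu_0$ is a nonzero multiplicative functional on a unital commutative $C^*$-algebra and hence a mean. Thus $\mu_0$ is a multiplicative left invariant mean on $WAP(S)$, completing this direction. I expect the main obstacle to be the sufficiency direction, precisely the passage from quasi-equicontinuity to $h_k\in WAP(S)$ via the double limit criterion; the necessity direction is largely a matter of assembling the standard structural facts about $S^w$ and verifying the two continuity conditions.
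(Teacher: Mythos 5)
Your proposal is correct and follows essentially the same route as the paper: sufficiency via Lemma~\ref{MLIM} with $X=WAP(S)$ and $Y=C(K)$, and necessity by applying ($F_{Eq}$) to the left-translation action on the set of multiplicative means on $WAP(S)$ with the weak* topology. The only difference is that where the paper cites \cite[Lemma~3.2]{L-Z} for $h_k\in WAP(S)$ and the proof of \cite[Theorem~3.4]{L-Z} for the continuity properties of $\{\ell_s^*\}$, you supply direct (and correct) verifications via Grothendieck's double limit criterion and the semitopological semigroup structure of the $WAP$-compactification.
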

\begin{proof}
Suppose that $WAP(S)$ has a multiplicative LIM. Let $\Ss=\{T_s:\, s\in S\}$ be a separately continuous and quasi-equicontinuous representation of $S$ on a compact Hausdorff space $K$. We consider $X = WAP(S)$ and $Y = C(K)$. Then $X$ and $Y$ satisfy the conditions in Lemma~\ref{MLIM}, where the condition $h_k \in X$ for $h\in C(K)$ and $k\in K$ holds due to \cite[Lemma~3.2]{L-Z}. So the representation $\Ss$ of $S$ on $K$ extends to $S^r$. Then every point in $T_r(K)$ is a common fixed point of $S$ in $K$.

For the converse we consider $E = WAP(S)^*$ with the topology determined by the family of seminorms $Q = \{p_f:\, f\in WAP(S)\}$, where $p_f (\phi) = \sup\{|\phi(\ell_sf)|, |\phi(f)|:\, s\in S\}$ ($\phi\in E$). Let $C$ be the set of means on $WAP(S)$. Then $C$ is a weakly compact convex subset of $(E,Q)$ (note that the weak topology of $(E,Q)$ coincides with the weak* topology $\sigma(WAP(S)^*, WAP(S))$ by the Machey-Arens theorem). It was shown in the counterpart of the proof of \cite[Theorem~3.4]{L-Z}: that  $\Ss = \{\ell_s^*: \,s\in S\}$ defines a weakly separately continuous and weakly quasi-equicontinuous representation on $C$. Let $K$ be the set of multiplicative means on $WAP(S)$. Then $K$ is a weakly compact $S$-invariant subset of $C$. So $\Ss$ is still separately continuous and quasi-equicontinuous as a representation of $S$ on $K$, where $K$ is endowed with the weak topology of $(E,Q)$, according to \cite[Lemma~3.1(2)]{L-Z}. By the hypothesis, $S$ has a common fixed point in $K$, which is indeed a multiplicative LIM on $WAP(S)$.
\end{proof}

It is not known to the authors whether the separability assumption in \cite[Theorem~3.4]{L-Z} is removable.

Recall that a function $f\in C_b(S)$ is \emph{left uniformly continuous} if the mapping $s\mapsto \ell_sf$ from $S$ into $C_b(S)$ is continuous. The set MM$(S)$ of all multiplicative means on $C_b(S)$ is a total subset of $C_b(S)^*$. It generates a locally convex topology $\tau_{MM} = \sigma(C_b(S), \text{MM}(S))$ on $C_b(S)$. A function $f\in C_b(S)$ is called \emph{left multiplicatively continuous} if the mapping $s\mapsto \ell_sf$ from $S$ into ($C_b(S),\tau_{MM})$ is continuous \cite{Mitch_LUC}.  Let $LUC(S)$ be the subspace of $C_b(S)$ consisting of all left uniformly continuous functions on $S$, and let $LMC(S)$ be the subspace of $C_b(S)$ consisting of all left multiplicatively continuous functions on $S$. Both $LUC(S)$ and $LMC(S)$ are left invariant closed subalgebras of $C_b(S)$ containing the constant functions. We have the following result which was first proved by T. Mitchell in \cite{Mitch_LUC}.

\begin{thm}\label{Thm Sr}
Let $S$ be a semitopological semigroup. Then
\begin{enumerate}
\item $LUC(S)$ has a multiplicative left invariant mean if and only if 
\begin{description}
\item[($F_j$)] every jointly continuous representation of $S$ on a nonempty compact Hausdorff space has a common fixed point for $S$;
\end{description}
\item $LMC(S)$ has a multiplicative left invariant mean if and only if
\begin{description}
\item[($F_{s}$)] every separately continuous representation of $S$ on a nonempty compact Hausdorff space has a common fixed point for $S$.
\end{description}
\end{enumerate}
\end{thm}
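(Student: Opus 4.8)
The plan is to prove the two equivalences in parallel, pairing ($F_j$) with the algebra $X=LUC(S)$ and ($F_s$) with $X=LMC(S)$. For the direction ``multiplicative LIM $\Rightarrow$ fixed point property'' I would feed everything into Lemma~\ref{MLIM}, and for the converse I would manufacture a canonical action of $S$ on the space $\mathrm{MM}(X)$ of multiplicative means of $X$ and apply the fixed point property to it. In both halves the test space in Lemma~\ref{MLIM} is taken to be $Y=C(K)$, which automatically separates the points of a compact Hausdorff $K$.

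For the sufficiency of the mean, suppose $X$ has a multiplicative LIM and let $S$ act on a compact Hausdorff space $K$, jointly continuously in case (1) and separately continuously in case (2). Separate continuity already gives ${}_s h\in C(K)$ for $h\in C(K)$, so the only real hypothesis of Lemma~\ref{MLIM} to check is $h_k\in X$. In case (1) I would fix $s_0$ and note that $(s,y)\mapsto h(sy)$ is continuous on $S\times K$; by the tube lemma (compactness of $K$) there is a neighbourhood $U$ of $s_0$ with $|h(sy)-h(s_0y)|<\ep$ for all $s\in U$ and all $y\in K$, whence $\|\ell_s h_k-\ell_{s_0}h_k\|_\infty\le\ep$ and $h_k\in LUC(S)$. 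In case (2) I would use that for each $\nu\in\mathrm{MM}(S)$ the functional $h\mapsto\nu(h_k)$ is a character of $C(K)$, hence evaluation at a point $\nu k\in K$; then the identity $\ell_s(h_k)=({}_s h)_k$ yields $\nu(\ell_s h_k)=({}_s h)(\nu k)=h(s\cdot\nu k)$, continuous in $s$, so $s\mapsto\ell_s h_k$ is $\tau_{MM}$-continuous and $h_k\in LMC(S)$. With the hypotheses verified, Lemma~\ref{MLIM} extends the action to $S^r$, and since $\emptyset\neq T_r(K)\subseteq F(S)$ (as $sr=r$ forces $T_s T_r=T_r$), this establishes ($F_j$), respectively ($F_s$).

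For the converse I would take $K=\mathrm{MM}(X)$, which is nonempty (point evaluations are multiplicative means) and weak\textsuperscript{}* compact Hausdorff, and set $T_s=(\ell_s)^*$, that is $(T_s\nu)(f)=\nu(\ell_s f)$. Because $X$ is a translation-invariant subalgebra containing the constants, each $\ell_s$ is a unital algebra endomorphism, so $T_s$ maps $K$ into $K$; and since $\ell_t\ell_s=\ell_{st}$ one gets $T_{st}=T_sT_t$, making $\{T_s\}$ a genuine representation whose common fixed points are exactly the multiplicative left invariant means on $X$. Each $T_s$ is weak\textsuperscript{}*-continuous. For joint continuity in case (1), I would estimate $|(T_s\nu)(f)-(T_{s_0}\nu_0)(f)|\le\|\ell_s f-\ell_{s_0}f\|_\infty+|\nu(\ell_{s_0}f)-\nu_0(\ell_{s_0}f)|$, where the first term vanishes because $f\in LUC(S)$ makes $s\mapsto\ell_s f$ norm-continuous and the second vanishes as $\nu\to\nu_0$. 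For separate continuity in $s$ in case (2), I would fix $\nu\in\mathrm{MM}(LMC(S))$ and use that the inclusion $LMC(S)\hookrightarrow C_b(S)$ of unital commutative $C^*$-algebras induces a surjection of spectra, so $\nu$ extends to some $\tilde\nu\in\mathrm{MM}(S)$; then $s\mapsto(T_s\nu)(f)=\tilde\nu(\ell_s f)$ is continuous precisely because $f\in LMC(S)$ means $s\mapsto\ell_s f$ is continuous into $(C_b(S),\tau_{MM})$. Applying ($F_j$), respectively ($F_s$), yields the required multiplicative LIM.

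The algebraic bookkeeping (that $\{T_s\}$ is a bona fide representation, that $K$ is compact, and that a fixed point is an invariant mean) is routine. The crux is aligning the continuity demanded by each fixed point property with the defining topology of the matching algebra: $LUC(S)$ is tailored so that $s\mapsto\ell_s f$ is norm-continuous, which drives both the tube-lemma check that $h_k\in LUC(S)$ and the joint continuity of the dual action; $LMC(S)$ is tailored so that $s\mapsto\ell_s f$ is $\tau_{MM}$-continuous. I expect the hardest point to be the $LMC$ separate-continuity step, where one must observe that a character of the subalgebra $LMC(S)$ extends to a character of $C_b(S)$ before $\tau_{MM}$-continuity can be used, together with the localizing identity $\ell_s(h_k)=({}_s h)_k$ that reduces the $LMC$ computation to evaluation at the point $\nu k$.
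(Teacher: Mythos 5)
Your proposal is correct and follows essentially the same route as the paper: both directions reduce the ``mean $\Rightarrow$ fpp'' half to Lemma~\ref{MLIM} with $Y=C(K)$ (your tube-lemma check that $h_k\in LUC(S)$ and your character-extension/evaluation-point argument that $h_k\in LMC(S)$ are exactly the ``standard arguments'' the paper invokes), and both obtain the converse by applying the fixed point property to the dual left-translation action on the space of multiplicative means of the relevant algebra. The only cosmetic difference is that the paper phrases the $LMC$ verification via a net $\mu=\lim_i\delta_{s_i}$ with $s_ik\to k'$, whereas you identify $h\mapsto\nu(h_k)$ directly as evaluation at a point of $K$; these are the same computation.
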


\begin{proof} By Lemma~\ref{S0} it suffices to consider whether the representation can be extended to a representation of $S^r$ on $K$ in each case.
For (a)
, to show the necessity we take $X = LUC(S)$ and $Y=C(K)$. 
Since the $S$ action on $K$ is jointly continuous and $K$ is compact, using standard $\varepsilon$-argument one sees that the mapping $s\mapsto {_sh}$ is a continuous map from $S$ into $C(K)$ for each $h\in C(K)$. This implies that, for each $k\in K$ and $h\in Y=C(K)$, $h_k \in LUC(S)=X$. So the conditions of Lemma~\ref{MLIM} hold. Thus, the representation of $S$ on $K$ may be extended to $S^r$.
To show the sufficiency, we consider the set $K$ of all multiplicative means on $LUC(S)$. With the weak* topology of $LUC(S)^*$, $K$ is a compact Hausdorff space. Let $T_s = \ell_s^*$ ($s\in S$), the dual operator of the left translation $\ell_s$ on $LUC(S)$. Then $\Ss = \{T_s:\; s\in S\}$ is a jointly continuous representation of $S$ on $K$. So it extends to a representation of $S^r$ on $K$ by the assumption. Clearly, every element of $T_r(K)$ is a multiplicative left invariant mean on $LUC(S)$.

For (b)
, to show the necessity we take $X = LMC(S)$ and $Y=C(K)$. If $\mu\in \text{MM}(S)$, then there is a net $(s_i)\subset S$ such that $\mu = \lim_i\delta_{s_i}$ in the weak* topology of $C_b(S)^*$. Given $k\in K$, passing to a subnet if necessary, we may assume $\lim_i s_ik = k'\in K$. Then $\langle \mu, \ell_s(h_k)\rangle = h(sk')$ is clearly continuous in $s\in S$ when $h\in Y$. This implies that $h_k\in LMC(S)$. From Lemma~\ref{MLIM} the representation of $S$ on $K$ may be extended to $S^r$ if $LMC(S)$ has a multiplicative left invariant mean. To show the sufficiency, we consider the set $K$ of all multiplicative means on $LMC(S)$. With the weak* topology of $LMC(S)^*$, $K$ is a compact Hausdorff space. Since each multiplicative mean on $LMC(S)$ may be extended to a multiplicative mean on $C_b(S)$, $\Ss = \{\ell_s^*:\; s\in S\}$ is a separately continuous representation of $S$ on $K$. So it extends to a representation of $S^r$ on $K$ by the assumption, showing that $LMC(S)$ has a multiplicative left invariant mean.
\end{proof}

Now we consider a convex subset $K$ of a separated locally convex topological vector space $E$. We denote by $A(K)$ the set of all continuous affine functions on $K$, that is, 
\begin{align*}  A(K) = \{f\in C(K): &\, f(\sum_{i=1}^n{\al_i x_i}) = \sum_{i=1}^n{\al_i f(x_i)},\\
                                 & \text{ for }\al_i>0, x_i\in K , \sum_{i=1}^n{\al_i} = 1\} .
                                 \end{align*} 
We note $E^*\subset A(K)$.

\begin{lemma}\label{convex}
Let $S$ be a semitopological semigroup acting as separately continuous self-mappings on a compact convex subset $K$ of a separated locally convex topological vector space. 
 Suppose that there is a left invariant subspace $X$ of $C_b(S)$ containing the constant functions and there is a subset $Y$ of $A(K)$ such that $Y$ separates points of $K$, $_sh \in Y$ and $h_k\in X$ for $h\in Y$, $s\in S$ and $k\in K$. If $X$ has a left invariant mean, then the representation of $S$ on $K$ can be extended to a representation of $S^r$ on $K$ and so $S$ has a common fixed point in $K$.
\end{lemma}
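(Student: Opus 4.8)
The plan is to imitate the proof of Lemma~\ref{MLIM}, with affineness of the members of $Y$ playing the role that multiplicativity of the mean played there. Let $m$ be a left invariant mean on $X$. Since the means on $X$ form the weak* closure of $co\{\de_s:\,s\in S\}$, I would write $m=\text{wk*-}\lim_\al u_\al$ with $u_\al=\sum_i c_i^{(\al)}\de_{s_i^{(\al)}}$ finite convex combinations of evaluation functionals. Fixing $k\in K$, set $y_\al=\sum_i c_i^{(\al)}T_{s_i^{(\al)}}(k)$, which lies in $K$ by convexity; then for every $h\in Y\subset A(K)$ affineness gives $h(y_\al)=\sum_i c_i^{(\al)}h(s_i^{(\al)}k)=u_\al(h_k)\to m(h_k)$. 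By compactness of $K$, pass to a subnet so that $y_\al\to y\in K$; continuity of each $h$ then yields $h(y)=m(h_k)$ for all $h\in Y$. Because $Y$ separates points, such $y$ is unique (so in fact the whole net converges), and I define $T_r(k)=y$.

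This well-definedness step is the one genuinely new ingredient over Lemma~\ref{MLIM}, and the main obstacle. There, multiplicativity of $m$ let one take $m$ as a weak* limit of pure evaluations $\de_{s_i}$, so that $m(h_k)=\lim_i h(s_ik)\in h(K)$ came for free from compactness of $h(K)$. Here $m$ is only a mean, hence only a limit of convex combinations, and nothing guarantees a priori that $m(h_k)$ is a value of $h$ unless one keeps the approximating points inside $K$. Affineness of the $h\in Y$ is exactly what lets the convex combination be absorbed into $h$ and keeps $y_\al$ in the convex set $K$.

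It then remains to check that $\Ss^r=\Ss\cup\{T_r\}$ is a representation of $S^r$, that is, $T_s\circ T_r=T_r$ and $T_r\circ T_r=T_r$. For the first, fixing $s\in S$, $k\in K$, $h\in Y$ and using ${}_sh\in Y$ together with $({}_sh)_k=\ell_s(h_k)$ (since $({}_sh)_k(t)=h(stk)=h_k(st)$), left invariance of $m$ gives
\[ h(sT_r(k))={}_sh(T_r(k))=m(({}_sh)_k)=m(\ell_s(h_k))=m(h_k)=h(T_r(k)), \]
and separation of points forces $T_s(T_r(k))=T_r(k)$. This in turn shows $h_{T_r(k)}$ is the constant function $h(T_r(k))$; as $m$ preserves constants and $h_{T_r(k)}\in X$ (because $T_r(k)\in K$), we obtain $h(T_r(T_r(k)))=m(h_{T_r(k)})=h(T_r(k))$ for all $h\in Y$, whence $T_r\circ T_r=T_r$. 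With $S^r$ now represented on $K$, the observation at the start of the section gives $F(S^r)=T_r(K)\neq\emptyset$, and every point of $T_r(K)$ is fixed by all $T_s$, $s\in S$; hence $S$ has a common fixed point in $K$.
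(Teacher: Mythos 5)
Your proposal is correct and follows essentially the same route as the paper: both hinge on the observation that affineness of the $h\in Y$ lets the approximating convex combinations $\sum_i c_i^{(\al)}\de_{s_i^{(\al)}}$ be realized as points $\sum_i c_i^{(\al)}s_i^{(\al)}k$ of the convex set $K$, after which compactness and the separation property of $Y$ pin down $T_r(k)$, and the verification that $T_s\circ T_r=T_r=T_r\circ T_r$ is carried over verbatim from Lemma~\ref{MLIM}. The only cosmetic difference is that you extract a convergent subnet of the points $y_\al$ directly, whereas the paper runs the equivalent finite-intersection-property argument with the sets $K_{h,k}$.
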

\begin{proof}
Suppose that $m$ is a left invariant mean on $X$. Then there is a net $(m_i)\in co\{\delta_s:\; s\in S\}$ such that $m=\lim_i m_i$ in the weak* topology of $X^*$. Let $m_i = \sum_j{\al^{(i)}_j\delta_{s^{(i)}_j}}$, where $s^{(i)}_j\in S$, $\al^{(i)}_j>0$ and $\sum_j{\al^{(i)}_j}=1$. Then
\[  m(h_k) = \lim_i \sum_j{\al^{(i)}_jh(s^{(i)}_jk)} = \lim_i h(\sum_j{\al^{(i)}_js^{(i)}_jk}) \in h(K)\]
for $h\in Y$ since $h$ is affine and $h(K)$ is compact. As in the proof of Lemma~\ref{MLIM} we denote
\[  K_{h,k} = \{x\in K :\, h(x) = m(h_k)\}, \quad (k\in K, h\in Y). \]
If $h^1, h^2, \cdots, h^n \in Y$ then the mapping $\Gamma$: $K\to \C^n$ defined by $\Gamma(x) = (h^1(x), h^2(x), \cdots, h^n(x))$ is continuous and affine, and $\Gamma(K)$ is compact. So,
\[ (m(h^1_k), m(h^2_k), \cdots, m(h^n_k)) = \lim_i\Gamma(\sum_j{\al^{(i)}_js^{(i)}_jk}) \in \Gamma(K) .  \]
This shows that $(K_{h,k})_{h\in Y}$ has finite intersection property. Thus, 
we have $\cap_{h\in Y} K_{h, k} \neq\emptyset$. Then the argument of the corresponding part of Lemma~\ref{MLIM} works here verbatim to establish the representation of $S^r$ on $K$.
 \end{proof}
 
 We recall that a function $f\in C_b(S)$ is a \emph{weakly left uniformly continuous} if the mapping $s\mapsto \ell_sf$ from $S$ into ($C_b(S),\text{wk})$ is continuous \cite{Mitch_LUC}. The set of all weakly left uniformly continuous functions on $S$ is a left invariant closed subspace of $C_b(S)$, denoted by $WLUC(S)$. Denote by M$(S)$ the set of all means 
on $C_b(S)$ and let $\tau_M = \sigma(C_b(S), \text{M}(S))$. Since $C_b(S)^*$ is linearly  spanned by $\text{M}(S)$, we have that $f\in WLUC(S)$ if and only if the mapping $s\mapsto \ell_sf$ from $S$ into ($C_b(S),\tau_M)$ is continuous. The following theorem is also due to T. Mitchell \cite{Mitch_LUC}.

\begin{thm}\label{Thm Sr convex}
Let $S$ be a semitopological semigroup. Then
\begin{enumerate}
\item $LUC(S)$ has a left invariant mean if and only if
\begin{description}\label{affine Fc}
\item[($F_{ja}$)] every jointly continuous affine representation of $S$ on a nonempty convex compact subset of a separated locally convex topological vector space has a common fixed point for $S$;
\end{description}
\item $WLUC(S)$ has a left invariant mean if and only if
\begin{description}
\item[($F_{sa}$)] every separately continuous affine representation of $S$ on a compact convex subset of a separated locally convex topological vector space has a common fixed point for $S$.
\end{description}
\end{enumerate}
\end{thm}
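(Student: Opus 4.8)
The plan is to run the same scheme as in the proof of Theorem~\ref{Thm Sr}, but with the \emph{affine} extension Lemma~\ref{convex} replacing the multiplicative Lemma~\ref{MLIM}. In each of (a) and (b) the necessity direction (invariant mean $\Rightarrow$ fixed point property) is obtained by feeding the given representation into Lemma~\ref{convex} with $X$ the relevant function space and $Y=A(K)$; the sufficiency direction (fixed point property $\Rightarrow$ invariant mean) is obtained by applying the fixed point property to the canonical representation $\{\ell_s^*\}$ of $S$ on the set of means of that function space. The set $Y=A(K)$ always separates the points of $K$, since $E^*\subset A(K)$ and $E^*$ separates points by Hahn--Banach, and for $h\in A(K)$ the function ${}_sh=h\circ T_s$ is again continuous and affine because each $T_s$ is continuous and affine, so ${}_sh\in A(K)$. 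Thus in both parts the only non-formal hypothesis of Lemma~\ref{convex} left to check is that $h_k\in X$ for $h\in A(K)$ and $k\in K$.

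For part (a) I would take $X=LUC(S)$ and argue for necessity that joint continuity of the action together with compactness of $K$ makes $s\mapsto{}_sh$ norm continuous from $S$ into $C(K)$, by the same $\varepsilon$-argument used in Theorem~\ref{Thm Sr}(a). Writing $\Lambda\colon C(K)\to C_b(S)$, $\Lambda(g)(t)=g(tk)$, for the obvious norm contraction, one has $\ell_s h_k=\Lambda({}_sh)$ (both send $t$ to $h(stk)$), so $s\mapsto\ell_s h_k$ is norm continuous, i.e.\ $h_k\in LUC(S)$; Lemma~\ref{convex} then extends the action to $S^r$ and yields a common fixed point, giving $(F_{ja})$. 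For sufficiency I would let $K$ be the weak* compact convex set of means on $LUC(S)$ with $T_s=\ell_s^*$; this is affine, and since $f\in LUC(S)$ makes $s\mapsto\ell_s f$ norm continuous, the two-term estimate $|m_i(\ell_{s_i}f)-m(\ell_s f)|\le\|\ell_{s_i}f-\ell_s f\|+|m_i(\ell_s f)-m(\ell_s f)|$ shows $(s,m)\mapsto\ell_s^*m$ is jointly weak* continuous, so $(F_{ja})$ produces a left invariant mean.

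For part (b) the scheme is identical with joint continuity weakened to separate continuity and $LUC(S)$ replaced by $WLUC(S)$. Sufficiency is routine: on the means of $WLUC(S)$ the representation $\{\ell_s^*\}$ is affine, weak*--weak* continuous in $m$ (adjoint of a bounded operator), and continuous in $s$ for fixed $m$ precisely because $f\in WLUC(S)$ means $s\mapsto\ell_s f$ is weakly continuous; hence $(F_{sa})$ gives a LIM. The delicate point, which I expect to be the main obstacle, is the verification that $h_k\in WLUC(S)$ for $h\in A(K)$ under \emph{only} separate continuity: the norm argument of part (a) is unavailable, and a uniformly bounded net in $C(K)$ converging pointwise need not converge weakly, so one cannot simply pass pointwise limits under a measure.

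To clear this obstacle I would use affineness through barycenters. For a regular Borel probability measure $\nu$ on the compact convex set $K$ there is a barycenter $b(\nu)\in K$ with $\int_K a\,d\nu=a(b(\nu))$ for every $a\in A(K)$. Applying this to $a={}_sh\in A(K)$ gives
\[ \int_K ({}_sh)\,d\nu \;=\; ({}_sh)(b(\nu)) \;=\; h\bigl(T_s\,b(\nu)\bigr), \]
which is continuous in $s$ by separate continuity of $s\mapsto T_s\,b(\nu)$ and continuity of $h$. Since every $\nu\in C(K)^*=M(K)$ is a linear combination of such probability measures, $s\mapsto{}_sh$ is weakly continuous from $S$ into $C(K)$; composing with $\Lambda$ (a bounded, hence weak--weak continuous, operator) and using $\ell_s h_k=\Lambda({}_sh)$ shows $s\mapsto\ell_s h_k$ is weakly continuous, that is $h_k\in WLUC(S)$. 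With this verified, Lemma~\ref{convex} applies with $X=WLUC(S)$, $Y=A(K)$, extends the action to $S^r$, and delivers the common fixed point, completing necessity.
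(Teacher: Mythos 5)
Your proposal follows the paper's proof exactly: both directions of (a) and (b) are handled by Lemma~\ref{convex} with $X=LUC(S)$ resp.\ $WLUC(S)$ and $Y=A(K)$ for necessity, and by the canonical affine action $\{\ell_s^*\}$ on the weak* compact convex set of means for sufficiency. The only place you go beyond the paper's (very terse) text is the barycenter verification that $h_k\in WLUC(S)$ under mere separate continuity; this is a correct and useful filling-in of the step the paper dismisses as ``similar to Theorem~\ref{Thm Sr}'', and it is the natural abstract form of the convex-combination/compactness argument already used inside Lemma~\ref{convex}.
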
 
 \begin{proof}
 We use Lemma~\ref{convex}. The proof is similar to that of Theorem~\ref{Thm Sr}. For (a)
 , we take $X = LUC(S)$, $Y=A(K)$; and for (b) 
   we take $X = WLUC(S)$, $Y=A(K)$. We note $_sh\in Y$ if $h\in Y$ and $s\in S$ since the $S$ representation is affine. 
\end{proof}



\begin{remark}
We note that Theorem~\ref{Thm Sr convex}.(a) 
 is no longer true if one replaces ``affine'' with ``convex''. Here by a convex map (or a convex function) we mean a map (resp. a function) defined on a convex set $K$ that maps each convex subset of $K$ onto a convex set. For example any continuous function on an interval is a convex function. A counter example is as follows: Let $f$ and $g$ be two continuous functions on the unit interval $[0,1]$ such that $f$ commutes with $g$ under composite product but they do not have a common fixed point in $[0,1]$. We consider the free discrete commutative semigroup $SC_2$ on two generators $s_1$ and $s_2$. Then $SC_2$ is amenable. Define  on $[0,1]$ by $T_{s_1}(x)=f(x)$ and $T_{s_2}(x)=g(x)$ ($x\in [0,1]$). They generate a continuous convex representation of $SC_2$ on $[0,1]$.  This representation does not have a common fixed point in $[0,1]$.
\end{remark}

\section{Some remarks and open problems}\label{open questions}

The following is a long-standing open problem.
\begin{question}\label{Q F*}
Does a semitopological semigroup $S$ has the fpp ($F_*$) if $LUC(S)$ has a LIM?
\end{question}
The question is open even for discrete case \cite{Lau.survey}. Here we present a theorem which asserts that a weak version of the fixed point property ($F_*$) holds if $LUC(S)$ has a LIM.
 
\begin{prop}
Let $S$ be a semitopological semigroup. If $LUC(S)$ has a LIM, then $S$ has the following fixed point property.
\begin{description}
\item[($F_{*n}$)]\label{F*n} Let $\Ss = \{T_s:\; s\in S\}$ be a norm nonexpansive representation of $S$ on a nonempty weak* compact convex set $C$ of the dual space of a Banach space $E$. If $C$ has norm normal structure and the mapping $(s,x)\mapsto T_s(x)$ from $S\times C$ to $C$ is jointly continuous when $C$ is endowed with the weak* topology of $E^*$, then there is a common fixed point for $S$ in $C$.
\end{description}
\end{prop}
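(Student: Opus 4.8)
The plan is to carry out a minimal-invariant-set argument directly in the weak* topology of $E^*$, in the spirit of the proof of Theorem~\ref{discrete FL}, but substituting the normal-structure hypothesis for the L-embeddedness used there and the left-amenable form of Corollary~\ref{wk* cpt B} for left reversibility. Since $C$ is already weak* compact and convex, no Chebyshev center needs to be constructed; one works inside $C$ itself. The two facts that let the weak* setting mimic the weakly compact setting of Lim's theorem (item (b) of Section~\ref{Intro}, \cite{Lim 74}) are: a weak* compact subset of $E^*$ is norm bounded, and every closed norm ball in $E^*$ is weak* closed, the norm being weak* lower semicontinuous. Hence any set of the form $\{x\in C:\ \|x-y\|\le r\ \text{for all}\ y\in F\}$ is weak* closed, and so weak* compact.

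First I would apply Zorn's lemma to the family of nonempty weak* compact convex $S$-invariant subsets of $C$: this family contains $C$, and a decreasing chain has nonempty weak* compact intersection by the finite intersection property, with convexity and $S$-invariance passing to the intersection. This yields a minimal such set $K$. Inside $K$, a second application of Zorn's lemma produces a minimal nonempty weak* compact $S$-invariant (not necessarily convex) subset $F$. The crucial step is to upgrade $T_s(F)\subseteq F$ to $T_s(F)=F$ for all $s\in S$: the action $(s,x)\mapsto T_s(x)$ restricts to a jointly continuous action on the weak* compact set $F$, so Corollary~\ref{wk* cpt B} --- which, as noted in the remark after Corollary~\ref{dct B}, holds whenever $LUC(S)$ has a LIM --- furnishes a nonempty weak* compact $B'\subseteq F$ with $T_s(B')=B'$ for all $s$; minimality of $F$ forces $B'=F$, whence $T_s(F)=F$.

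Finally I would run the Kirk--Lim normal-structure argument. Suppose $F$ has more than one point, and write $d=\sup\{\|x-y\|:\ x,y\in F\}>0$ for its diameter. As $F$ is a bounded subset of $C$, the norm normal structure of $C$ supplies $x_0\in co(F)\subseteq K$ with $r_0:=\sup\{\|x_0-y\|:\ y\in F\}<d$. Put $M=\{x\in K:\ \|x-y\|\le r_0\ \text{for all}\ y\in F\}$. Then $M$ contains $x_0$ (so is nonempty), is convex, and is weak* compact by the first paragraph; it is $S$-invariant because for $x\in M$ and $y\in F$ one writes $y=T_s y'$ with $y'\in F$ (using $T_s(F)=F$) and gets $\|T_s x-y\|=\|T_s x-T_s y'\|\le\|x-y'\|\le r_0$, while $T_s x\in K$ since $K$ is $S$-invariant. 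But $F\not\subseteq M$, for otherwise $d\le r_0$; hence $M\subsetneq K$, contradicting the minimality of $K$. Therefore $F$ is a singleton, and its unique point is fixed by every $T_s$, i.e.\ a common fixed point for $S$ in $C$.

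The main obstacle is the passage from Lim's weakly compact theorem to the weak* setting: one cannot invoke item (b) directly, and must instead check that the auxiliary sets $M$ stay weak* closed (precisely the weak* lower semicontinuity of the norm) and that the \emph{left-amenable}, rather than left-reversible, hypothesis still delivers an invariant set with $T_s(F)=F$ via Corollary~\ref{wk* cpt B}. The joint-continuity assumption on $(s,x)\mapsto T_s(x)$ is used exactly at that step and nowhere else; the final contradiction uses only nonexpansiveness and $T_s(F)=F$.
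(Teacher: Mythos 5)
Your proof is correct and follows essentially the same route as the paper's: a minimal weak* compact convex $S$-invariant $K$, a minimal weak* compact $S$-invariant $F$ inside it, the upgrade to $T_s(F)=F$, and the Kirk--Lim normal-structure contradiction using weak* lower semicontinuity of the norm. The only divergence is in how $T_s(F)=F$ is obtained --- you invoke Corollary~\ref{wk* cpt B} (valid for left amenable $S$ by the remark following Corollary~\ref{dct B}) together with minimality of $F$, whereas the paper appeals directly to the left invariant mean on $LUC(S)$ via the standard argument of Lau's 1973 theorem; both rest on the same hypothesis and are equally legitimate.
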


\begin{proof}
Let $K$ be a minimal weak* closed convex $S$-invariant subset of $C$ and $F$ be a nonempty minimal weak* closed $S$-invariant subset of $K$. Take a $y\in F$. Since the action of $S$ on $C$ is jointly weak* continuous, for each $f\in C(F)$ the mapping $s\mapsto f(sy)$ belongs to $LUC(S)$, where $F$ is equipped with the weak* topology of $E^*$. Using a left invariant mean on $LUC(S)$ and the standard argument of \cite[Theorem~4.1]{Lau73}, one sees that $T_sF=F$ holds for each $s\in S$. Then the norm normal structure and the norm nonexpansiveness implies that $F$ must be a singleton. So the single point in $F$ is a common fixed point for $S$.
\end{proof}

Using our Lemma~\ref{T(B)=B} combined with Lemma~5.1 and 5.2 of \cite{L-T}, we have the following result related to Question~\ref{Q F*}, which extends \cite[Theorem~5.3]{L-T}.

\begin{thm}
Let $S$ be a semitopological semigroup. If $S$ is left reversible or left amenable then the following fixed point property holds.
\begin{description}
\item[($F_{*s}$)]  Whenever $\Ss = \{T_s:\; s\in S\}$ is a norm nonexpansive representation of $S$ on a nonempty norm separable weak* compact convex set $C$ of the dual space of a Banach space $E$ and the mapping $(s,x)\mapsto T_s(x)$ from $S\times C$ to $C$ is jointly continuous when $C$ is endowed with the weak* topology of $E^*$, then there is a common fixed point for $S$ in $C$.
\end{description}
\end{thm}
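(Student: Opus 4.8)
The plan is to reduce the theorem to the existence of a nonempty invariant subset of $C$ and then to extract a common fixed point by exploiting the norm separability of $C$, which forces the weak* topology on $C$ to be metrizable.

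\emph{Stage one: producing an invariant set.} I would treat the two hypotheses on $S$ separately. If $S$ is left reversible, then since $C$ is weak* compact and the action $(s,x)\mapsto T_s(x)$ is jointly continuous for the weak* topology of $E^*$, Corollary~\ref{wk* cpt B} (itself a consequence of Lemma~\ref{T(B)=B}) furnishes a nonempty weak* compact subset $B\subseteq C$ with $T_s(B)=B$ for every $s\in S$. If instead $S$ is only left amenable, the same conclusion is available because, as noted after Corollary~\ref{wk* cpt B}, the invariant-set statement remains valid for left amenable semitopological semigroups by \cite[Lemma~5.1]{L-T} (recall that left amenability does not force left reversibility in the non-discrete case, so this really is a separate input). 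Thus in either case I may assume from the outset that such a $B$ exists, and moreover that the same device applies to any weak* compact $S$-invariant subset of $C$ on which the (restricted) action is jointly continuous.

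\emph{Stage two: extracting a fixed point.} The crucial feature of the hypothesis is that a norm separable weak* compact $C$ is weak* metrizable. Indeed, if $\{g_n\}$ is norm dense in the (norm separable) closed linear span $F_0=\overline{\mathrm{span}}\,C$ and one selects $e_n\in E$ with $\|e_n\|\leq 1$ and $\langle g_n,e_n\rangle\geq \|g_n\|/2$, then a short approximation argument shows that $\{e_n\}$ separates the points of $F_0$, so the injection of the compact space $(C,\text{weak*})$ into the metric space induced by $\{e_n\}$ is a homeomorphism. This metrizability is exactly what makes the sequential (asymptotic-center) machinery of \cite[Lemma~5.2]{L-T} applicable. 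Concretely, I would choose by Zorn's Lemma a minimal nonempty weak* compact convex $S$-invariant subset $K\subseteq C$ and then a minimal nonempty weak* compact $S$-invariant subset $F\subseteq K$; applying the invariant-set lemma of Stage one to $F$ itself yields a nonempty subset of $F$ on which every $T_s$ is onto, whence by minimality $T_s(F)=F$ for all $s$. Combining this identity with norm nonexpansiveness and the metrizability of $K$, one runs the Lim--Goebel--Karlovitz type diametral argument to conclude that $F$ cannot contain two distinct points, so $F$ is a singleton whose unique point is a common fixed point for $S$. This is precisely the argument of \cite[Theorem~5.3]{L-T}, which the present statement extends by supplying the invariant set through Lemma~\ref{T(B)=B} in the general left reversible semitopological setting.

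\emph{Main obstacle.} The delicate part is entirely Stage two. In contrast with the L-embedded situation of Theorem~\ref{discrete FL}, a weak* compact convex set is in general not L-embedded, so the Chebyshev center of Lemma~\ref{Chebyshev} is unavailable and the nonexpansive fixed point property can genuinely fail; norm separability is the substitute that keeps the minimal-invariant-set argument alive by rendering $C$ weak* metrizable. The real work is to verify that this sequential argument, hitherto carried out in \cite{L-T} for the discrete or left amenable case, survives once Lemma~\ref{T(B)=B} provides the invariant set for an arbitrary left reversible semitopological semigroup, and to confirm that no countability hypothesis on $S$ (beyond what the minimal-set extraction already supplies, as in the proof of Theorem~\ref{discrete FL}) is needed.
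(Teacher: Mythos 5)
Your proposal is correct and follows essentially the same route as the paper, which proves this theorem precisely by combining Lemma~\ref{T(B)=B} (giving the invariant set $B$ with $T_s(B)=B$ in the left reversible case, with \cite[Lemma~5.1]{L-T} covering the left amenable case) with the minimal-invariant-set and asymptotic-center machinery of \cite[Lemma~5.2]{L-T} and \cite[Theorem~5.3]{L-T}, made available by the weak* metrizability of the norm separable set $C$. Your two-stage decomposition and the metrizability argument fill in exactly the details the paper leaves to the cited references.
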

\qed

\begin{question}
Let $S$ be a (discrete) semigroup. If the fpp ($F_{*s}$) holds, does $WAP(S)$ have a LIM? We also do not know whether the existence of a LIM on $WAP(S)$ is sufficient to ensure the fpp ($F_{*s}$).
\end{question}
Here we give some partial affirmative results. We first note that $S$ has the fpp ($F_{*s}$) if and only if its unitization $S^e$ does. So any consequence of ($F_{*s}$) for $S^e$ is also a consequence of ($F_{*s}$) for $S$. The next lemma may already be well-known. But we still include a proof for completeness.

\begin{lemma}\label{LIM Se to S}
For a semigroup $S$, $AP(S^e)$ has a LIM if and only if $AP(S)$ does. Similarly, $WAP(S^e)$ has a LIM if and only if $WAP(S)$ does.
\end{lemma}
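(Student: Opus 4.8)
The plan is to exploit the Banach-space identification $C_b(S^e)\cong C_b(S)\oplus\mathbb{C}$ given by $f\mapsto(f|_S,f(e))$, under which the sup-norm becomes the max-norm, and to track how left translations and almost periodicity transfer across this splitting. Since the $AP$-equivalence and the $WAP$-equivalence have identical proofs with ``relatively norm compact'' replaced by ``relatively weakly compact,'' I would treat both at once, writing $\mathcal{A}$ for either $AP$ or $WAP$. First I would record the translation formulas: because $e$ is a two-sided identity, $\ell_e f=f$, while for $s\in S$ and $t\in S$ one has $st\in S$ and $(\ell_s f)(t)=f(st)$, and $(\ell_s f)(e)=f(se)=f(s)$. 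Thus, writing $g=f|_S$, the translate $\ell_s f$ corresponds to the pair $(\ell_s g,\,g(s))$, so the left orbit of $f$ in $C_b(S^e)$ corresponds to $\{(\ell_s g,g(s)):s\in S\}\cup\{(g,f(e))\}$. Since relative (weak) compactness in a finite direct sum is equivalent to relative (weak) compactness of each coordinate projection, and the $\mathbb{C}$-coordinate is automatically relatively compact (being bounded), I would conclude that $f\in\mathcal{A}(S^e)$ if and only if $g=f|_S\in\mathcal{A}(S)$, with the value $f(e)$ unconstrained. In particular restriction maps $\mathcal{A}(S^e)$ onto $\mathcal{A}(S)$, and every $g\in\mathcal{A}(S)$ admits extensions lying in $\mathcal{A}(S^e)$.

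The crucial auxiliary fact is that the indicator $\chi_{\{e\}}$ of $\{e\}$ lies in $\mathcal{A}(S^e)$ and is annihilated by every LIM. Indeed, for $s\in S$ and any $t\in S^e$ one has $st\in S$, so $\chi_{\{e\}}(st)=0$; hence $\ell_s\chi_{\{e\}}=0$ for all $s\in S$, the orbit is finite, and left invariance forces $m(\chi_{\{e\}})=m(\ell_s\chi_{\{e\}})=m(0)=0$ for any LIM $m$ on $\mathcal{A}(S^e)$. Consequently $m$ vanishes on every element of $\mathcal{A}(S^e)$ supported on $\{e\}$.

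With these facts in place both implications follow. For the direction ``$\mathcal{A}(S)$ has a LIM $n$ $\Rightarrow$ $\mathcal{A}(S^e)$ has one,'' I would set $m(f)=n(f|_S)$; this is well defined by the restriction result, satisfies $m(1)=n(1)=1$ and $\|m\|\le 1$, and is left invariant because $(\ell_s f)|_S=\ell_s(f|_S)$ for $s\in S$ while $\ell_e f=f$. For the converse, given a LIM $m$ on $\mathcal{A}(S^e)$ I would define $n(g)=m(Eg)$, where $Eg\in\mathcal{A}(S^e)$ is the extension with $(Eg)(e)=0$. The annihilation of $\chi_{\{e\}}$ makes $n$ independent of the chosen extension, and since $Eg=1_{S^e}-\chi_{\{e\}}$ when $g=1$ one gets $n(1)=1$, together with $\|n\|\le 1$ (because $\|Eg\|=\|g\|_\infty$), so $n$ is a mean.

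The hard part will be the left invariance of $n$, which is the only point where the auxiliary fact is really needed. The two extensions $\ell_s(Eg)$ and $E(\ell_s g)$ agree on $S$ but differ at $e$: by the translation formula their difference is exactly $g(s)\,\chi_{\{e\}}$. Hence $m\big(\ell_s(Eg)\big)=m\big(E(\ell_s g)\big)+g(s)\,m(\chi_{\{e\}})=n(\ell_s g)$, while the left invariance of $m$ gives $m\big(\ell_s(Eg)\big)=m(Eg)=n(g)$; comparing the two yields $n(\ell_s g)=n(g)$ for all $s\in S$, so $n$ is a LIM on $\mathcal{A}(S)$. Replacing ``relatively norm compact'' by ``relatively weakly compact'' throughout (and noting that restriction, being bounded linear, is weak--weak continuous) gives the $WAP$ statement verbatim.
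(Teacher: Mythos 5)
Your proof is correct and follows essentially the same route as the paper's: you extend by zero and restrict, use the identity $\ell_s(Eg)=E(\ell_s g)+g(s)\chi_{\{e\}}$ (the paper's $(\ell_a f)^e=\ell_a f^e-f(a)\delta_e$), and kill the $\chi_{\{e\}}$ term by observing that $\ell_s\chi_{\{e\}}=0$ forces any LIM to annihilate it. The only difference is cosmetic: your explicit splitting $C_b(S^e)\cong C_b(S)\oplus\mathbb{C}$ makes the preservation of (weak) almost periodicity under extension and restriction slightly more transparent than the paper's direct verification.
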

\begin{proof}
For $f\in \ell^\infty(S)$ we define $f^e\in \ell^\infty(S^e)$ by $f^e(s)=f(s)$ if $s\in S$ and $f^e(e)=0$. We have, for each $a\in S$,
\[  \ell_af^e(s) =\begin{cases} \ell_af(s) & \text{ if $s\in S$,}\\
                                       f(a)  &\text{ if $s=e$.}
\end{cases}  \]
Therefore, $f^e\in AP(S^e)$ if $f\in AP(S)$; and $f^e\in WAP(S^e)$ if $f\in WAP(S)$. Further, we have $(\ell_af)^e = \ell_af^e - f(a)\delta_e$ for each $a\in S$. Now, if $\mathbf m$ is a left invariant mean on $AP(S^e)$, we let $\tilde{\mathbf m} \in AP(S)^*$ be defined by $\langle f, \tilde{\mathbf m}\rangle = \langle f^e, \mathbf m\rangle$ ($f\in AP(S)$). Clearly, $\tilde{\mathbf m}$ is a mean on $AP(S)$. Moreover, since $\ell_a\delta_e = 0$ for $a\in S$, we have $\langle \delta_e, \mathbf m\rangle =\langle \ell_a\delta_e, \mathbf m\rangle = 0$ and hence 
\begin{align*} \langle \ell_af, \tilde{\mathbf m}\rangle &= \langle (\ell_af)^e, \mathbf m\rangle =\langle \ell_af^e, \mathbf m\rangle - f(a)\langle \delta_e, \mathbf m\rangle\\
 &=\langle \ell_af^e, \mathbf m\rangle = \langle f^e, \mathbf m\rangle = \langle f, \Tilde{\mathbf m}\rangle. 
\end{align*}
So $\tilde{\mathbf m}$ is a LIM on $AP(S)$. Similarly, if $\mathbf m$ is a LIM on $WAP(S^e)$, then $\tilde{\mathbf m}$ is a LIM on $WAP(S)$.

For the converse, we denote $\tilde f = f|_S$ if $f\in \ell^\infty (S^e)$. Then $\tilde f \in AP(S)$ (resp. $WAP(S)$) if $f\in AP(S^e)$ (resp. $WAP(S^e)$), and $\ell_a\tilde f = (\ell_a f)\tilde{} $ for each $a\in S$. If $\mu\in AP(S)^*$ is a LIM on $AP(S)$, then let $\mathbf m \in AP(S^e)^*$ be defined by $\langle f, \mathbf m\rangle = \langle \tilde f, \mu\rangle$ ($f\in AP(S^e)$). Then $\mathbf m$ is a mean on $AP(S^e)$ and
\[
  \langle \ell_af, \mathbf m\rangle = \langle (\ell_af)\tilde{}, \mu\rangle =\langle \ell_a\tilde f, \mu\rangle = \langle \tilde f, \mu\rangle = \langle f, \mathbf m\rangle \quad (a\in S^e).  \]
Hence $\mathbf m$ is a LIM on $AP(S^e)$. Similarly, if $\mu$ is a LIM on $WAP(S)$, then $\mathbf m$ is a LIM on $WAP(S^e)$.
\end{proof}

\begin{remark}\label{orbit}
Note that when $X=AP(S)$ or $WAP(S)$ or $LUC(S)$, then $X$ is introverted, that is if $m\in X^*$ and $f\in X$ then the function $h(s) = m(\ell_s f)$ ($s\in S$) is also in $X$. In this case $X$ has a LIM if and only if $\overline{co}^p\{r_s f: s\in S\}$, the pointwise closure of the right orbit of $f$, contains a constant function for every $f\in X$ (see \cite[Theorem~3.1]{Lau-Proc} or \cite{Gran-Lau}).
\end{remark}

\begin{prop}\label{FsLIM} Suppose that $S$ has the fixed point property $(F_{*s})$. Then
\begin{enumerate}
\item $AP(S)$ has a LIM;\label{AP}
\item $WAP(S)$ has a LIM if $S$ has a countable left ideal.\label{WAP}
\end{enumerate}
\end{prop}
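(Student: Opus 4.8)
The plan is to deduce the existence of an invariant mean from the introversion criterion recorded in Remark~\ref{orbit}: since $AP(S)$ and $WAP(S)$ are left introverted, each has a LIM as soon as, for \emph{every} $f$ in the space, the pointwise-closed convex hull $\overline{co}^{\,p}\{r_sf:\,s\in S\}$ of its right orbit contains a constant function. So for a fixed $f$ I shall manufacture such a constant as a common fixed point of the right-translation representation, feeding a suitable convex set into $(F_{*s})$. Two preliminary reductions make the endgame clean. First, because $(F_{*s})$ holds for $S$ if and only if it holds for $S^e$, and by Lemma~\ref{LIM Se to S} $AP(S)$ (resp. $WAP(S)$) has a LIM iff $AP(S^e)$ (resp. $WAP(S^e)$) does, I may assume $S$ has an identity $e$. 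Secondly I take $S$ discrete, so that in $(F_{*s})$ joint continuity of $(s,x)\mapsto T_s(x)$ reduces to separate weak*-continuity of each $T_s$. The representation will be $T_s=r_s$, the right translation; note $r_sr_t=r_{st}$, so $s\mapsto r_s$ is a genuine (homomorphism) representation of $S^e$, and each $r_s$ is a linear contraction of the ambient function space.

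For part (a), fix $f\in AP(S)$. The right orbit $\{r_sf:\,s\in S\}$ is relatively norm compact (almost periodicity is two-sided), so its norm-closed convex hull $K_f=\overline{co}^{\,\|\cdot\|}\{r_sf\}$ is norm compact by Mazur's theorem. Viewing $AP(S)\subseteq AP(S)^{**}=(AP(S)^*)^*$, the set $K_f$ is weak* compact and convex, and it is norm separable because it is norm compact; thus it is an admissible $C$ for $(F_{*s})$ with $E=AP(S)^*$. Since $r_s(r_tf)=r_{st}f$, each $r_s$ maps the orbit, hence $K_f$, into itself and is norm nonexpansive; on the norm-compact set $K_f$ the weak* topology coincides with the norm topology, so each $r_s$ is weak*-continuous there. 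Applying $(F_{*s})$ yields $g\in K_f$ with $r_sg=g$ for all $s$. Evaluating $g(ts)=g(t)$ at $t=e$ gives $g(s)=g(e)$ for all $s$, so $g$ is constant; and $g\in K_f\subseteq\overline{co}^{\,p}\{r_sf\}$. As $f$ was arbitrary, Remark~\ref{orbit} gives a LIM on $AP(S)=AP(S^e)$, proving (a). (Here norm compactness does all the work, so (a) needs no countability and, using $AP(S)\subseteq RUC(S)$, even survives the general semitopological setting.)

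For part (b) the only difference is that the right orbit of $f\in WAP(S)$ is merely relatively weakly compact (weak almost periodicity is again two-sided), so its convex hull is weakly compact but typically not norm separable. This is exactly where the countable left ideal $L$ enters (it remains a left ideal of $S^e$). Because $L$ is a left ideal, for $s\in S^e$ and $l\in L$ one has $r_s(r_lf)=r_{sl}f$ with $sl\in L$; hence the smaller set $\{r_lf:\,l\in L\}$ is still invariant under every $r_s$. Set $K_f^{L}=\overline{co}^{\,\text{wk}}\{r_lf:\,l\in L\}$. Being the weakly closed convex hull of a relatively weakly compact set, $K_f^{L}$ is weakly compact by the Krein--\v{S}mulian theorem; being (by Mazur) the norm-closed convex hull of the \emph{countable} set $\{r_lf\}$, it lies in the separable norm-closed linear span of that set and is therefore norm separable. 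Thus $K_f^{L}$ is an admissible $C$ for $(F_{*s})$ inside $WAP(S)\subseteq WAP(S)^{**}$, the contractions $r_s$ preserve it and are weakly (hence weak*-) continuous as restrictions of bounded linear maps. $(F_{*s})$ produces $g\in K_f^{L}$ with $r_sg=g$, again constant by the identity, and $g\in K_f^{L}\subseteq\overline{co}^{\,p}\{r_sf:\,s\in S\}$. Remark~\ref{orbit} then delivers a LIM on $WAP(S)$.

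The crux throughout is the norm-separability demanded by $(F_{*s})$: for $AP$ it comes free from norm compactness of orbits, whereas for $WAP$ weak compactness yields no separability, and the hypothesis of a countable left ideal is precisely the device that restores it, via the ideal property $SL\subseteq L$ which keeps the countably generated set $r$-invariant. The remaining points---passing to $S^e$ so that a right-fixed function is forced to be constant, and using $r_s$ rather than $\ell_s$ so that composition respects the semigroup multiplication---are the routine but essential bookkeeping.
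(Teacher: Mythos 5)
Your argument is essentially the paper's own proof: after the same reduction to the unital case via Lemma~\ref{LIM Se to S}, you apply $(F_{*s})$ to the right-translation representation on the norm-closed convex hull of the right orbit (norm compact, hence norm separable and weak* compact, for $AP$) and on the weakly closed convex hull of the orbit under the countable left ideal (weakly compact and norm separable, for $WAP$), obtain a right-translation-invariant, hence constant, function, and conclude by the introversion criterion of Remark~\ref{orbit}. The extra details you supply (Krein--\v{S}mulian, separability via Mazur, the explicit evaluation at $e$ to get constancy, and the check that the left-ideal orbit is $r_s$-invariant) are correct and only make explicit what the paper leaves implicit.
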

\begin{proof}
From Lemma~\ref{LIM Se to S} and the remark before it, we may assume $S$ is unital.

For (a): Let $f\in AP(S)$ and let $K_f = \overline{co}(r_sf:\;s\in S)$ be the norm closure of the convex hull of the right orbit $\mathcal{RO}f$ of $f$. Then $K_f$ is a compact and hence weak* compact convex subset of $\ell^\infty(S)= \ell^1(S)^*$. As a norm compact set $K_f$ is norm separable. Take the natural representation of $S$ on $K_f$ defined by the right translation mapping $T_sg = r_sg$ ($g\in K_f$). Then $T_s$ is norm nonexpansive and weak* continuous in $s\in S$ (note that the weak* topology coincides with the norm topology on $K_f$. By ($F_{*s}$), $K_f$ has a fixed point $g\in K_f$ such that $r_sg =g$ ($s\in S$). This $g$ must be a constant function. Therefore, each $K_f$ ($f\in AP(S)$) contains a constant function. Thus $AP(S)$ has a LIM by Remark~\ref{orbit}.

For (b): Let $S_0$ be a countable left ideal of $S$. Then for each $f\in WAP(S)$, $K_f^0 = \overline{co}(r_sf:\; s\in S_0)$ is a weakly compact (so also weak* compact), norm separable convex subset of $\ell^\infty(S)$ which is invariant under right translations. Still we consider the right translation representation on $K_f^0$. The representation is weakly continuous and hence is weak* continuous on $K_f^0$ since the two topologies coincides on $K_f^0$. From ($F_{*s}$), there is a fixed point for this representation in $K_f^0$ which must be a constant. So $K_f \supset K_f^0$ contains a constant function for each $f\in WAP(S)$. Consequently, $WAP(S)$ has a left invariant mean as in (a).

\end{proof}

\begin{question}
Does the partially bicyclic semigroup $S_2 = \la e, a, b, c: ab=ac=e\ra$ has the fpp $(F_{*s})$?
\end{question}

 It is known that $S_2
 $, 
is not left reversible, but $WAP(S_2)$ has a left invariant mean \cite{L-Z}. If the answer to the above question is yes, then $S$ having $(F_{*s})$ is not equivalent to $S$ being left reversible; if the answer is no, then the converse of Proposition~\ref{FsLIM}~(b) does not hold even for countable semigroup $S$.

Our Theorem~\ref{WAP MLIM} improves \cite[Theorem~3.8]{L-Z} by removing the separability condition on $S$ assumed there. We wonder if the same thing can be done to \cite[Theorem~3.4]{L-Z}. Precisely, we raise
\begin{question}
Does the following fpp hold if $S$ is a semitopological semigroup and $WAP(S)$ has a LIM? 
\begin{description}
\item[($F$)] every weakly separately continuous, weakly quasi-equicontinuous and $Q$-nonexpansive representation of $S$ on a weakly compact convex subset of a separated locally convex topological vector space $(E,Q)$ has a common fixed point for $S$.
\end{description}
\end{question} 

\begin{question}
What amenability property of a topological group or semigroup may be characterized by the Schauder fixed point property?
\end{question}

\begin{acknowledgement}
The authors are grateful to the referee for his or her very careful reading of the paper, valuable suggestions, and references \cite{B-L} and \cite{D-D-S-T}.
\end{acknowledgement}


\begin{thebibliography}{99}

\bibitem{A}
D. Alspach,
A fixed point free nonexpansive map,
{\it Proc. Amer. Math. Soc.} {82} (1981), 423-424.

\bibitem{BGM}
U. Bader, T. Gelander and N. Monod,
{A fixed point theorem for $L^1$ spaces}, Invent. Math. DOI: 10.1007/s00222-011-0363-2.

\bibitem{BPP}
T. D. Benavides, M. A. Jap\'on Pineda and S. Prus
Weak compactness and fixed point property for affine mappings,
{\it J. Funct. Anal.} {209} (2004), 1-15.

\bibitem{B-J-M}
J. F. Berglund, H. D. Junghenn and P. Milnes,
{Analysis on semigroups}, John Wiley and Sons, New York, 1988.




\bibitem{boyce}
W. M. Boyce,
Commuting functions with no common fixed point, {\it Trans. Amer. Math. Soc.} {137} (1969), 77-92.

\bibitem{Browder}
F. E. Browder,
Non-expansive nonlinear operators in Banach spaces, {\it Proc. Nat. Acad. Sci. U. S. A.}, {54} (1965), 1041-1044.

\bibitem{B-L}
A.V. Buhvalov, G.Ja. Lozanovskii,
On sets closed in measure measure in spaces of measurable functions,
{\it Trans. Moscow Math. Soc.} 2 (1978), 127-148. 

\bibitem{D-D-S-T}
P.G. Dodds, T.K. Dodds, F.A. Sukochev and O.Ye. Tikhonov, 
A non-commutative Yosida-Hewitt theorem and convex sets of measurable operators closed locally in measure,
{\it Positivity} 9 (2005), 457-484. 

\bibitem{D-N}
 J. Duncan and I. Namioka, 
 Amenability of inverse semigroups and their semigroup algebras,
{\it Proc. Roy. Soc. Edinburgh Sect A} {80} (1978), 309-321.


\bibitem{G-P}
T. Giordano and V. Pestov, Some extremely amenable groups  
related to operator algebras and ergodic theory, {\it J. Inst. Math. 
Jussieu} {6} (2007), 279--315.

\bibitem{Gran_ELA}
E. Granirer, Extremely amenable semigroups, {\it Math. Scand.} {17} (1965), 177--197.

\bibitem{Gran-Lau}
E. Granirer and A. T.-M. Lau, Invariant means on locally compact  
groups, {\it Illinois J. Math.} {15} (1971), 249--257.

\bibitem{G-K}
K. Goebel and W.A. Kirk, Topics in metric fixed point theory. Cambridge Studies in Advanced Mathematics, 28. Cambridge University Press, Cambridge, 1990.

\bibitem{HWW}
P. Harmand, D. Werner and W. Werner,
M-ideals in Banach spaces and Banach algebras, Lecture Notes in Math., 1547, Springer-Verlag, 1993.

\bibitem{HR}
E. Hewitt and K. A. Ross, Abstract Harmonic Analysis I, Springer-Verlag, New York,  1963.

\bibitem{HS}
N. Hindman and D. Strauss,
Algebra in the Stone-\v Cech compactification, Theory and applications, {de Gruyter Expositions in Mathematics} 27, Walter de Gruyter \& Co., Berlin, 1998.

\bibitem{HL}
R. D. Holmes and A. T.-M. Lau,
Nonexpansive actions of topological semigroups and fixed points, {\it J. Lond. Math. Soc.} {5} (1972), 330-336.

\bibitem{Hsu}
R. Hsu,
Topics on weakly almost periodic functions, {Ph.D. Thesis}, SUNY at Buffalo, 1985.

\bibitem{Jap 02}
M. A. Jap\'on Pineda, Some fixed point results on L-embedded Banach spaces, {\it J. Math. Anal. Appl.} 272 (2002), 381--391.

\bibitem{K-N}
J.\ L.\ Kelley and I.\ Namioka, Linear topological spaces, the University series in higher Mathematics D, Van Nostrand Co., Princeton, NJ 1963.

\bibitem{Kate}
M. Katetov, 
A Theorem on mappings. {\it Comment. Math. Univ. Carolina} {8}(1967), 431-433.


\bibitem{Lau.survey}
  A. \ T.-M.\ Lau, 
  Amenability and fixed point property for semigroup of Nonexpansive mappings, {Fixed point theory and applications} (ed. \ M.\ A.\ Thera and J.\ B.\ Baillon), Pitman Research Notes in Mathematical Series, {252} (1991), 303-313.

\bibitem{Lau90} 
 A.\ T.-M.\ Lau,   Amenability of  semigroups,   {The analytic and topological theory of semigroups} (ed.\  K.\ H.\ Hoffmann,  J.\ D.\ Lawson, and J.\ S.\ Pym), de Gruyter, Berlin, 1990, 313-334.
 
 \bibitem{Lau85}
 A.\ T.-M.\ Lau, Semigroup of nonexpansive mappings on a Hilbert space, J. Math. Anal. Appl. 105 (1985), 514--522.

\bibitem{Lau76} 
 A.\ T.-M.\ Lau,   Some fixed point theorems and W*-algebras, {\it Fixed point theory and applications} (ed. \ S.\ Swaminathan) Academic Press, (1976), 121-129.

\bibitem{Lau-Proc}
  A.\ T.-M.\ Lau,  Semigroup of operators on dual Banach spaces. {\it Proc. Amer. Math. Soc.} 54 (1976) 393-396.group of operators on dual Banach spaces. {\it Proc. Amer. Math. Soc.} 54 (1976) 393-396.

\bibitem{Lau73}
  A.\ T.-M.\ Lau, 
  Invariant means on almost periodic functions and fixed point properties, {\it Rocky Mountain J of Math.} {3} (1973), 69-76.

\bibitem{Lau-Trans 148}
  A.\ T.-M.\ Lau, Topological semigroups with invariant means in the convex hull of multiplicative means, {\it Trans. Amer. Math. Soc.} 148 (1970) 69-84.groups with invariant means in the convex hull of multiplicative means, {\it Trans. Amer. Math. Soc.} 148 (1970) 69-84.

\bibitem{Lau-Trans 152}
  A.\ T.-M.\ Lau, Functional analytic properties of topological semigroups and n-extreme amenability, {\it Trans. Amer. Math. Soc.} 152 (1970) 431-439.groups and n-extreme amenability, {\it Trans. Amer. Math. Soc.} 152 (1970) 431-439.

\bibitem{L-M-T}
A.\ T.-M.\ Lau, H. Miyake and W. Takahashi,
Approximation of fixed points for amenable semigroups of nonexpansive mappings in Banach spaces, {\it Nonlinear Analysis} {67} (2007), 1211-1225.

\bibitem{L-T}
  A.\ T.-M.\ Lau and \ W. \ Takahashi, 
Invariant means and fixed point properties for nonexpansive representations of topological semigroups, {Topological Methods in Nonlinear Analysis}, {\it J. Juliusz Schauder Center} {5} (1995), 39-57.


\bibitem{L-Z}
A.\ T.-M.\ Lau and \ Y,\ Zhang,
Fixed point properties of semigroups of non-expansive mappings, {\it J. Funct. Anal.} {254} (2008), 2534--2554.

\bibitem{Lim 80}
T. C. Lim,
Asymptotic centers and nonexpansive mappings in conjugate Banach spaces, {\it Pacific J. Math.} 90 (1980), 135--143.

\bibitem{Lim 74}
T. C. Lim,
Characterizations of normal structure, {\it Proc. Amer. Math. Soc.} {43} (1974), 313-319.

\bibitem{Lim Pac}
T. C. Lim,
A fixed point theorem for families of nonexpansive mappings, {\it Pacific J. Math.} 53 (1974), 484--493.

\bibitem{Lin}
P. K. Lin, There is an equivalent norm on $\ell^1$ that has the fixed point property, {\it Nonlinear Anal.} 68 (2008), 2303--2308.

\bibitem{Losert}
V. Losert,
The derivation problem for group algebras,  {\it Ann. Math.} 168 (2008), 221--246.

\bibitem{M2}
T. Mitchell, 
Talk in Richmond, Virginia Conference on Analysis on Semigroups, 1984.

\bibitem{Mitch_LUC}
T. Mitchell, Topological semigroups and fixed points, {\it Illinois J. Math.} 14 (1970), 630--641.

\bibitem{Mitch 70}
T. Mitchell, Fixed points of reversible semigroups of nonexpansive mappings,  {\it Kodal Math. Seminar Report} 2 (1970), 322--323.

\bibitem{Mitch_ELA}
T. Mitchell, Fixed points and multiplicative left invariant means,  {\it Trans. Amer. Math. Soc.} 122 (1966), 195--202.


\bibitem{Pat_groupoids}
\ A.\ L.\ T.\ Paterson,
Groupoids inverse semigroups and their operator algebras, Birkhauser, Boston, 1998.

\bibitem{Pat}
\ A.\ L.\ T.\ Paterson,
Amenability, American Mathematical Society,
Providence, 1988.\vskip 3mm

\bibitem{Pier} J. P. Pier, Amenable locally compact groups, John Wiley \& Sons, New York, 1984.

\bibitem{Ren}
\ J. \ Renault,
A groupoid approach to C*-algebras, Lecture Notes in Math. 793, New York, 1980.



\end{thebibliography}
\end{document}